\DeclareFontFamily{U}{mathb}{\hyphenchar\font45}
\DeclareFontShape{U}{mathb}{m}{n}{
      <5> <6> <7> <8> <9> <10> gen * mathb
      <10.95> mathb10 <12> <14.4> <17.28> <20.74> <24.88> mathb12
}{}
\DeclareSymbolFont{mathb}{U}{mathb}{m}{n}
\DeclareMathSymbol{\llcurly}{3}{mathb}{"CE}
\DeclareMathSymbol{\ggcurly}{3}{mathb}{"CF}
\DeclareFontFamily{U}{matha}{\hyphenchar\font45}
\DeclareFontShape{U}{matha}{m}{n}{
      <5> <6> <7> <8> <9> <10> gen * matha
      <10.95> matha10 <12> <14.4> <17.28> <20.74> <24.88> matha12
      }{}
\DeclareSymbolFont{matha}{U}{matha}{m}{n}
\DeclareMathSymbol{\curlywedge} {2}{matha}{"4E}
\DeclareMathSymbol{\curlyvee} {2}{matha}{"4F}
\DeclareMathOperator{\FIN}{FIN}
\newtheorem{thm}{Theorem}[section]
\newtheorem{prop}[thm]{Proposition}
\newtheorem{lem}[thm]{Lemma}
\newtheorem{cor}[thm]{Corollary}
\newtheorem*{thmMain}{Theorem \ref{thm.3.9}}
\newtheorem*{thmmainTRS}{Theorem \ref{maintheorem}}
\theoremstyle{remark}
\newtheorem{rem}[thm]{Remark}
\theoremstyle{definition}
\newtheorem{defn}[thm]{Definition}
\newtheorem{example}[thm]{Example}
\newtheorem{question}[thm]{Question}
\theoremstyle{remark}
\newcommand{\fin}{\mathrm{fin}}
\newcommand{\al}{\alpha}
\newcommand{\om}{\omega}
\newcommand{\sse}{\subseteq}
\DeclareMathOperator{\dom}{dom}
\DeclareMathOperator{\depth}{depth}
\newcommand{\bP}{\mathbb{P}}
\newcommand{\noprint}[1]{\relax}
\title{Tukey-Idempotency and Strong p-points}
\author{Tom Benhamou, Natasha Dobrinen, and Tan \"{O}zalp}
\subjclass[2020]{03E05, 03E04, 03E02, 05D10, 05C55, 06A06, 06A07}
\keywords{Ultrafilter, Tukey order, cofinal types, strong p-point, Canjar ultrafilter, p-point, rapid ultrafilter, Mathias forcing, forcing, Tukey reduction, topological Ramsey spaces}
\thanks{The first author is supported by the NSF under Grant
No. DMS-2346680}
\thanks{The second author is supported by National Science Foundation Grant DMS-2300896}
\address[Benhamou]{Department of Mathematics, Rutgers University, New Brunswick, NJ, USA}
\email{tom.benhamou@rutgers.edu}
\address[Dobrinen]{Department of Mathematics, University of Notre Dame, Notre Dame, IN 46556, USA}
\email{ndobrine@nd.edu}
\address[\"{O}zalp]{Department of Mathematics, University of Notre Dame, Notre Dame, IN 46556, USA}
\email{aozalp@nd.edu}
\date{}
\begin{document}
\begin{abstract}
    We characterize strong $p$-point ultrafilters by showing that they are exactly those $p$-points that are not Tukey above $(\omega^\omega,\leq)$; or equivalently, those $p$-points that are not Tukey-idempotent. Moreover, we show that there are no Canjar ultrafilters on measurable cardinals. We make use of tools which were motivated by topological Ramsey spaces, developed in \cite{Benhamou/Dobrinen24}, and furthermore, show that ultrafilters arising from most of the known topological Ramsey spaces are Tukey-idempotent. Our results answer questions of Hru\v{s}\'ak and Verner \cite[Question 5.7]{Hrusak/Verner11}, Brook-Taylor \cite[Question 3.6]{{QuestionGeneralized}}, and partially Benhamou and Dobrinen \cite[Question 5.6]{Benhamou/Dobrinen24}.
\end{abstract}
\maketitle
\section{Introduction}
The study of combinatorial classes of ultrafilters on $\omega$ has been a major topic of interest, influencing many areas of mathematics. In this paper, we examine one of those well-studied classes, namely, the class of \textit{strong p-points} introduced by Laflamme in \cite{Laflamme89} and  Canjar in \cite{Canjar} (see Definition \ref{def: canjar}). This class consists of those ultrafilters that do not code a \textit{generic dominating real}, an extremely fast-growing sequence of natural numbers. The simplest codes are described via a fixed function $f:\omega\to\omega$,  by enumerating the set $f'' A$ for all $A$ in the ultrafilter. Both Canjar and Laflamme observed that strong p-points do not admit such enumerations which grow faster than any given function from $\omega$ to $\omega$.  They conjectured that in order to be a strong $p$-point, it suffices to avoid these types of enumerations. A counterexample to their conjecture was first constructed by Blass, Hru\v{s}\'ak and Verner \cite{BlassHrushaVerner}. In this paper, we present a characterization of strong $p$-points which captures the underlying intuition behind the Canjar-Laflamme conjecture. Our characterization allows more complex ways of coding a dominating real than plain enumerations of sets. The full scope of this complexity is naturally described using the \textit{Tukey order} by saying that $(\omega^\omega,\leq)$ is \textit{Tukey-reducible} to the ultrafilter, where $\leq$ refers to the \textit{everywhere domination} order of functions. Indeed, we show that Tukey reductions describe all possible codes of a generic dominating real by an ultrafilter.
Our main theorem is:
\begin{thmMain}
    Let $U$ be an ultrafilter on $\omega$, then:
    $$U\text{ is a strong }p\text{-point}\Longleftrightarrow (U,\supseteq)\not\geq_T(\omega^\omega,\leq).$$
\end{thmMain}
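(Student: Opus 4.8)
The plan is to prove the logically equivalent biconditional
$$U\text{ is \emph{not} a strong }p\text{-point}\iff(U,\supseteq)\geq_T(\omega^\omega,\leq).$$
I will use two pieces of background. First, the combinatorial characterization of strong $p$-points coming from Definition~\ref{def: canjar} and the Canjar--Laflamme analysis: $U$ is a strong $p$-point iff Mathias forcing $M_U$ (conditions $(s,A)$ with $s\in[\omega]^{<\omega}$, $A\in U$, $\max s<\min A$) adds no dominating real, equivalently iff a selection principle holds for every decreasing sequence $\langle A_n:n<\omega\rangle\subseteq U$. Second, the standard reformulation of the Tukey relation: $(U,\supseteq)\geq_T(\omega^\omega,\leq)$ iff there is a monotone cofinal map $\phi\colon(U,\supseteq)\to(\omega^\omega,\leq)$, i.e. $A\supseteq B\Rightarrow\phi(A)\leq\phi(B)$ pointwise and $\{\phi(A):A\in U\}$ is dominating. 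The conceptual dictionary driving the proof, exactly as advertised in the abstract, is that such a $\phi$ is precisely a \emph{code} of a generic dominating real, with the enumeration map $A\mapsto e_A$ (where $e_A$ lists $A$ increasingly) recovering the rapid case and general $\phi$ capturing the more complex codes responsible for the Blass--Hru\v{s}\'ak--Verner phenomenon.

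For the direction $(U,\supseteq)\geq_T(\omega^\omega,\leq)\Rightarrow U$ is not a strong $p$-point, I would fix a monotone cofinal $\phi$ and show $M_U$ adds a dominating real. The engine is monotonicity: as the generic filter refines, its side conditions shrink through $U$, so the values $\phi(A)$ only increase. Concretely, I would build a name $\dot d$ by a fusion that commits the value $\dot d(k)$ using the side condition active when the $k$-th element enters the Mathias real and freezes it thereafter. Given any ground-model $g$, cofinality yields $A_g\in U$ with $\phi(A_g)\geq g$; below $(\emptyset,A_g)$ every committed value is $\phi(B)(k)$ for some $B\subseteq A_g$, whence $\dot d(k)\geq\phi(A_g)(k)\geq g(k)$ for all large $k$. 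Since the sets $\{(s,A):\phi(A)\geq g\}$ are dense, $\dot d$ dominates every $g$ in the ground model. The care here is in setting up the name so that frozen values stay below the relevant $\phi(B)$; this is the less delicate of the two directions.

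The harder direction is $U$ not a strong $p$-point $\Rightarrow(U,\supseteq)\geq_T(\omega^\omega,\leq)$, where I must \emph{construct} the Tukey reduction. Starting from a witness to the failure of the selection principle---a decreasing sequence $\langle A_n\rangle\subseteq U$ (with $\bigcap_nA_n=\emptyset$, giving a rank function $m(x)=\max\{n:x\in A_n\}$) for which no admissible selection lies in $U$---I would define $\phi(B)$, for $B\in U$, to record the \emph{stages of first failure} of $B$ against the $A_n$: for instance, $\phi(B)(k)$ is the least $N$ beyond which $B$ can no longer be captured past level $k$. Monotonicity is immediate, since shrinking $B$ only postpones capture. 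The \textbf{main obstacle} is cofinality: one must show that the failure of the strong $p$-point property forces these functions to be dominating, i.e. that \emph{every} $g\in\omega^\omega$ is overtaken by some $\phi(B)$. This is exactly the step that must account for all codes of a dominating real and not merely enumerations, and it is where I expect to lean on the topological-Ramsey-space machinery of \cite{Benhamou/Dobrinen24}: organizing the finite approximations $B\cap A_k$ along a front and using the combinatorial failure to read a dominating function off $\phi[U]$. Assembling $\phi$ into a genuine monotone cofinal map, and verifying cofinality uniformly, is the crux of the whole argument.
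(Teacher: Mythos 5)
Your high-level architecture (reduce to the Canjar/Mathias-forcing characterization, then prove both directions by building monotone cofinal maps) matches the paper, but both directions have genuine gaps, and you have the difficulty assessment inverted. The direction you call ``less delicate'' --- $(U,\supseteq)\geq_T(\omega^\omega,\leq)$ implies Mathias forcing adds a dominating real --- is in fact the hard one, and your fusion argument breaks at the very first step: there is no well-defined name $\dot d$ that ``commits the value $\dot d(k)$ using the side condition active when the $k$-th element enters.'' Two Mathias conditions $\langle s,A\rangle$ and $\langle s,A'\rangle$ with the same stem are compatible, so they cannot force contradictory values $\phi(A)(k)\neq\phi(A')(k)$; and the generic real $X_G$ itself is not in $U$, so $\phi$ cannot be evaluated on it or on its finite approximations. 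An arbitrary monotone cofinal map carries no continuity, and this is precisely the obstruction the paper's machinery exists to remove: one first reduces to the case that $U$ is a $p$-point (non-$p$-points are never Canjar), then invokes the Dobrinen--Todorcevic theorem (Lemma \ref{lem:continuouscofinalmaps}) to replace $\phi$ by a \emph{continuous} monotone cofinal map, i.e.\ one computable from finite initial segments via a finitary map $\hat f:[\omega]^{<\omega}\to\omega^{<\omega}$; this is packaged as ``canonically Tukey above $(\omega^\omega,\leq^*)$'' (Definition \ref{Tukey*}), and only then can the extension read off a dominating real, by evaluating $\hat f$ along the sets $B_n=t_n\cup X_G\setminus\max(t_n)+1$ (Lemma \ref{lem:notCanjarcharacterization}). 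Your density computation ($D_g=\{\langle s,A\rangle:\phi(A)\geq g\}$ is dense) is correct but idle, because without a coherent name there is nothing for these dense sets to control.

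The converse direction, which you call the crux, is actually the paper's easy half (Proposition \ref{prop: the easy direction}) and needs no Ramsey-space machinery at all --- your instinct to ``lean on the topological-Ramsey-space machinery of \cite{Benhamou/Dobrinen24}'' is misplaced, as \S4 of the paper plays no role in the main theorem. The efficient witness to non-Canjarness is not a decreasing sequence $\langle A_n\rangle\subseteq U$ with a vague capture-rank function, but the Hru\v{s}\'ak--Minami characterization: $U$ is Canjar iff $U^{<\omega}$ is a $P^+$-filter on $\FIN$. Given a decreasing sequence $\langle X_n\rangle\subseteq(U^{<\omega})^+$ with no pseudo-intersection in $(U^{<\omega})^+$, one sets
$f_Z(n)=\min\{\max(s)\mid s\in[Z]^{<\omega}\cap X_n\}$
for $Z\in U$; monotonicity is immediate, and cofinality --- the step you leave open --- follows in three lines: for any $g\in\omega^\omega$ the set $\bigcup_n\{s\in X_n:\max(s)<g(n)\}$ is a pseudo-intersection of the $X_n$'s, hence not $U^{<\omega}$-positive, so some $Y_g\in U$ has $[Y_g]^{<\omega}$ disjoint from it, which forces $f_{Y_g}\geq g$ pointwise. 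So your sketch of this half has the right shape (a least-stage-of-capture minimum, monotone under shrinking) but is pitched on the wrong base set ($\omega$ rather than $\FIN$) and omits exactly the pseudo-intersection argument that makes cofinality work.
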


To state our result more precisely, we first recall the Tukey order \cite{Tukey40}. This concept originated in the study of Moore–Smith convergence of nets in topology. It is defined in greater generality: for two directed posets $(P,\leq_P)$ and $(Q,\leq_Q)$, we write $(P,\leq_P)\leq_T (Q,\leq_Q)$ if there is map $f:Q\rightarrow P$, which is cofinal, namely, $f''B$ is cofinal in $P$ whenever $B$ is cofinal in $Q$. Schmidt \cite{Schmidt55} observed that this is equivalent to having a map $f:P\rightarrow Q$, which is unbounded, namely, $f''\mathcal{A}$ is unbounded in $Q$ whenever $\mathcal{A}$ is unbounded in $P$.
We say that $P$ and $Q$ are {\em Tukey equivalent},
and write
$P\equiv_T Q$,
if $P\leq_T Q$ and $Q\leq_T P$;  the equivalence class $[P]_T$ is called the Tukey type or cofinal type of $P$.

The investigation of the Tukey type of posets of the form $(U,\supseteq)$, where $U$ is an ultrafilter on a countable set, started with the independent work of Isbell \cite{Isbell65} and Juh\'asz \cite{Juhasz66}, who constructed (in ZFC) ultrafilters of maximal Tukey type. More recently, this study has been an important line of research, experiencing a major development from a variety of authors. Examples include \cite{TomCommute, Benhamou/Dobrinen24, Blass/Dobrinen/Raghavan15, DobrinenJSL15,  Dobrinen/Todorcevic11, Milovich08, Ozalp24,  Raghavan/Shelah17,CancinoZaplatal}; and lately on uncountable cardinals as well \cite{BENHAMOU_DOBRINEN_2024,Benhamou_Goldberg_2025,benhamou2025ultrafilterssuccessorcardinalstukey}.

  In this paper, we will only use the characterization of strong $p$-point ultrafilters given by Canjar, namely, the fact that their corresponding Mathias forcing does not add dominating reals. The precise conjecture of Canjar and Laflamme was that being a $p$-point without rapid $RK$-predecessors is equivalent to being a strong $p$-point. Recall that an ultrafilter $U$ is \textit{rapid} if and only if the enumeration function $X\mapsto \pi_X$ (where $\pi_X:\omega\to X$ is the inverse of the transitive collapse of $X$ to $\text{otp}(X)$) is a cofinal map from $(U,\supseteq)$ to $(\omega^\omega,\leq)$, witnessing that $(U,\supseteq)\geq_T(\omega^\omega,\leq)$. Benhamou \cite{TomCommute} showed that
there are consistently $p$-point ultrafilters  which are not rapid but are Tukey above $(\omega^\omega,\leq)$ (e.g., $\alpha$-almost rapid ultrafilters).
This is one way of seeing that the property of being Tukey above $(\omega^\omega,\leq)$ generalizes the property of being $RK$-above a rapid ultrafilter, and in turn, how our main result relates to the Canjar-Laflamme conjecture.  

The property of being a $p$-point which is Tukey above $(\omega^\omega,\leq)$ turned out to be equivalent to \textit{Tukey-idempotency}. We say that an ultrafilter $U$ is \textit{Tukey-idempotent} if $U\cdot U\equiv_T U$, where $U\cdot U$ denotes the Fubini product of $U$ with itself (see, e.g., \cite{BlassThesis}). In \cite{Dobrinen/Todorcevic11}, Dobrinen and Todorcevic proved  that inside the class of $p$-points (see Definition \ref{def: i-p.i.p}), Tukey-idempotent ultrafilters are exactly those $U$ such that $(U,\supseteq)\geq_T (\omega^\omega,\leq)$.
 It is also known that non-$p$-points are always Tukey above  $(\omega^\omega,\leq)$ (see Theorem $4.2$ of \cite{TomCommute}). An unpublished  result of Dobrinen showed that any strong p-point constructed via a certain forcing of Laflamme in \cite{Laflamme89}
is not Tukey idempotent, 
further motivating us to find the following: Tukey-idempotency of a $p$-point is itself yet another characterization of not being a strong $p$-point (see Theorem \ref{thm:main characterization}).

The property of being a $p$-point which is Tukey above $(\omega^\omega,\leq)$ was  generalized in \cite{Benhamou/Dobrinen24}  via the notion of $I$-p.i.p., where $I$ is an ideal on  some countable base set $S$:
\begin{defn}[{\cite{Benhamou/Dobrinen24}}]\label{def: i-p.i.p}
    We say that an ultrafilter $U$ on a countable base set $S$  has the 
    {\em $I$-pseudo-intersection-property}, or just  the
\textit{$I$-p.i.p.}, if for every $\langle X_n\mid n<\omega\rangle\subseteq U$, there is $X\in U$ such that $X\setminus X_n\in I$ for each $n<\omega$. 
\end{defn}
The $I$-p.i.p.\ was motivated by work on ultrafilters forced by topological Ramsey spaces.  This will be made clear in \S \ref{Section: TRS}.
Note that an ultrafilter $U$ on $\om$  is a \textit{$p$-point} exactly when it has the $\mathrm{fin}$-p.i.p., where $\mathrm{fin}$ is the ideal of finite subsets of $\omega$. 
\begin{thm}[{\cite{Benhamou/Dobrinen24}}]\label{Tukeyidempotency}
   Assume that $U$ is an ultrafilter on a countable base set $S$, and suppose $I\subseteq U^*$ is an ideal on the same set $S$. If $U$ has the $I$-p.i.p.\ and $U\geq_T I^\omega$\footnote{Here, $I^\omega:=\prod_{n<\omega}I$ is ordered via pointwise inclusion.}, then $U$ is Tukey-idempotent.
\end{thm}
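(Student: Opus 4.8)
The plan is to prove the nontrivial inequality $U\cdot U\le_T U$; the reverse inequality $U\le_T U\cdot U$ holds for every ultrafilter, witnessed by the monotone cofinal projection sending $W\in U\cdot U$ to its base $\{s\in S:\{t:(s,t)\in W\}\in U\}$. To get $U\cdot U\le_T U$ it suffices to build a monotone map $\phi\colon(U,\supseteq)\to(U\cdot U,\supseteq)$ with cofinal range, since a monotone map with cofinal range is a cofinal map and hence witnesses the reduction. I will use that the sets $W_{A,G}:=\{(s,t):s\in A,\ t\in G(s)\}$, for $A\in U$ and $G\colon A\to U$, are cofinal in $(U\cdot U,\supseteq)$, and that $W_{A,G}\supseteq W_{A',G'}$ whenever $A\supseteq A'$ and $G(s)\supseteq G'(s)$ for all $s\in A'$.

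First I would fix an enumeration $S=\{s_n:n<\omega\}$ and, using $U\ge_T I^\omega$, a monotone cofinal map $\theta\colon(U,\supseteq)\to I^\omega$, writing $\theta(X)=\langle I^X_n:n<\omega\rangle$ with each $I^X_n\in I$. Since $I\subseteq U^*$ we have $S\setminus I^X_n\in U$, hence $X\setminus I^X_n\in U$, and I define
\[
\phi(X):=\{(s_n,t):s_n\in X,\ t\in X\setminus I^X_n\},
\]
so that the base of $\phi(X)$ is $X$ and the fiber over $s_n\in X$ is $X\setminus I^X_n\in U$; thus $\phi(X)\in U\cdot U$. Monotonicity of $\phi$ would follow from that of $\theta$: if $X\supseteq Y$ then the bases satisfy $X\supseteq Y$, while $I^X_n\subseteq I^Y_n$ gives $Y\setminus I^Y_n\subseteq X\setminus I^X_n$ for every $n$, so $\phi(X)\supseteq\phi(Y)$.

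The heart of the argument is cofinality of the range, and this is where both hypotheses enter. Given an arbitrary target $W_{A,G}\in U\cdot U$ (extend $G$ by $G(s)=S$ off $A$), I would first apply the $I$-p.i.p.\ to the sequence $\langle G(s_n):n<\omega\rangle$ and intersect the result with $A$ to obtain $X_0\in U$ with $X_0\subseteq A$ and $J_n:=X_0\setminus G(s_n)\in I$ for all $n$ (using that $I$ is an ideal). Next, since $\langle J_n:n<\omega\rangle\in I^\omega$, cofinality of $\theta$ yields $X_1\in U$ with $I^{X_1}_n\supseteq J_n$ for all $n$. Setting $X:=X_0\cap X_1\in U$, one has $X\subseteq A$, and for each $s_n\in X$, $X\setminus G(s_n)\subseteq J_n\subseteq I^{X_1}_n\subseteq I^X_n$, where the last inclusion uses monotonicity of $\theta$ applied to $X\subseteq X_1$; equivalently $X\setminus I^X_n\subseteq G(s_n)$. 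Hence every $(s_n,t)\in\phi(X)$ satisfies $s_n\in A$ and $t\in G(s_n)$, so $\phi(X)\subseteq W_{A,G}$, which proves that the range of $\phi$ is cofinal and completes the reduction.

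The main obstacle, and the conceptual point of the construction, is the interplay forced by the fibers $G(s_n)$ being arbitrary members of $U$, so that their complements $S\setminus G(s_n)$ are arbitrary members of $U^*$ rather than of the smaller ideal $I$. The $I$-p.i.p.\ is precisely what compresses these complements into $I$ (yielding the errors $J_n\in I$), while $U\ge_T I^\omega$ is precisely what allows a single set $X_1$ to simultaneously absorb all of these $I$-errors along the coordinates of $I^\omega$; the intersection $X=X_0\cap X_1$ together with monotonicity of $\theta$ then glues the two witnesses. Keeping the monotonicity directions consistent (for $\theta$, for $\phi$, and in the final inclusion $I^{X_1}_n\subseteq I^X_n$ coming from $X\subseteq X_1$) is the only delicate bookkeeping.
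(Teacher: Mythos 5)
Your proof is correct, and it is essentially the argument behind the cited result: this paper states Theorem \ref{Tukeyidempotency} without proof (deferring to \cite{Benhamou/Dobrinen24}), where the reduction $U\cdot U\le_T U$ is likewise obtained by factoring through $U\times I^\omega$ — your map $\phi$ is exactly that reduction precomposed with $X\mapsto(X,\theta(X))$, and both hypotheses enter in the same two places (the $I$-p.i.p.\ to compress the fiber errors into $I$, and $U\ge_T I^\omega$ to absorb them). The only step you use without comment is that $U\ge_T I^\omega$ yields a \emph{monotone} cofinal $\theta:(U,\supseteq)\to I^\omega$; this is standard, and in this instance it is immediate because bounded subsets of $I^\omega$ have least upper bounds (pointwise unions stay in $I$ by downward closure), so given an unbounded $\psi:I^\omega\to U$ one may set $\theta(X)=\sup\{e\in I^\omega : \psi(e)\supseteq X\}$.
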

The pool of ultrafilters for which the theorem above can be applied to contains a variety of examples that arise in the literature as well as abstractly via the setup of \textit{topological Ramsey spaces} (TRS) (see \cite{TodorcevicBK10}). There are  standard methods for 
 deriving an ultrafilter on a countable base set from a TRS, first pointed out by Todorcevic, and those ultrafilters satisfy various partition properties (see 
\cite{Dobrinen/NavarroFlores, Mijares07, Trujillo16,Zheng17}).
Benhamou and Dobrinen showed in \cite{Benhamou/Dobrinen24} that ultrafilters arising from high and infinite-dimensional Ellentuck spaces \cite{DobrinenJSL15,DobrinenJML16} as well as ultrafilters arising from the TRS of infinite block sequences are Tukey-idempotent.
In \S \ref{Section: TRS}, we provide general conditions $(*)$ (Definition \ref{defn.star})
under which  Tukey-idempotency is indeed a virtue of ultrafilters arising from  TRS's. 
\begin{thmmainTRS}
Let $\mathcal{R}$ be a topological Ramsey space, 
and suppose there is a proper  ideal $I\subseteq \mathcal{AR}_1$  satisfying  property $(*)$.
Then the generic
filter of 
first approximations
 $U_1$ forced by  $(\mathcal{R},\le)$ is a  Tukey-idempotent ultrafilter.
\end{thmmainTRS}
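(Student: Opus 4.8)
The plan is to derive the conclusion from Theorem \ref{Tukeyidempotency} by checking, for the pair $(U_1,I)$, the three hypotheses of that theorem: that $I\subseteq U_1^*$, that $U_1$ has the $I$-p.i.p., and that $U_1\geq_T I^\omega$. Throughout, $U_1$ is viewed as an ultrafilter on the countable base set $\mathcal{AR}_1$ of first approximations, and $I$ as a proper ideal on $\mathcal{AR}_1$. The first task is to confirm that $U_1$ really is an ultrafilter on $\mathcal{AR}_1$: this is standard for filters of approximations forced by a topological Ramsey space, and follows from the abstract Ellentuck theorem together with the pigeonhole principle for $(\mathcal{R},\le)$ (see \cite{TodorcevicBK10}), which guarantees that for every $A\subseteq\mathcal{AR}_1$ the collection of conditions deciding membership of $A$ in $U_1$ is dense. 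That $I\subseteq U_1^*$, i.e.\ that every set in $I$ is $U_1$-null, then follows from property $(*)$ together with the properness of $I$ by a density argument: no $I$-small set can be forced into the generic filter of first approximations.

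The substantive work lies in the remaining two hypotheses, and this is where property $(*)$ (Definition \ref{defn.star}) does its job. To obtain the $I$-p.i.p., I would take an arbitrary sequence $\langle X_n\mid n<\omega\rangle\subseteq U_1$, pull each $X_n$ back to a condition in the generic filter, and then run a fusion/diagonalization through the abstract Ellentuck theorem to produce a single $X\in U_1$ whose residue $X\setminus X_n$ lies in $I$ for every $n<\omega$; property $(*)$ is precisely the combinatorial input ensuring that the diagonal residues remain $I$-small. To obtain $U_1\geq_T I^\omega$, I would build an explicit cofinal map from $(U_1,\supseteq)$ to $I^\omega$ (equivalently an unbounded map in the reverse direction), reading off from each condition, level by level in the approximation sequence, a sequence of $I$-sets indexed by $\omega$; here $(*)$ supplies the coding that makes this map carry cofinal subsets of $(U_1,\supseteq)$ to cofinal subsets of $\prod_{n<\omega}I$. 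With all three hypotheses in hand, Theorem \ref{Tukeyidempotency} yields $U_1\cdot U_1\equiv_T U_1$, that is, $U_1$ is Tukey-idempotent.

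\textbf{The main obstacle} is establishing $U_1\geq_T I^\omega$ and the $I$-p.i.p.\ from property $(*)$, since both must be extracted from the same interaction between $(*)$ and the Ramsey-theoretic axioms of $\mathcal{R}$. In particular, verifying that the candidate Tukey map is genuinely cofinal --- that it carries cofinal subsets of $(U_1,\supseteq)$ to cofinal subsets of $I^\omega$ --- is the delicate step, and it is here that the exact formulation of $(*)$ must be used in full.
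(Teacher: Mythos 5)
Your overall skeleton --- reduce to Theorem \ref{Tukeyidempotency} by verifying $I\subseteq U_1^*$, the $I$-p.i.p., and $U_1\geq_T I^\omega$ --- is exactly the paper's strategy, and your treatment of $I\subseteq U_1^*$ is fine. But the mechanisms you propose for the two substantive hypotheses are not the ones that work. A minor point first: the $I$-p.i.p.\ is not obtained by fusion through the abstract Ellentuck theorem. In the paper (Lemma \ref{Ipip}) it follows from clause (i) of $(*)$ alone, by a forcing argument: one decides names for a $\leq_I$-decreasing sequence of members of the generic filter along a descending chain of conditions, takes a lower bound of that chain using $\sigma$-closure of $\leq_I$, and uses separativity to see that this lower bound is $\leq_I$ every decided value. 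No Ramsey-theoretic input is needed there.

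The genuine gap is in $U_1\geq_T I^\omega$. There is no ``explicit cofinal map from $(U_1,\supseteq)$ to $I^\omega$ read off level by level,'' and property $(*)$ supplies no such ``coding'' --- nothing in Definition \ref{defn.star} has that form. Clause (iii) of $(*)$ is a dichotomy, and the paper's proof of Theorem \ref{maintheorem} splits into two cases accordingly, each reaching $U_1\geq_T I^\omega$ \emph{indirectly}, via general ultrafilter theory rather than a hand-built reduction. In Case 1 ($I=I^0_{\mathcal{R}}$ and the depth map projects $U_1$ onto a Ramsey ultrafilter), one uses that $U_1$ is Rudin--Keisler, hence Tukey, above a Ramsey ultrafilter, and Ramsey ultrafilters are rapid, so $U_1\geq_T(\omega^\omega,\leq)$; since $I^0_{\mathcal{R}}\equiv_T\omega$, this yields $U_1\geq_T I^\omega$. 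In Case 2 (below every condition, $I^0_{\mathcal{R}|B}$ fails $\sigma$-closure), one constructs by density a $\leq_{I^0_{\mathcal{R}}}$-decreasing sequence of members of the generic filter with no $I^0_{\mathcal{R}}$-pseudo-intersection, concludes that $U_1$ is \emph{not} a p-point, invokes Theorem 4.2 of \cite{TomCommute} (non-p-points are Tukey above $(\omega^\omega,\leq)$), and then uses the assumption $I^\omega\leq_T\omega^\omega$ from clause (ii). Your proposal never engages clause (iii) at all, and without it there is no route to $U_1\geq_T I^\omega$: the whole point of the hypothesis is to guarantee that one of these two indirect arguments applies.
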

The assumption $(*)$ holds for all  known 
topological Ramsey spaces that have $\sigma$-closed separative quotients which force ultrafilters on $\mathcal{AR}_1$.
Examples include Ramsey ultrafilters (forced by the Ellentuck space),
all ultrafilters forced by $\mathcal{P}(\om^{\al})/\fin^{\otimes \al}$, $1\le \al<\om_1$,
(equivalently, forced by high and infinite-dimensional Ellentuck spaces \cite{DobrinenJSL15,DobrinenJML16}),
$k$-arrow ultrafilters of Baumgartner and Taylor \cite{Baumgartner/Taylor78}  and more generally, all ultrafilters forced by TRS's in \cite{Dobrinen/Mijares/Trujillo14}, and the higher order stable ordered union ultrafilters (forced by $\FIN^{[\infty]}_k$, $k\ge 1$).

\vskip 0.2 cm
 In \S \ref{Section: Applications}, we provide applications of our results and prove several immediate corollaries regarding: $p$-points with small generating sets, simple $P_\lambda$-points, and structural properties of the class of Canjar ultrafilters. We then present our main application to the theory of adding ultrafilters by forcings of the type $\mathcal{P}(\omega)/I$, where $I$ is a definable ideal on $\omega$. Hru\v{s}\'ak and Verner in \cite{Hrusak/Verner11}, used this to obtain another counterexample to the Canjar-Laflamme conjecture. They forced a generic ultrafilter for $P(\omega)/I$, where $I$ is any tall locally $F_\sigma$-ideal. They asked the following question:

\begin{question}[Question $5.7$ of \cite{Hrusak/Verner11}]\label{Q: HrusakVerner}
Is there a Borel ideal $I$ on $\omega$ such that $\mathcal{P}(\omega)/I$ adds a Canjar ultrafilter? 
\end{question}
We answer\footnote{ Guzm\'{a}n and Hru\v{s}\'{a}k informed us that they have answered this question independently using different methods in unpublished work.} this question negatively, even for analytic ideals $I$. This is a direct corollary (see Corollary \ref{cor: Answer question}) of the following theorem:
\begin{thmMain}
     Let $I$ be an analytic ideal on $\omega$ such that $P(\omega)/I$ does not add reals. Then $P(\omega)/I$ adds an ultrafilter $U$ which is Tukey above $(\omega^\omega,\leq)$.
\end{thmMain}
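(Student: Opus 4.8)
The plan is to verify, in the generic extension $V[G]$, the hypotheses of Theorem~\ref{Tukeyidempotency} for the generic ultrafilter $U$ taken together with the forcing ideal $I$ itself, conclude that $U$ is Tukey-idempotent, and then upgrade this to $(U,\supseteq)\geq_T(\omega^\omega,\leq)$. Write $G$ for the generic filter on $P(\omega)/I$ and $U=\{X\subseteq\omega:[X]_I\in G\}$ for the induced ultrafilter; since $[A]_I=0$ for $A\in I$ we have $I\subseteq U^*$, so $I$ is an ideal contained in $U^*$, as required. The hypothesis that $P(\omega)/I$ adds no reals will be used repeatedly in the form: every countable sequence $\langle X_n:n<\omega\rangle$ of subsets of $\omega$ lying in $V[G]$ already lies in $V$ (such a sequence is coded by a real), and the analytic set $I$ is the same in $V$ and in $V[G]$ by absoluteness.

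First I would establish the \textit{$I$-p.i.p.} for $U$. Let $\langle X_n:n<\omega\rangle\subseteq U$ be given in $V[G]$; by the no-new-reals hypothesis this sequence belongs to $V$. In $V[G]$ the statement $\forall n\,(X_n\in U)$ holds, so some condition $[Z]_I\in G$ forces $\forall n\,(\check X_n\in\dot U)$. For a ground-model set $X_n$ one has $[Z]_I\Vdash\check X_n\in\dot U$ iff $[Z]_I\leq[X_n]_I$, i.e.\ $Z\setminus X_n\in I$; indeed, if $Z\setminus X_n\notin I$ then $[Z\setminus X_n]_I$ is a nonzero condition below $[Z]_I$ forcing $X_n\notin\dot U$. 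Hence $Z\setminus X_n\in I$ for every $n$, and $Z\in U$ because $[Z]_I\in G$. Thus $Z$ is the desired pseudo-intersection and $U$ has the \textit{$I$-p.i.p.}

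The crux is to show $U\geq_T I^\omega$. I would first dispose of the case in which $I$ fails to be tall: then below a suitable condition $[A]_I$ the quotient is isomorphic to $P(A)/\mathrm{fin}\cong P(\omega)/\mathrm{fin}$, so $U\restriction A$ is a Ramsey, hence rapid, ultrafilter, giving $U\geq_T U\restriction A\geq_T(\omega^\omega,\leq)$ already below $[A]_I$; since this argument (or the one below) applies beneath every condition, $\mathbf 1$ forces $U\geq_T(\omega^\omega,\leq)$. So I may assume $I$ is tall beneath the working condition. In the tall case I would build, in $V[G]$, a monotone map $f\colon(U,\supseteq)\to I^\omega$ whose range is cofinal. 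The idea is to use the no-new-reals hypothesis to approximate $f$ by ground-model data: any candidate cofinal family in $I^\omega$ is already present in $V$, and genericity is used to ensure that the traces of sets in $U$ dominate it coordinatewise. When $I$ is (as one expects under $\sigma$-distributivity) an analytic P-ideal, Solecki's representation $I=\mathrm{Exh}(\varphi)$ supplies the diagonalization needed to keep coordinatewise unions inside $I$ and their complements inside $U$. This step---producing a single cofinal map into the \emph{full} product $I^\omega$ rather than into a localized product $\prod_n(I\restriction b_n)$, which is only Tukey-below $I^\omega$---is where the real work and the main obstacle lie.

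With both hypotheses in hand, Theorem~\ref{Tukeyidempotency} yields that $U$ is Tukey-idempotent, i.e.\ $U\cdot U\equiv_T U$. Finally, the Fubini square $U\cdot U$ is never a $p$-point, so by Theorem~$4.2$ of \cite{TomCommute} we have $U\cdot U\geq_T(\omega^\omega,\leq)$; therefore $U\equiv_T U\cdot U\geq_T(\omega^\omega,\leq)$, which is the desired conclusion. Alternatively, once one knows $U$ is not a strong $p$-point, Theorem~\ref{maintheorem} gives $(U,\supseteq)\geq_T(\omega^\omega,\leq)$ directly.
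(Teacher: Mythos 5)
There is a genuine gap, and it sits exactly where you say it does: the claim $U\geq_T I^\omega$ is never proved, only announced as ``where the real work and the main obstacle lie.'' Your $I$-p.i.p.\ argument is correct (the no-new-reals hypothesis puts the sequence in $\mathbf{V}$, and a condition $[Z]_I\in G$ forcing $\forall n\,(\check X_n\in\dot U)$ must satisfy $Z\setminus X_n\in I$), and the final upgrade from Tukey-idempotency to $(U,\supseteq)\geq_T(\omega^\omega,\leq)$ is fine; but without the middle step you have a plan, not a proof. Worse, the sketch you offer for that step cannot work in general: $\sigma$-distributivity of $\mathcal{P}(\omega)/I$ does \emph{not} make $I$ an analytic P-ideal, so Solecki's $\mathrm{Exh}(\varphi)$ representation is unavailable. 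The ideal $\fin\otimes\fin$ is a tall analytic (indeed Borel) non-P-ideal whose quotient adds no reals (it is forcing equivalent to the $\sigma$-closed two-dimensional Ellentuck forcing), and your tall-case argument has nothing to say about it. Your tall/non-tall dichotomy is also the wrong case split: tallness is not the property that governs the structure of the generic ultrafilter here.

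The paper's proof avoids $I^\omega$ and Theorem \ref{Tukeyidempotency} altogether, and its dichotomy is \emph{locally $F_\sigma$ versus not}, via Hru\v{s}\'ak--Verner's Theorem \ref{thm:2.5ofHV}. If $I$ is not locally $F_\sigma$, the generic ultrafilter is not a $p$-point, and non-$p$-points are always Tukey above $(\omega^\omega,\leq)$ (this handles $\fin\otimes\fin$-type examples in one line). If $I$ is locally $F_\sigma$, one passes by density to $A^*\in U$ with $I\restriction A^*$ being $F_\sigma$, takes Mazur's lower semicontinuous submeasure $\mu$ with $I\restriction A^*=\{X:\mu(X)<\infty\}$, and \emph{explicitly} defines a monotone map $\varphi:(U\restriction A^*,\supseteq)\to(\omega^\omega,\leq)$ by letting $\varphi(X)(n+1)$ be the least $m>\varphi(X)(n)$ with $\mu\bigl(X\cap(\varphi(X)(n),m]\bigr)>n+1$; cofinality is then a density argument: below any $I$-positive $X$ one thins out to an $I$-positive $X'\subseteq X$ whose $\mu$-mass accumulates only beyond the prescribed bounds $f(n)$, forcing $\varphi(X')\geq f$. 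Note also that the logical order is reversed relative to your plan: the paper proves $U\geq_T(\omega^\omega,\leq)$ directly and obtains Tukey-idempotency of the generic ultrafilter as a \emph{corollary} (via Dobrinen--Todorcevic, since in the locally $F_\sigma$ case $U$ is a $p$-point), rather than trying to establish idempotency first, which is what forced you into the unproved and stronger-looking hypothesis $U\geq_T I^\omega$.
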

Finally, in \S\ref{Section:Measurable}, we show that there are no Canjar ultrafilters on measurable cardinals. This provides an answer to a question from \cite{QuestionGeneralized} attributed to Brook-Taylor. In the last section \S\ref{Section: Questions}, we present several related open problems.

\subsection*{Notation}\label{Section:notations} 
For any set $X$ and a cardinal $\lambda$, $[X]^{<\lambda}$ denotes the set of all subsets of $X$ of cardinality less than $\lambda$. Let $\fin=[\omega]^{<\omega}$, and $\FIN=\fin\setminus\{\emptyset\}$. Let $\omega^{<\omega}$ the collection of finite partial functions $f:\omega\to\omega$.
For a collection of sets $(P_i)_{i\in I}$ we let $\prod_{i\in I}P_i=\{f:I\rightarrow \bigcup_{i\in I}P_i\mid (\forall i)\, f(i)\in P_i\}$. Given a set $X\subseteq \omega$ such that $|X|=\alpha\leq\omega$, we denote by $\langle X(\beta)\mid \beta<\alpha\rangle$  the increasing enumeration of $X$.
Given a function $f:A\rightarrow B$, for $X\subseteq A$ we let $f''X=\{f(x)\mid x\in X\}$ and for $Y\subseteq B$ we let $f^{-1}Y=\{x\in X\mid f(x)\in Y\}$. Given an ultrafilter $U$ on $X$ and $f:X\to Y$ we denote by $f_*(X)=\{A\subseteq Y\mid f^{-1}A\in U\}$. We say that $W\leq_{RK}U$ if there is $f$ such that $f_*(U)=W$  and $U\equiv_{RK} W$ if $f$ is one-to-one on some set in $U$.

\section{The main result}
Recall that, given an ultrafilter $U$ on $\omega$, \emph{Mathias forcing restricted to $U$}, denoted $\mathbb{M}_U$, consists of conditions of the form $\langle a,A\rangle\in [\omega]^{<\omega}\times U$ with $\min(A)>\max(a)$. The order on $\mathbb{M}_U$ is defined by $\langle a,A\rangle \leq \langle b,B\rangle$ if and only if $b\sqsubseteq a$, $a\setminus b\subseteq B$, and $A\subseteq B$.
\begin{defn}
Let $\mathbb{P}$ be a notion of forcing. $\mathbb{P}$ \emph{does not add dominating reals} if
$$
\mathds{1}_{\mathbb{P}}\Vdash (\forall \dot{f}\in\omega^\omega)(\exists g\in\omega^\omega\cap {\mathbf V})(\exists^{\infty}n)(\dot{f}(n)\leq g(n)),
$$
where ${\mathbf V}$ denotes the ground model.
\end{defn}
\begin{defn}\label{def: canjar}
    An ultrafilter $U$ is called \emph{Canjar} if  $\mathbb{M}_U$ does not add a dominating real.
\end{defn}
Canjar \cite{Canjar} proved that such ultrafilters exist under $\mathfrak{d}=\mathfrak{c}$.
\begin{prop}[\cite{Canjar}]\label{Prop: Canjar}\ {}
\begin{enumerate}
    \item If $U$ is Canjar and $U\geq_{RK}V$, then $V$ is Canjar.
    \item If $U$ is Canjar, then $U$ is a $p$-point with no rapid $RK$-predecessors. 
\end{enumerate}
\end{prop}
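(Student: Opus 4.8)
We prove the two parts separately; both implications are essentially observations of Canjar, so I only sketch the arguments and work throughout with the forcing definition of Canjar-ness.

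\textbf{Part (1).} Suppose $f_*(U)=V$, so that $B\in V$ iff $f^{-1}(B)\in U$. The plan is to show that $\mathbb{M}_U$ adds an $\mathbb{M}_V$-generic filter; then any real in the $\mathbb{M}_V$-extension already lives in the $\mathbb{M}_U$-extension, and a real dominating all ground-model reals stays dominating there, so Canjar-ness of $U$ forces $\mathbb{M}_V$ to add no dominating real. Concretely, if $r$ is the $\mathbb{M}_U$-Mathias real (a pseudo-intersection of $U$, i.e.\ $r\subseteq^* A$ for all $A\in U$), then for $B\in V$ we have $f^{-1}(B)\in U$, whence $r\subseteq^* f^{-1}(B)$ and so $f''r\subseteq^* B$; thus $f''r$ is a pseudo-intersection of $V$. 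One checks that $\langle a,A\rangle\mapsto\langle f''a, f''A\rangle$, after trimming $A$ to $A\cap f^{-1}(\max(f''a),\infty)\in U$ so that $\min(f''A)>\max(f''a)$, is a projection of $\mathbb{M}_U$ onto $\mathbb{M}_V$, so that $f''r$ generates an $\mathbb{M}_V$-generic filter over ${\mathbf V}$. The resulting containment of extensions gives the claim.

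\textbf{Part (2), $p$-point.} We argue by contraposition: if $U$ is not a $p$-point we produce a dominating real in the $\mathbb{M}_U$-extension. Fix a decreasing sequence $A_0\supseteq A_1\supseteq\cdots$ in $U$ with no pseudo-intersection in $U$, witnessing the failure of the $p$-point property. Since the Mathias real satisfies $r\subseteq^* A_n$ for every $n$, the set $r\setminus A_{n+1}$ is finite, so $d(n)=\max(r\setminus A_{n+1})$ is well-defined in the extension and $d(n)\to\infty$. The plan is to show $d$ dominates every ground-model $g$: assuming some condition $\langle a,B\rangle$ forced $d(n)<g(n)$ (that is, $r\cap[g(n),\infty)\subseteq A_{n+1}$) for infinitely many $n$, one uses the order on $\mathbb{M}_U$ to decide these values along a fusion and thereby extract, inside $B$, a single set in $U$ almost contained in every $A_n$, contradicting the absence of a pseudo-intersection. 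This density/fusion argument is the combinatorial heart of the direction and the main obstacle; it is also where the no-pseudo-intersection hypothesis is genuinely used.

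\textbf{Part (2), no rapid $RK$-predecessor.} Suppose toward a contradiction that $V\leq_{RK}U$ with $V$ rapid. By Part (1), $V$ is Canjar. But a rapid ultrafilter is never Canjar, because the increasing enumeration of the $\mathbb{M}_V$-Mathias real $r_V$ is dominating: given a ground-model $g$ and a condition $\langle a,B\rangle$, rapidity of $V$ yields $X\in V$ whose increasing enumeration dominates $g$ with margin exceeding $|a|$, and strengthening to $\langle a,B\cap X\rangle\in\mathbb{M}_V$ forces $r_V\setminus\max(a)\subseteq X$, so the enumeration of $r_V$ eventually dominates $g$. By this density argument $\mathbb{M}_V$ adds a dominating real, so $V$ is not Canjar, a contradiction. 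Hence $U$ has no rapid $RK$-predecessor.
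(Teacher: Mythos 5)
Since the paper states this proposition as a citation to Canjar's work and gives no proof of its own, I am judging your argument on its own terms. Your Part (1) is essentially right, with one imprecision: the map $\langle a,A\rangle\mapsto\langle f''a,f''A\rangle$ is not order-preserving on all of $\mathbb{M}_U$, even after trimming, because an extension may add to the stem elements $x\in A$ with $f(x)<\max(f''a)$, and then $f''a$ is not an initial segment of the image of the new stem. One must work on the dense set $D$ of conditions $\langle a,A\rangle$ with $A\subseteq f^{-1}\bigl((\max(f''a),\infty)\bigr)$; on $D$ your map is order-preserving, satisfies the projection property below $D$ (preimages of elements of $b\setminus f''a\subseteq f''A$ can be chosen inside $A$), and the induced filter is $\mathbb{M}_V$-generic with Mathias real $f''r$. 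Your argument in the last paragraph that rapid ultrafilters are not Canjar is also correct.

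The genuine gap is in the $p$-point half of Part (2): your function $d(n)=\max(r\setminus A_{n+1})$ is provably \emph{not} dominating, so no fusion argument can complete your plan. Indeed, for any condition $\langle a,B\rangle$ and any $n$, the condition $\langle a,B\cap A_{n+1}\rangle$ is a legitimate extension (both sets are in $U$), and it forces $r\setminus A_{n+1}\subseteq a$, hence $d(n)\leq\max(a)$. Consequently, for every strictly increasing ground-model $g$ and every $n_0$, the set of conditions forcing $\exists n\geq n_0\,\bigl(d(n)<g(n)\bigr)$ is dense, so $\mathds{1}_{\mathbb{M}_U}\Vdash\exists^{\infty}n\,\bigl(d(n)<g(n)\bigr)$; the statement you assume toward a contradiction is forced by \emph{every} condition, so no contradiction can be extracted from it. The defect is that shrinking the side condition pushes the generic into deep $A_n$'s, keeping $\max(r\setminus A_{n+1})$ small forever; one must instead look at minima of the tail part of $r$. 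Concretely: arrange $\bigcap_n A_n=\emptyset$, set $I_n=A_n\setminus A_{n+1}$, and let $h(k)$ be the unique $n$ with $k\in I_n$. For any $g\in\omega^\omega$, the set $\bigcup_n\bigl(I_n\cap g(n)\bigr)$ meets each $I_n$ in a finite set, hence is a pseudo-intersection of $\langle A_n\mid n<\omega\rangle$ and therefore is \emph{not} in $U$; so $C_g=\{k\in\omega : k\geq g(h(k))\}\in U$, and the Mathias real satisfies $r\subseteq^* C_g$. Now define $\Psi(n)=\min(r\cap A_n)$. For increasing $g$, all elements of $r\cap A_n$ lie in $C_g$ once $n$ exceeds the (finitely many) layers containing the exceptional points of $r\setminus C_g$, and then $\Psi(n)\geq g\bigl(h(\Psi(n))\bigr)\geq g(n)$ since $h(\Psi(n))\geq n$. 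Hence $\Psi$ dominates every ground-model real, and $U$ is not Canjar. With $d$ replaced by $\Psi$ and this use of the no-pseudo-intersection hypothesis, your contrapositive strategy for the $p$-point half goes through.
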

Hru\v{s}\'ak and Minami \cite{HrusakMinami2014} found a combinatorial characterization of Canjar ultrafilters which we describe next.
\begin{defn}
    Given an ultrafilter $U$ on $\omega$, we define $U^{<\omega}$ to be the filter on $\FIN$ generated by the sets $\{[A]^{<\omega}\mid A\in U\}$.
\end{defn}
We say that a filter $F$ on a set $X$ is a $P^+$-filter if for any $\subseteq$-decreasing sequence $\langle X_n\mid n<\omega\rangle\subseteq F^+$ there is $X_{\infty}\in F^+$ such that $\forall n \ |X_{\infty}\setminus X_n|<\aleph_0$. We call $X_{\infty}$ a \textit{pseudo-intersection} of the sequence $\langle X_n\mid n<\omega\rangle\subseteq F^+$.

In \cite{HrusakMinami2014}, it was shown that an ultrafilter $U$ on $\omega$ is Canjar if and only if $U^{<\omega}$ is a $P^+$-filter.

Now we are ready to  state and prove our main characterization:
\begin{thm}\label{thm:main characterization}
The following are equivalent for an ultrafilter $U$ on $\omega$:
\begin{enumerate}[label=(\roman*)]
    
    \item $U\cdot U>_T U$ and $U$ is a $p$-point. \label{thm:main characterizatio1}
    \item $(U,\supseteq)\not\geq_T (\omega^\omega,\leq)$. \label{thm:main characterizatio2}
    \item $U$ is Canjar. \label{thm:main characterizatio3}
    \item $U$ is a strong $p$-point. \label{thm:main characterizatio4}
    \item $U^{<\omega}$ is a $P^+$-filter. \label{thm:main characterizatio5}
\end{enumerate}
\end{thm}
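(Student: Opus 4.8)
The plan is to prove Theorem~\ref{thm:main characterization} by separating the ``soft'' equivalences, which follow by combining results quoted above, from the single substantive new link: the equivalence of the Tukey condition (ii) with the Canjar condition (iii).

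First the soft part. The equivalence of (iii) and (iv) is immediate, since by Canjar's theorem \cite{Canjar} an ultrafilter is a strong $p$-point exactly when $\mathbb{M}_U$ adds no dominating real, which is the definition of Canjar. The equivalence of (iii) and (v) is the Hru\v{s}\'ak--Minami characterization \cite{HrusakMinami2014}. Finally, (i)$\Leftrightarrow$(ii) follows from three quoted facts: projection to the first coordinate gives $U\le_{RK}U\cdot U$ and hence $U\le_T U\cdot U$ always, so ``$U\cdot U>_T U$'' is the same as ``$U$ is not Tukey-idempotent''; by Dobrinen--Todorcevic \cite{Dobrinen/Todorcevic11} a $p$-point is Tukey-idempotent iff it is Tukey above $(\omega^\omega,\le)$; and by Theorem~4.2 of \cite{TomCommute} every non-$p$-point is Tukey above $(\omega^\omega,\le)$, so (ii) already forces $U$ to be a $p$-point. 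Chasing these yields (i)$\Leftrightarrow$(ii) with no new work, so everything reduces to proving (ii)$\Leftrightarrow$(iii).

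For (ii)$\Rightarrow$(iii) I would argue contrapositively through (v): assuming $U^{<\omega}$ is not a $P^+$-filter, I produce a witness that $(U,\supseteq)\ge_T(\omega^\omega,\le)$. Fix a $\subseteq$-decreasing sequence $\langle \mathcal{X}_n\mid n<\omega\rangle\subseteq (U^{<\omega})^+$ with no pseudo-intersection in $(U^{<\omega})^+$, and define $\phi\colon (U,\supseteq)\to(\omega^\omega,\le)$ by
\[
\phi(A)(n)=\min\{\max(s)\mid s\in\mathcal{X}_n,\ s\subseteq A\}.
\]
This is well defined since positivity of $\mathcal{X}_n$ means every $[A]^{<\omega}$ meets $\mathcal{X}_n$; it is monotone because shrinking $A$ only removes candidates $s$ and hence raises the minimum; and its range is cofinal because any $g$ bounding no $\phi(A)$ from below would make $\mathcal{X}_\infty=\bigcup_n\{s\in\mathcal{X}_n\mid \max(s)<g(n)\}$ a positive pseudo-intersection of the $\mathcal{X}_n$ (the pieces with $n$ below a fixed level are finite, and the rest sit inside $\mathcal{X}_m$), contradicting the choice of the sequence. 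Being monotone with cofinal range, $\phi$ is a cofinal map, which is exactly a witness to $(\omega^\omega,\le)\le_T(U,\supseteq)$, giving $\neg$(ii).

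The reverse implication (iii)$\Rightarrow$(ii) is where I expect the real difficulty, and I would treat it as $\neg$(ii)$\Rightarrow\neg$(iii): from $(U,\supseteq)\ge_T(\omega^\omega,\le)$ I must show $U$ is not Canjar. If $U$ is not a $p$-point this is free from Proposition~\ref{Prop: Canjar}(2), so assume $U$ is a $p$-point; then by Dobrinen--Todorcevic \cite{Dobrinen/Todorcevic11} it is Tukey-idempotent, and what remains is to see that a $p$-point Tukey above $(\omega^\omega,\le)$ cannot be Canjar. Here I would take a monotone cofinal map $\phi$ (arranged so each $\phi(A)$ is nondecreasing) and build from it either an $\mathbb{M}_U$-name for a dominating real, or dually a $\subseteq$-decreasing sequence in $(U^{<\omega})^+$ with no positive pseudo-intersection. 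The obstacle, which a naive coordinatewise read-off of $\phi$ along the generic does \emph{not} overcome, is that for a fixed coordinate $n$ the second coordinates of generic conditions with a given stem refine cofinally, so the values $\phi(A)(n)$ diverge and no single value is selected. I expect this to require a fusion/continuous-reading argument that pins down one $U$-set per coordinate, exploiting the $p$-point (that is, $\fin$-p.i.p.) property to extract pseudo-intersections and the tools of \cite{Benhamou/Dobrinen24} --- in particular Theorem~\ref{Tukeyidempotency} and the $I$-p.i.p.\ machinery --- to organize the diagonalization so that the resulting real genuinely dominates $\omega^\omega\cap\mathbf{V}$. This last step is the technical heart of the theorem.
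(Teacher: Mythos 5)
Your handling of the soft equivalences and of (ii)$\Rightarrow$(iii) is correct and essentially identical to the paper's: your map $\phi(A)(n)=\min\{\max(s)\mid s\in\mathcal{X}_n,\ s\subseteq A\}$ is exactly the map $f_Z$ of Proposition \ref{prop: the easy direction}, and your pseudo-intersection argument for cofinality is the same as the paper's. (A minor attribution slip: the paper credits (iii)$\Leftrightarrow$(iv) to Blass--Hru\v{s}\'ak--Verner \cite{BlassHrushaVerner}, not to Canjar, but this has no mathematical effect.)

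The genuine gap is the implication (iii)$\Rightarrow$(ii), i.e.\ $\neg$(ii)$\Rightarrow\neg$(iii), which you explicitly leave as an expectation rather than a proof. You correctly identify the obstacle (for a fixed coordinate $n$, no Mathias condition decides $\phi(A)(n)$, since the second coordinates of conditions refine cofinally), but you do not supply the mechanism that overcomes it, and that mechanism is precisely the new technical content of the theorem. The paper's route has three steps. First, it isolates the notion of being \emph{canonically Tukey above} $(\omega^\omega,\leq^*)$ (Definition \ref{Tukey*}): a monotone cofinal $f:U\to\omega^\omega$ together with a monotone finitary approximation $\hat f:[\omega]^{<\omega}\to\omega^{<\omega}$ satisfying, crucially, condition \ref{Tukey*2} (along any Mathias condition every coordinate eventually enters $\dom(\hat f)$) and condition \ref{Tukey*3} ($f(Y)(k)$ is recovered as a minimum of the values $\hat f(Y\cap n)(k)$). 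Second, Proposition \ref{finalstep} shows that a $p$-point with $U\geq_T(\omega^\omega,\leq)$ admits such a pair; the tool here is not Theorem \ref{Tukeyidempotency} or the $I$-p.i.p.\ machinery you name, but the Dobrinen--Todorcevic continuous cofinal map theorem for $p$-points (Lemma \ref{lem:continuouscofinalmaps}), applied to a cofinal $\varphi:U\to U^\omega$ coming from $U\equiv_T U^\omega$, with $f(Y)(n)=\min(\varphi(Y)_n)$ and $\hat f(s)(n)=\min(\psi(s)_n)$. Third --- and this step your sketch omits entirely --- Lemma \ref{lem:notCanjarcharacterization} converts the canonical reduction into a dominating real in $V[G]$: since nothing is decided along $X_G$ itself, the paper considers all finite corrections $B_n=t_n\cup (X_G\setminus \max(t_n)+1)$ over an enumeration $\langle t_n\mid n\in\omega\rangle$ of $[\omega]^{<\omega}$, proves each read-off function $h_n(k)=\min\{\hat f(B_n\cap m)(k)\mid m\in\omega,\ k\in\dom(\hat f(B_n\cap m))\}$ is total by a density argument using \ref{Tukey*2}, and then takes a single $h\in V[G]$ with $h_n\leq^* h$ for all $n$; for $g\in\omega^\omega\cap\mathbf{V}$ one picks $B\in U$ with $f(B)\geq^* g$ and $N$ with $X_G\setminus N\subseteq B$, so that the correction with $t_n=B\cap N$ satisfies $B_n\subseteq B$, and monotonicity of $\hat f$ forces $h_n\geq f(B)$ pointwise, whence $h\geq^* g$. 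Without this construction (or an equivalent substitute), your proposal proves only the directions that were already known or easy, and not the one implication that constitutes the paper's contribution.
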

 The equivalence between \ref{thm:main characterizatio3} and \ref{thm:main characterizatio4} was established by Blass, Hru\v{s}\'ak and Verner \cite{BlassHrushaVerner}. Hru\v{s}\'ak and Minami proved the equivalence of \ref{thm:main characterizatio4} and \ref{thm:main characterizatio5} in \cite{HrusakMinami2014}. Dobrinen and Todorcevic proved the implication from \ref{thm:main characterizatio1} to \ref{thm:main characterizatio2}, and that \ref{thm:main characterizatio2} entails $U\cdot U>_T U$. For the implication $U\not\geq_T \omega^\omega\Rightarrow U$ is a $p$-point, we refer the reader to \cite{TomCohesive}. Hence, our contribution is the equivalence between \ref{thm:main characterizatio2} and \ref{thm:main characterizatio3}.
Let us start by proving that \ref{thm:main characterizatio2} implies \ref{thm:main characterizatio3}. The other direction is proven in \ref{finalstep}.
\begin{prop}\label{prop: the easy direction}
    If $U$ is not Canjar, then $(U,\supseteq)\geq_T (\omega^\omega, \leq)$.
\end{prop}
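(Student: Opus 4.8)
The plan is to invoke the Hru\v{s}\'ak--Minami characterization recalled just above: $U$ is not Canjar if and only if $U^{<\omega}$ is not a $P^+$-filter. So I would fix a $\subseteq$-decreasing sequence $\langle H_n\mid n<\omega\rangle\subseteq (U^{<\omega})^+$ admitting no pseudo-intersection in $(U^{<\omega})^+$, and from it build a monotone map $\phi:(U,\supseteq)\to(\omega^\omega,\le)$ whose image is dominating. This suffices: for a monotone map between these directed posets, a dominating image already yields a cofinal map, since given $\mathcal{C}$ cofinal in $(U,\supseteq)$ and $g\in\omega^\omega$, one picks $A\in U$ with $\phi(A)\ge g$ and then $C\in\mathcal{C}$ with $C\subseteq A$, whence monotonicity gives $\phi(C)\ge\phi(A)\ge g$, so $\phi''\mathcal{C}$ is cofinal. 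Thus $\phi$ witnesses $(U,\supseteq)\ge_T(\omega^\omega,\le)$.

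The map I would use is $\phi(A)(n)=\min\{\max(s)\mid s\in H_n,\ s\subseteq A\}$ for $A\in U$ and $n<\omega$. This is well defined because $H_n\in(U^{<\omega})^+$ means precisely that every $A\in U$ contains some finite $s\in H_n$. Monotonicity is immediate: if $A\supseteq B$ then $\{s\in H_n\mid s\subseteq B\}\subseteq\{s\in H_n\mid s\subseteq A\}$, so the minimum over the larger family is no larger, giving $\phi(A)(n)\le\phi(B)(n)$.

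The real content is showing that $\{\phi(A)\mid A\in U\}$ is dominating, and here the failure of $P^+$ enters. Suppose it were not; then there is $g\in\omega^\omega$ such that for every $A\in U$ there is $n$ with $\phi(A)(n)<g(n)$, i.e.\ some $s\in H_n$ with $s\subseteq A$ and $\max(s)<g(n)$. I would then set $H_\infty=\bigcup_{n<\omega}\{s\in H_n\mid \max(s)<g(n)\}$. The displayed property says exactly that every $A\in U$ contains some $s\in H_\infty$, so $H_\infty\in(U^{<\omega})^+$. Moreover $H_\infty$ is a pseudo-intersection of $\langle H_n\rangle$: fixing $n$, the contribution of indices $m\ge n$ lies in $\bigcup_{m\ge n}H_m\subseteq H_n$ by decreasingness, while the contribution of indices $m<n$ is contained in $\bigcup_{m<n}\{s\in\FIN\mid\max(s)<g(m)\}$, which is finite since there are only finitely many finite sets with maximum below a fixed bound. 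Hence $H_\infty\setminus H_n$ is finite for every $n$, contradicting the choice of $\langle H_n\rangle$. So the image is dominating, and $\phi$ is the required cofinal map.

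I expect the last step to be the main obstacle: extracting a genuine pseudo-intersection in $(U^{<\omega})^+$ from a hypothetical $g$ witnessing non-domination. What makes it succeed is the finiteness of $\{s\in\FIN\mid\max(s)<g(m)\}$ for each $m$, which converts ``pointwise below $g$'' into the finite errors demanded by $\subseteq^*$; this is exactly the bookkeeping linking the everywhere-domination order on $\omega^\omega$ to the $P^+$ property of $U^{<\omega}$.
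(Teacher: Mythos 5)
Your proof is correct and follows essentially the same route as the paper's: the same Hru\v{s}\'ak--Minami reduction, the same map $A\mapsto\bigl(n\mapsto\min\{\max(s)\mid s\in H_n,\ s\subseteq A\}\bigr)$, and the same pseudo-intersection $\bigcup_n\{s\in H_n\mid\max(s)<g(n)\}$ (the paper phrases this step directly rather than by contradiction, which is an immaterial difference). You merely spell out two details the paper leaves implicit, namely that a monotone map with cofinal image is a cofinal map, and the finiteness computation showing $H_\infty\setminus H_n$ is finite.
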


\begin{proof}
    For $Z\subseteq\FIN$ and $n_0\in\omega$, we denote $Z\setminus n_0=\{s \in Z : \max(s)<n_0\}$. Assume that $U$ is not Canjar. Since \ref{thm:main characterizatio3} and \ref{thm:main characterizatio5} are equivalent \cite{HrusakMinami2014}, there is some $\subseteq$-decreasing sequence $\langle X_n\mid n<\omega\rangle\subseteq(U^{<\omega})^+$  that witnesses $U^{<\omega}$ not being a $P^+$-filter. For $Z \in U$, we define $f_Z\in\omega^\omega$ by $$f_Z(n)=\min\{\max(s)\mid s\in[Z]^{<\omega}\cap X_n\}.$$ The map from $U$ to $\omega^\omega$ given by $Z \mapsto f_Z$ is a monotone map which we now prove is cofinal. For that purpose, take an arbitrary $g \in \omega^\omega$. Note that the set $\bigcup_{n\in\omega} (X_n\setminus g(n))$ is a pseudo intersection of the collection of the $X_n$'s; thus it cannot be in $(U^{<\omega})^+$. Consequently, there is some $Y_g \in U$ such that \begin{equation}(\bigcup_{n\in\omega} X_n\setminus g(n))\cap [Y_g]^{<\omega}=\emptyset.\end{equation}
    It follows that $g(n)\leq f_{Y_g}(n)$ for all $n\in\omega$, and we are done.
\end{proof}
For the other direction, we will use a stronger form of the Tukey order which will turn out to be the same in our context:
\begin{defn}\label{Tukey*}
We say that $U$ is \textit{canonically Tukey above $(\omega^\omega, \leq^*)$}\footnote{Here, $\leq^*$ denotes the \emph{eventual domination} order on $\omega^\omega$.} if there is $f:U\to \omega^\omega$ and $\hat{f}:[\omega]^{<\omega}\to \omega^{<\omega}$ such that:
\begin{enumerate}[label=(\alph*)]
    \item $f:(U, \supseteq)\to (\omega^\omega, \leq^*)$ is monotone and cofinal.\label{Tukey*0}
    \item Let $s,t\in [\omega]^{<\omega}$. If $s\subseteq t$, then $\dom(\hat{f}(s))\subseteq \dom(\hat{f}(t))$, and moreover for every $x\in \dom (\hat{f}(s))$, $\hat{f}(s)(x)\geq \hat{f}(t)(x)$. In this case, we write $\hat{f}(s)\geq \hat{f}(t)$.\label{Tukey*1}
    \item For every $\langle s,A\rangle \in \mathbb{M}_U$, and every $m\in\omega$, there is some $t\in [A]^{<\omega}$ such that $m\in\dom(\hat{f}(s\cup t))$.\label{Tukey*2}
    \item\label{Tukey*3} There is some $X\in U$ such that for every $Y\in U\restriction X$ and $k \in \omega$, we have \begin{equation} f(Y)(k)=\min\{\hat{f}(Y\cap n)(k)\mid n\in\omega, \ k\in\dom(\hat{f}(Y\cap n)\}.\end{equation}
\end{enumerate}
\end{defn}
\begin{rem}\label{remark}
By condition \ref{Tukey*1}, the value $f(Y\cap n)(k)$ in \ref{Tukey*3} stabilizes, namely,  for all $Y\in U\upharpoonright X$ and $k\in \omega$, $(\exists n=n(Y,k))(\forall m\geq n)$ $\hat{f}(Y\cap n)(k)=\hat{f}(Y\cap m)(k)=f(Y)(k)$.
\end{rem}
\begin{example}
Let us now give some examples that motivate this definition. In the following exmples, the $X\in U$ of condition \ref{Tukey*3} of \Cref{Tukey*} is going to be $X=\omega$.
\begin{enumerate}[label=(\roman*)]
    \item If $U$ is rapid,   for $Y\in U$ and  $s\in[\omega]^{<\omega}$, we let $f(Y)=\pi_Y$,  $\hat{f}(s)=\pi_s$ (Recall that $\pi_X$ denote the enumerating function). Note that if $Y\in U$ and $n<m<\omega$, then $Y\cap n\sqsubseteq Y\cap m$, and thus, in condition \ref{Tukey*3} is satisfied.
    \item More generally, if $\varphi:\omega\to\omega$ is a map and the $RK$-image $\varphi_*(U)$ is rapid, then for $Y\in U$ and $s\in [\omega]^{<\omega}$, we can set $f(Y)=\pi_{\varphi[Y]}$, and $\hat{f}(s)=\pi_{\varphi[s]}$. If $s\subseteq t$, then $\varphi[s]\subseteq \varphi[t]$, and  condition \ref{Tukey*1} of \Cref{Tukey*} is satisfied. Let us now verify condition \ref{Tukey*3}. Note that it is possible that $n<m$ are both in $Y\in U$, but also $\varphi(m)<\varphi(n)$. In this case, if $\varphi(n)=\pi_{\varphi[Y\cap n+1]}(k)$ then we would have $\pi_{\varphi[Y\cap m+1]}(k)<\pi_{\varphi[Y\cap n+1]}(k)$. However, by Remark \ref{remark},  $\hat{f}(Y\cap n)(k)$ stabilizes. Hence, the term in condition \ref{Tukey*3} ends up being $\pi_{\varphi[Y]}=f(Y)$. 
\end{enumerate}
\end{example}
\begin{rem}
    Note that if $U$ is canonically above $(\omega^\omega,\leq^*)$ then $U\geq_T(\omega^\omega,\leq^*)$, which implies that $U\geq_T (\omega^\omega,\leq^*)\times\omega\equiv_T (\omega^\omega,\leq)$.
\end{rem}
Let us now strengthen \Cref{prop: the easy direction} to our new definition.
\begin{lem}
    If $U$ is not Canjar, then $(U,\supseteq)$ is canonically Tukey above $(\omega^\omega,\leq^*)$.
\end{lem}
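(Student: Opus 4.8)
The plan is to upgrade the construction in \Cref{prop: the easy direction} so that it witnesses the stronger, ``canonical'' form of Tukey reducibility from \Cref{Tukey*}. I would begin exactly as before: assuming $U$ is not Canjar, fix a $\subseteq$-decreasing sequence $\langle X_n\mid n<\omega\rangle\subseteq(U^{<\omega})^+$ witnessing the failure of the $P^+$-property, and define $f(Z)=f_Z$ by $f_Z(n)=\min\{\max(s)\mid s\in[Z]^{<\omega}\cap X_n\}$. The content of \Cref{prop: the easy direction} already gives that this $f$ is monotone and cofinal into $(\omega^\omega,\le)$, and the same computation shows cofinality into $(\omega^\omega,\le^*)$, which is condition \ref{Tukey*0}. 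The new work is to produce the finitary approximation $\hat f:[\omega]^{<\omega}\to\omega^{<\omega}$ compatible with $f$ and satisfying \ref{Tukey*1}, \ref{Tukey*2}, \ref{Tukey*3}.

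The natural definition is to let $\hat f(s)$ record, for each level $n$, the value $\max(t)$ of the first (minimal-$\max$) $t\in[s]^{<\omega}\cap X_n$ that is \emph{already visible inside $s$}. Concretely I would set $k\in\dom(\hat f(s))$ exactly when some $t\in[s]^{<\omega}\cap X_k$ exists, and then put $\hat f(s)(k)=\min\{\max(t)\mid t\in[s]^{<\omega}\cap X_k\}$. Condition \ref{Tukey*1} is then immediate: enlarging $s$ to $t\supseteq s$ can only add more candidate subsets, so the domain grows and the minimum can only decrease, giving $\hat f(s)\ge\hat f(t)$ in the sense of the definition. Condition \ref{Tukey*3} is designed to hold by construction: for $Y\in U$ and fixed $k$, as $n$ increases $Y\cap n$ sees more and more elements of $[Y]^{<\omega}\cap X_k$, so $\min\{\hat f(Y\cap n)(k)\mid k\in\dom(\hat f(Y\cap n))\}$ converges downward to the global minimum over all of $[Y]^{<\omega}\cap X_k$, which is precisely $f_Y(k)=f(Y)(k)$; here one takes $X=\omega$ as in the worked examples. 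The stabilization noted in \Cref{remark} is automatic since $\max(t)$ ranges over a set with a least element.

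The step I expect to be the main obstacle is condition \ref{Tukey*2}: for every $\langle s,A\rangle\in\mathbb M_U$ and every $m\in\omega$, I must find $t\in[A]^{<\omega}$ with $m\in\dom(\hat f(s\cup t))$, i.e.\ with $[s\cup t]^{<\omega}\cap X_m\neq\emptyset$. The issue is that the witnessing sequence only guarantees $X_m\in(U^{<\omega})^+$, which says $[A]^{<\omega}\cap X_m\neq\emptyset$ for every $A\in U$ but not that such a witness sits below a prescribed finite part; so the definition of $\hat f$ may need a small adjustment to absorb the stem $s$. The clean fix is to build into $\hat f$ a dependence that lets the finite part $s$ be ignored for membership while still respecting monotonicity---for instance, declaring $k\in\dom(\hat f(s))$ whenever $[s]^{<\omega}\cap X_k\ne\emptyset$ and then checking that, because each $X_m$ is $U^{<\omega}$-positive and $A\in U$, we can choose a finite $t\subseteq A$ with $[t]^{<\omega}\cap X_m\ne\emptyset$, whence $[s\cup t]^{<\omega}\cap X_m\ne\emptyset$ and $m\in\dom(\hat f(s\cup t))$. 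Positivity of $X_m$ is exactly the hypothesis that makes such a $t\subseteq A$ exist, so \ref{Tukey*2} reduces to unwinding the definition of $(U^{<\omega})^+$.

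Once all four clauses are verified, the lemma follows directly from \Cref{Tukey*}, and I would close by remarking that this strengthening is what feeds into the remaining implication (the reverse direction, \ref{finalstep}) of \Cref{thm:main characterization}. Throughout, the only genuinely new ingredients beyond \Cref{prop: the easy direction} are the bookkeeping definition of $\hat f$ and the positivity argument for \ref{Tukey*2}; the monotonicity and cofinality of $f$ are inherited verbatim.
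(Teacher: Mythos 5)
Your proof is correct, but it takes a genuinely different route from the paper's. The paper works directly with the forcing definition of Canjarness: it fixes an $\mathbb{M}_U$-name $\dot{f}$ for a dominating real, extracts (via the pigeonhole argument of Hru\v{s}\'ak--Minami's Theorem 3.8) a single stem $s$, a threshold $n^*$, and a dominating family $\mathcal{F}$ with $\langle s,F_g\rangle\Vdash \forall m\geq n^*\,\dot{f}(m)>g(m)$, and then defines both $f_B(n)$ and $\hat{f}(t)(m)$ as minima of values of $\dot{f}$ \emph{forced} by conditions $\langle s\cup t',C\rangle$ with $t'\subseteq B$ (resp.\ $t'\subseteq t$). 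You instead stay on the combinatorial side of the Hru\v{s}\'ak--Minami equivalence \ref{thm:main characterizatio3}$\Leftrightarrow$\ref{thm:main characterizatio5}, reusing the witnessing sequence $\langle X_n\rangle\subseteq(U^{<\omega})^+$ from \Cref{prop: the easy direction} and setting $\hat{f}(s)(k)=\min\{\max(u)\mid u\in[s]^{<\omega}\cap X_k\}$. All four clauses check out: \ref{Tukey*1} and \ref{Tukey*3} are immediate from $[s]^{<\omega}\subseteq[t]^{<\omega}$ and $[Y]^{<\omega}=\bigcup_n[Y\cap n]^{<\omega}$ (with $X=\omega$), and your resolution of \ref{Tukey*2} is right: positivity of $X_m$ gives some $u\in X_m\cap[A]^{<\omega}$, and taking $t=u$ yields $u\in[s\cup t]^{<\omega}\cap X_m$, so $m\in\dom(\hat{f}(s\cup t))$; no witness ``below the stem'' is needed. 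Your argument is arguably more elementary and self-contained (no forcing names appear), and it makes the two directions of \Cref{lem:notCanjarcharacterization} pleasingly symmetric, both running through the same sequence $\langle X_n\rangle$; the paper's version has the advantage of connecting the approximations $\hat{f}$ directly to decisions of the Mathias forcing, which is the intuition behind \Cref{Tukey*}.

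One technical wrinkle you should patch: with your definition, $\dom(\hat{f}(s))=\{k\mid [s]^{<\omega}\cap X_k\neq\emptyset\}$ is an initial segment of $\omega$ which is \emph{infinite} whenever some $u\in[s]^{<\omega}$ lies in $\bigcap_k X_k$ (which a witnessing sequence may well allow), while the paper's $\omega^{<\omega}$ is declared to be the set of \emph{finite} partial functions. This is easily fixed: replace $X_n$ by $X_n'=\{u\in X_n\mid \max(u)\geq n\}$. Each $X_n'$ is still positive (positivity of $X_n$ applied to $[A\setminus n]^{<\omega}$), the sequence is still $\subseteq$-decreasing, any positive pseudo-intersection of $\langle X_n'\rangle$ would be one of $\langle X_n\rangle$ since the sets removed are finite, and now $\bigcap_n X_n'=\emptyset$, so every $\dom(\hat{f}(s))$ is finite. (The paper's own $\hat{f}$ has the same feature---a single condition can decide $\dot{f}(m)$ for infinitely many $m$---so none of the later arguments actually use finiteness; but if you keep the definition as stated, the pruning costs one line.)
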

\begin{proof}
Assume that $U$ is not Canjar. Let $\dot{f}$ be a $\mathbb{M}_U$-name such that $\mathds{1}_{\mathbb{M}_U}\Vdash ``\dot{f}$ is a dominating real". As in $(1)\Rightarrow (2)$ in \cite[Theorem 3.8]{HrusakMinami2014}, a pigeonhole argument guarantees that for some $n^*\in\omega$ and $s\in [\omega]^{<\omega}$ there is a dominating family $\mathcal{F}\in V$, such that for every $g\in\mathcal{F}$,  \begin{equation}
    \langle s, F_g\rangle\Vdash\forall m\geq n^*, \dot{f}(m)>g(m).
\end{equation}
For some $F_g\in U$.
For $B\in U$, we define $f_B\in\omega^\omega$ as follows. For $n\in\omega$, \begin{equation}\label{lemma3.11preq}
f_B(n)=\min\{m\in\omega\mid(\exists t\in [B\setminus \max(s)+1]^{<\omega})(\exists C\in U) \ (\langle s\cup t,C\rangle\Vdash \dot{f}(n)=m)\}.\end{equation}
Note that in \eqref{lemma3.11preq}, the value forced by $\langle s\cup t,C\rangle$ to be $\dot{f}(n)$ does not depend on the choice of $C\in U$.

Let us first show that the function $f:U\to \omega^\omega$, defined by $f(B)=f_B$ for all $B\in U$, is monotone and cofinal. As monotonicity of $f$ is routine, let us focus on $f$ being cofinal. Take an arbitrary $h\in  \omega^\omega$. Find $g\in\mathcal{F}$ such that $h\leq^* g$. By our assumption on $\mathcal{F}$, there must be some $F_g\in U$ such that $\langle s,F_g\rangle\Vdash (\forall m\geq n^*) \ g(m)\leq \dot{f}(m)$. Hence, for every $m\geq n^*$, and every $t\in [F_g]^{<\omega}$, if there is $C\in U$ such that $\langle s\cup t,C\rangle \Vdash \dot{f}(m)=k$, then we also have $\langle s\cup t,C\cap F_g\rangle \Vdash \dot{f}(m)=k$. But since $\langle s\cup t,C\cap F_g\rangle \leq (s,F_g)$, we must have $k\geq g(m)$; thus $f_{F_g}(m)\geq g(m)$. Therefore, $h\leq^* g\leq^* f(F_g)$ as wanted.

 Let us moreover show that $U$ canonically Tukey above $(\omega^\omega,\leq^*)$ as witnessed by $f$ and $\hat{f}$ by defining an appropriate $\hat{f}: [\omega]^{<\omega}\to \omega^{<\omega}$. To this end, for $t\in [\omega]^{<\omega}$, we define \begin{equation}
z_t=\{m\in\omega\mid (\exists k\in\omega)(\exists t'\subseteq t)(\exists C\in U)\ (\langle s\cup t',C\rangle \Vdash \dot{f}(m)=k)\}.
\end{equation}
For $t\in[\omega]^{<\omega}$, we set $\dom(\hat{f}(t))=z_t$, and for all $m\in z_t$, define \begin{equation}\hat{f}(t)(m)=\min\{k\in\omega \mid (\exists t'\subseteq t)(\exists C) \ (\langle s\cup t',C\rangle\Vdash \dot{f}(m)=k)\}.\end{equation}

Let us check conditions \ref{Tukey*1}-\ref{Tukey*3} of \Cref{Tukey*}. For \ref{Tukey*1}, if $t_0,t_1\in [\omega]^{<\omega}$ and $t_0\subseteq t_1$ holds, then we have $z_{t_0}\subseteq z_{t_1}$. Moreover, for every $m\in z_{t_0}$, the minimum in the definition of $\hat{f}(t_1)(m)$ is taken over more elements than $\hat{f}(t_0)(m)$, so \ref{Tukey*1} is satisfied. To verify \ref{Tukey*2}, fix $m\in\omega$ and $\langle t,A\rangle\in\mathbb{M}_U$. The condition $\langle s\cup t,A\setminus \max(s\cup t)+1\rangle\in\mathbb{M}_U$ has an extension which decides the value $\dot{f}(m)$, in other words, there is $r\in [A\setminus \max(s\cup t)+1]^{<\omega}$ such that $m\in z_{t\cup r}=\dom(\hat{f}(t\cup r))$. Finally, to verify \ref{Tukey*3}, set $X=\omega$ and take $Y\in U$. Let $k\in \omega$ be arbitrary. By Remark \ref{remark}, find some $n_0\in\omega$ such that $\hat{f}(Y\cap m)(k)$ is minimal for every $m\geq n_0$. By definition of $f_Y(k)$, there is some $t\in [Y\setminus \max(s)+1]^{<\omega}$ and $C\in U$ such that $\langle s\cup t, C\rangle \Vdash \dot{f}(k)=l$ for some $l\in\omega$, and $f_Y(k)=l$. Therefore, $t\subseteq Y\cap m$ for some $m\geq n_0,\max(t)$, and so $f_Y(k)=\hat{f}(Y\cap m)(k)$.
\end{proof}

\begin{lem}\label{lem:notCanjarcharacterization}
For any ultrafilter $U$ on $\omega$, the following conditions are equivalent:
\begin{enumerate}[label=(\roman*)]
    \item $U$ is not Canjar. \label{lem:notCanjarcharacterization1}
    \item  $(U,\supseteq)$ is canonically Tukey above $(\omega^\omega, \leq^*)$. \label{lem:notCanjarcharacterization2}
\end{enumerate}
\end{lem}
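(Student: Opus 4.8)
The forward implication \ref{lem:notCanjarcharacterization1}$\Rightarrow$\ref{lem:notCanjarcharacterization2} is precisely the lemma proved just above, so the work is to establish \ref{lem:notCanjarcharacterization2}$\Rightarrow$\ref{lem:notCanjarcharacterization1}: from a canonical Tukey structure $(f,\hat f)$ witnessing that $(U,\supseteq)$ is canonically Tukey above $(\omega^\omega,\le^*)$, I will produce a name for a dominating real in $\mathbb{M}_U$, which shows $U$ is not Canjar. Let $\dot M$ denote the Mathias generic real, with increasing enumeration $m_0<m_1<\cdots$, and for $k<\omega$ let $\dot M_k=\dot M\setminus\{m_0,\dots,m_{k-1}\}$ be the tail obtained by deleting the first $k$ elements. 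The candidate dominating real is
$$\dot g(k)=\min\{\hat f(\dot M_k\cap n)(k)\mid n<\omega,\ k\in\dom\hat f(\dot M_k\cap n)\}.$$
By condition \ref{Tukey*1} the values $\hat f(\dot M_k\cap n)(k)$ are non-increasing in $n$ wherever defined, and since every tail of a Mathias real is again $\mathbb{M}_U$-generic, condition \ref{Tukey*2} together with genericity guarantees $k\in\dom\hat f(\dot M_k\cap n)$ for all large $n$; hence $\dot g(k)$ is a well-defined natural number for each $k$.

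The plan is then to show that for each $h\in\omega^\omega\cap V$ the set of conditions forcing $\dot g\ge^* h$ is dense, which yields $\mathds{1}_{\mathbb{M}_U}\Vdash\dot g\ge^* h$ and hence that $\dot g$ dominates the ground model. Fix $p=\langle s_0,A_0\rangle$ and $h$. Using that $f$ is cofinal and monotone (condition \ref{Tukey*0}), choose $Z\in U$ with $f(Z)\ge^* h$ and set $Y=Z\cap A_0\cap X\cap(\max s_0,\infty)\in U$; monotonicity gives $f(Y)\ge^* f(Z)\ge^* h$. Put $q=\langle s_0,Y\rangle\le p$. The key point is that for $k\ge|s_0|$ the first $|s_0|$ elements of $\dot M$ below $q$ are exactly $s_0$, so $\dot M_k\subseteq \dot M\setminus s_0\subseteq Y$; consequently $\dot M_k\cap n\subseteq Y\cap n$ and, by \ref{Tukey*1}, $\hat f(\dot M_k\cap n)(k)\ge \hat f(Y\cap n)(k)$ whenever the left-hand side is defined (the domains nest the same way). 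Since $Y\subseteq X$, condition \ref{Tukey*3} gives $\hat f(Y\cap n)(k)\ge f(Y)(k)$, and minimizing over $n$ yields $\dot g(k)\ge f(Y)(k)\ge h(k)$ for all large $k$. Thus $q\Vdash \dot g\ge^* h$, as required.

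The step I expect to be the crux is the handling of the finite stem $s_0$. By \ref{Tukey*1}, adjoining elements to a set only decreases $\hat f$, so the naive candidate $\min_n\hat f(\dot M\cap n)(k)$ built from the full generic can be driven down — indeed collapsed to $0$ — as soon as a ``trap'' element lands in the stem, and no choice of reservoir $Y$ can repair this, since a stem cannot be discarded once it is fixed. Deleting the growing prefix $\{m_0,\dots,m_{k-1}\}$ in the definition of $\dot g$ is exactly what neutralizes this obstruction: for $k$ past the length of the stem, the computation of $\dot g(k)$ no longer sees $s_0$ and takes place entirely inside the reservoir $Y$, where the genuine cofinality of $f$ supplies domination. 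The routine points then remaining are to confirm that the tails $\dot M_k$ are themselves $\mathbb{M}_U$-generic (so that $\dot g$ is everywhere defined via \ref{Tukey*2}) and that the monotonicity and \ref{Tukey*3} inequalities combine as claimed.
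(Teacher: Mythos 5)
Your proof is correct, and for the substantive direction \ref{lem:notCanjarcharacterization2}$\Rightarrow$\ref{lem:notCanjarcharacterization1} it takes a recognizably different route from the paper's, even though both rest on the same mechanism: compare, via condition \ref{Tukey*1}, the values of $\hat f$ along initial segments of a set derived from the generic with the values along initial segments of a ground-model $Y\in U\restriction X$, then pass to $f(Y)$ via \ref{Tukey*3} and finish with cofinality of $f$. The difference lies in how the stem is neutralized and how the dominating real is produced. The paper works semantically in $V[G]$: it enumerates $[\omega]^{<\omega}=\langle t_n\mid n\in\omega\rangle$, forms the corrected sets $B_n=t_n\cup (X_G\setminus \max(t_n)+1)$, defines one function $h_n$ for each $n$ by the same minimization you use, and takes as its dominating real any $h\in V[G]$ with $h\geq^* h_n$ for all $n$; given $g\in \omega^\omega\cap V$, it chooses $B\in U$ with $f(B)\geq^* g$, an $N$ with $X_G\setminus N\subseteq B$, and the index $n$ with $t_n=B\cap N$, so that $B_n\subseteq B$ and $h\geq^* h_n\geq f(B)\geq^* g$. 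You instead build a single name $\dot g$, neutralizing the stem by deleting the first $k$ elements of the generic in the computation of $\dot g(k)$, and verify domination by a density argument over conditions. Your version is more economical (one name, no auxiliary enumeration, no step of dominating countably many functions of $V[G]$); the paper's version needs genericity only through the standard fact that $X_G\subseteq^* B$ for every $B\in U$, and so avoids both the density argument and any claim about tails of the generic.

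That last point is the one caveat: your totality argument for $\dot g$ leans on the assertion that every cofinite tail of an $\mathbb{M}_U$-generic real is again $\mathbb{M}_U$-generic over $V$. This is true, but it is not free; it requires its own density argument (in $V$, given a dense open $D$ and $k$, consider the conditions whose stem has length at least $k$ and which belong to $D$ after the first $k$ stem elements are deleted, and check this set is dense). You can bypass it entirely by proving totality of $\dot g$ directly, mirroring the paper's totality argument for its $h_n$: given $\langle s,A\rangle$ and $k$, extend the stem to have length at least $k$, apply condition \ref{Tukey*2} to the condition whose stem is $s$ with its first $k$ elements removed (still a legitimate condition, as the reservoir lies above $\max(s)$), and extend accordingly. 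Either repair is short, so this is a presentational gap rather than a mathematical one.
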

\begin{proof}
We have already proved the direction \ref{lem:notCanjarcharacterization1} $\Rightarrow$ \ref{lem:notCanjarcharacterization2}. Assume that $U$ is canonically Tukey above $(\omega^\omega,\leq^*)$ as witnessed by some $f$ and $\hat{f}$. In particular, $f,\hat{f}$ satisfy \ref{Tukey*0}-\ref{Tukey*3} of \Cref{Tukey*}. To see that $U$ is not Canjar, let $G$ be a generic filter for $\mathbb{M}_U$, and let $X_G$ denote the Mathias generic real for $\mathbb{M}_U$. Enumerate $[\omega]^{<\omega}=\langle t_n\mid n\in\omega\rangle$. In $V[G]$, set $B_n=t_n\cup X_G\setminus \max(t_n)+1$, and define a sequence of functions $\langle h_n : n\in\omega\rangle\subseteq \omega^\omega$ as follows: For $n,k\in\omega$, define \begin{equation}
h_n(k)=\min\{\hat{f}(B_n\cap m)(k)\mid m\in\omega, \ k\in \dom(\hat{f}(B_n\cap m))\}. 
\end{equation}
To see that $h_n$ is total, we proceed with a density argument. Let $m\in\omega$ and fix a condition $\langle s,A\rangle \in\mathbb{M}_U$. As $\langle t_n\cup (s\setminus \max(t_n)+1), A\setminus\max(t_n\cup s)+1\rangle\in\mathbb{M}_U$, by \ref{Tukey*2} of \Cref{Tukey*}, 
$$\exists t\in [A\setminus (\max(t_n\cup s)+1)]^{<\omega}\text{ such that }m\in\dom(\hat{f}(t_n\cup (s\setminus \max(t_n)+1)\cup t)).$$ Extend $\langle s, A\rangle$ to $\langle s\cup t,A\rangle$. We have $$\langle s\cup t,A\rangle \Vdash B_n\cap \max(t)+1=t_n\cup (s\setminus \max(t_n)+1)\cup t,$$ and therefore $\langle s\cup t,A\rangle \Vdash m\in \dom(h_n)$. 
    
Knowing that $h_n\in\omega^\omega\cap V[G]$ for all $n\in\omega$, we can find some $h\in V[G]$ such that $h_n\leq^* h$, for all $n\in\omega$. We claim that $h\in V[G]$ is a dominating real over $\mathbf{V}$. To see this, let $g\in \omega^\omega\cap \mathbf{V}$ be arbitrary, and find $B\in U$ such that $f(B)\geq^* g$. Find $N\in \omega$ such that $X_G\setminus N\subseteq B$. There is some $n\in\omega$ such that $t_n=B\cap N$, and therefore $B_n=(B\cap N)\cup (X_G\setminus N)\subseteq B$. Since $B_n\cap k\subseteq B\cap k$ for every $m\in\omega$, \ref{Tukey*1} of \Cref{Tukey*} implies that $\hat{f}(B_n\cap m)\geq \hat{f}(B\cap m)$ for every $m\in\omega$. In particular, whenever $k\in \dom(\hat{f}(B_n\cap m))$ for some $m\in\omega$, we also have $k\in \dom(\hat{f}(B\cap m))$ and $\hat{f}(B_n\cap m)(k)\geq \hat{f}(B\cap m)(k)$. This means that $h_n(k)\geq  f(B)(k)$ for all $k\in\omega$. Hence $h\geq^* h_n\geq f(B)\geq^* g$.
\end{proof}
Let us now prove the remaining implication of \Cref{thm:main characterization} (Namely \ref{thm:main characterizatio2}$\Rightarrow$\ref{thm:main characterizatio1}):
\begin{prop}\label{finalstep}
For any ultrafilter $U$ on $\omega$, if $(U,\supseteq)\geq_T(\omega^\omega, \leq)$ then $U$ is not Canjar.
\end{prop}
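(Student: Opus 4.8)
The plan is to deduce Proposition~\ref{finalstep} from Lemma~\ref{lem:notCanjarcharacterization}: since that lemma identifies ``not Canjar'' with ``canonically Tukey above $(\omega^\omega,\le^*)$'', it suffices to show that the weaker hypothesis $(U,\supseteq)\ge_T(\omega^\omega,\le)$ already produces a canonical witness in the sense of Definition~\ref{Tukey*}. So assume $(U,\supseteq)\ge_T(\omega^\omega,\le)$. First I would fix a witnessing map and normalize it: a Tukey reduction between directed posets is always witnessed by a monotone cofinal map, so fix monotone cofinal $\phi\colon(U,\supseteq)\to(\omega^\omega,\le)$; composing with the monotone cofinal identity $(\omega^\omega,\le)\to(\omega^\omega,\le^*)$ makes $\phi$ monotone cofinal into $(\omega^\omega,\le^*)$ as well, and replacing $\phi(A)(k)$ by $\max_{j\le k}\phi(A)(j)$ lets me assume each $\phi(A)$ is nondecreasing. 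This already supplies condition~\ref{Tukey*0} for the map $f:=\phi$, with the set $X$ of \ref{Tukey*3} taken to be $\omega$.

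The heart of the argument is to manufacture the finite side $\hat f\colon[\omega]^{<\omega}\to\omega^{<\omega}$ so that \ref{Tukey*1}--\ref{Tukey*3} hold and the function it determines through \ref{Tukey*3} is cofinal. I would build $\hat f$ as an enumeration-type family of finite approximations: let $\dom(\hat f(s))$ grow with $s$ and read the values of $\hat f(s)$ off $\phi$ on canonical $U$-extensions of $s$, arranged so that $\hat f$ is $\subseteq$-decreasing in the sense of \ref{Tukey*1} (larger finite sets give pointwise-smaller values on larger domains) and so that, along any increasing chain of initial segments $Y\cap n$, the values $\hat f(Y\cap n)(k)$ are eventually constant. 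Once $\hat f$ is $\subseteq$-decreasing, the $f$ defined by the minimum in \ref{Tukey*3} is automatically monotone and, by Remark~\ref{remark}, well defined and stabilizing; hence the only substantive points left are the covering condition~\ref{Tukey*2} and cofinality of $f$. Condition~\ref{Tukey*2} I would verify by the same density/extension argument used in the proofs above: given $\langle s,A\rangle\in\mathbb{M}_U$ and $m$, extend the stem inside $A$ until level $m$ enters the growing domain. Cofinality I would extract from cofinality of $\phi$: given $g$, pick $A$ with $\phi(A)\ge g$ and argue that the stabilized value of $\hat f$ along the initial segments of $A$ dominates $g$ off a bounded set of coordinates. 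With $(f,\hat f)$ in hand, $U$ is canonically Tukey above $(\omega^\omega,\le^*)$, and Lemma~\ref{lem:notCanjarcharacterization} yields that $U$ is not Canjar.

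The step I expect to be the main obstacle is reconciling the monotonicity \ref{Tukey*1} with cofinality. An arbitrary cofinal $\phi$ need not be continuous, i.e.\ $\phi(Y)$ need not be determined by finite initial segments of $Y$, whereas \ref{Tukey*3} forces $f$ to be reconstructed from the finite data $\hat f(Y\cap n)$; every naive aggregate of $\phi$ over sets with a prescribed initial segment turns out to have the \emph{wrong} monotonicity, while the inclusion-preserving completions that give the right monotonicity cap $\phi$ and destroy cofinality. Turning $\phi$ into an enumeration-type, initial-segment-stabilizing form without losing cofinality is therefore the real content, and it is precisely here that the flexibility of the eventual-domination order $\le^*$—which ignores any bounded set of coordinates and finite modifications of the underlying set—must be exploited. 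As a cross-check I would also keep the Hru\v{s}\'ak--Minami route \ref{thm:main characterizatio5} in mind: from $\phi$ build a $\subseteq$-decreasing sequence $\langle X_n\rangle\subseteq(U^{<\omega})^+$ whose level-$n$ spreading threshold is governed by $\phi$, and show that any positive pseudo-intersection would yield a single function dominating the cofinal family $\{\phi(A):A\in U\}$, contradicting cofinality; this exhibits $U^{<\omega}$ as non-$P^+$, hence $U$ as non-Canjar.
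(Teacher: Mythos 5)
You have correctly identified the central obstacle, but your proposal never overcomes it: everything hinges on manufacturing $\hat f$ from an arbitrary monotone cofinal $\phi\colon(U,\supseteq)\to(\omega^\omega,\leq)$, and you yourself concede that every naive finitization either has the wrong monotonicity for \ref{Tukey*1} or caps $\phi$ and destroys cofinality, offering only the hope that ``the flexibility of $\leq^*$'' will rescue the construction. It will not, by itself; what is missing is precisely the two-step reduction in the paper's proof. First, reduce to the case that $U$ is a $p$-point --- this costs nothing, since Canjar ultrafilters are $p$-points (Proposition \ref{Prop: Canjar}), so a non-$p$-point is vacuously not Canjar. Your plan, as written, attempts the construction for an arbitrary ultrafilter, where no continuity theory for monotone maps is available; indeed, the paper never extracts a canonical witness from a Tukey map for non-$p$-points (for those, the witness in Lemma \ref{lem:notCanjarcharacterization} comes from the forcing-name argument, not from a Tukey reduction). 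Second, once $U$ is a $p$-point, the finitary handle you are missing is supplied by Dobrinen--Todorcevic: $(U,\supseteq)\geq_T(\omega^\omega,\leq)$ upgrades to $U\equiv_T U^\omega$ \cite[Thm. 35]{Dobrinen/Todorcevic11}, and every monotone map from a $p$-point into $\prod_{n\in\omega}P(\omega)$ is continuous on a cone, i.e., generated by a monotone $\psi\colon[\omega]^{<\omega}\to[\omega\times\omega]^{<\omega}$ applied to initial segments (\Cref{lem:continuouscofinalmaps}). Taking such a continuous monotone cofinal $\varphi\colon U\to U^\omega$ and setting $f(Y)(n)=\min(\varphi(Y)_n)$ and $\hat f(s)(n)=\min(\psi(s)_n)$ dissolves exactly the tension you describe: $\subseteq$-monotonicity of $\psi$ yields condition \ref{Tukey*1} because minima over larger finite sets can only decrease, while cofinality is immediate because, for $g\in\omega^\omega$, the sequence of final segments $\langle(g(n),\omega)\mid n\in\omega\rangle$ lies in $U^\omega$ and $\varphi$ is cofinal there.

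Your fallback route through Hru\v{s}\'ak--Minami (item \ref{thm:main characterizatio5}) stalls at the same point. To define a decreasing sequence $\langle X_n\mid n<\omega\rangle\subseteq (U^{<\omega})^+$ whose thresholds are ``governed by $\phi$,'' you must read values of $\phi$ off finite sets: if you set, say, $X_n=\{s\in\FIN\mid(\exists A\in U)(s\subseteq A\wedge\max(s)\geq\phi(A)(n))\}$, positivity and monotonicity are easy, but a positive pseudo-intersection yields no contradiction with cofinality, because the witnessing $A$ varies with $s$; replacing the existential quantifier by an infimum or supremum over all $A\supseteq s$ reintroduces exactly the wrong-direction monotonicity you flagged. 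In short, both of your routes require a continuity property of the reduction that does not hold for arbitrary cofinal maps, and the $p$-point reduction together with \Cref{lem:continuouscofinalmaps} is the actual mathematical content that fills this gap.
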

\begin{proof}   
Suppose that $(U,\supseteq)\geq_T(\omega^\omega, \leq)$. If $U$ is not a $p$-point, then $U$ is necessarily not Canjar; thus we may assume that $U$ is a $p$-point. We will show that for a $p$-point $U$, $(U,\supseteq)\geq_T(\omega^\omega, \leq)$ implies that $U$ is canonically Tukey above $(\omega^\omega, \leq^*)$, which will be enough to finish by the equivalence from Lemma \ref{lem:notCanjarcharacterization}.

Since $(U,\supseteq)\geq_T(\omega^\omega, \leq)$, by Dobrinen and Todorcevic's result \cite[Thm. 35]{Dobrinen/Todorcevic11}, we have $U\equiv_T U\cdot U\equiv_T U^{\omega}$. Again, by Dobrinen and Todorcevic \cite[Thm. 20]{Dobrinen/Todorcevic11}, there is a monotone and cofinal\footnote{Here, $U^{\omega}:=\prod_{n\in\omega}U$ is ordered via pointwise reverse inclusion.} $\varphi:U\to U^\omega$ continuous on $\omega\times \omega$ (see \Cref{lem:continuouscofinalmaps}), $\psi:[\omega]^{<\omega}\to [\omega\times\omega]^{<\omega}$ monotone, and some $X\in U$, such that for every $Y\in U\restriction X$, we have $\varphi(Y)=\bigcup_{n\in\omega}\psi(Y\cap n)$. Next, define the functions $f$ and $\hat{f}$. For $Y\in U$, define $$f(Y):\omega\to\omega\text{ by }f(Y)(n)=\min(\varphi(Y)_n),$$ and define $\hat{f}:[\omega]^{<\omega}\to \omega^{<\omega}$ by setting $$\dom(\hat{f}(s))=\{n\in\omega \mid \psi(s)_n\neq\emptyset\}\text{ and defining }\hat{f}(s)(n)=\min(\psi(s)_n).$$
To see that $f$, let $g\in\omega^\omega$ be arbitrary, and note that the sequence of final intervals $\langle(g(n),\omega)\mid n\in\omega\rangle$ is in $U^\omega$. Therefore there is some $Y\in U$ such that $\varphi(Y)_n\subseteq(g(n),\omega)$ for all $n$, hence $\min(\varphi(X)_n)\geq g(n)$.

Next we check \ref{Tukey*1}-\ref{Tukey*3} of \Cref{Tukey*}. For \ref{Tukey*1}, note that by monotonicity, if $s\subseteq t$, then $\psi(s)\subseteq \psi(t)$ holds. Therefore, $\dom(\hat{f}(s))\subseteq \dom(\hat{f}(t))$, and for every $n\in \dom(\hat{f}(s))$, we have $\hat{f}(s)(n)\geq \hat{f}(t)(n)$. For \ref{Tukey*2}, let $\langle s,A\rangle\in\mathbb{M}_U$ be any condition, and let $m\in\omega$ be arbitrary. Consider $\varphi(s\cup A)\in U^\omega$. We can find some large enough $k\geq m$ so that $\varphi(s\cup A)_m\cap k\neq\emptyset$. Hence, there is some $r\in\omega$ such that for $t=A\cap r$, we have $\psi(s\cup t)\cap (k\times k)=\varphi(s\cup A)\cap (k\times k)$. In particular, $\psi(s\cup t)_m\cap k\neq \emptyset$, and so $m\in \dom(\hat{f}(s\cup t))$. Condition \ref{Tukey*3} is  similar and from the definitions of $f$, $\hat{f}$ and the continuity.
\end{proof}
We include the following lemma; its proof  is essentially the proof of \cite[Thm. 20]{Dobrinen/Todorcevic11}, due to Dobrinen and Todorcevic.
Identify each sequence $\langle A_n\mid n\in\omega\rangle\in \prod_{n\in\omega}P(\omega)$ with a subset of $\omega\times \omega$ in the canonical way, and recall that $\prod_{n\in\omega}P(\omega)$ is ordered via pointwise reverse inclusion.
\begin{lem}\label{lem:continuouscofinalmaps}
Let $U$ be a $p$-point on $\omega$. Suppose that $\varphi:U\to \prod_{n\in\omega}P(\omega)$ is a monotone map. Then, there is some $X_*\in U$, and $\psi:[\omega]^{<\omega}\to [\omega\times \omega]^{<\omega}$ such that:
\begin{enumerate}[label=(\roman*)]
    \item $\varphi(Y)=\bigcup_{n\in\omega}\psi(Y\cap n)$, for any $Y\in U\restriction X_*$.
    \item $\psi$ is monotone.
\end{enumerate}
\end{lem}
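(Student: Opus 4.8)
The plan is to prove a continuity statement: that on a suitable $X_*\in U$ the set $\varphi(Y)$ is reconstructible from the finite initial segments $Y\cap n$, and then to read off $\psi$. Identifying $\prod_{n}P(\omega)$ with $P(\omega\times\omega)$ as indicated before the statement, the hypothesis says precisely that $\varphi$ is $\subseteq$-monotone: $Y\subseteq Y'$ (both in $U$) implies $\varphi(Y)\subseteq\varphi(Y')$. The desired conclusion $\varphi(Y)=\bigcup_n\psi(Y\cap n)$ with $\psi$ monotone and finite-valued amounts to saying that, for $Y\in U\restriction X_*$, membership of each pair $(m,a)\in\omega\times\omega$ in $\varphi(Y)$ is decided by some finite initial segment of $Y$, and that each such segment decides only finitely many pairs positively. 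So the entire content is a continuity-by-fusion argument.

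The basic building block is a one-bit dichotomy. Fix a stem $s\in[\omega]^{<\omega}$, a reservoir $B\in U$ with $\min B>\max s$, and a pair $(m,a)$. Applying $\subseteq$-monotonicity to $Y\mapsto \mathbf{1}[(m,a)\in\varphi(s\cup Y)]$ over $\{Y\in U:Y\subseteq B\}$ yields exactly one of two cases: either $(m,a)\in\varphi(s\cup Y)$ for every $Y\in U$ with $Y\subseteq B$ (the pair is \emph{forced} by $s$ over $B$); or there is $B'\in U$, $B'\subseteq B$, with $(m,a)\notin\varphi(s\cup B')$, in which case monotonicity gives $(m,a)\notin\varphi(s\cup Y)$ for all $Y\in U$ with $Y\subseteq B'$ (the pair is \emph{avoidable}). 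Indeed, if the avoidable case fails then $(m,a)\in\varphi(s\cup B')$ for every $B'\subseteq B$ in $U$, and taking $B'=Y$ shows the forced case holds.

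With this in hand I would run a fusion. I build a decreasing sequence $B_0\supseteq B_1\supseteq\cdots$ in $U$ together with $x_0<x_1<\cdots$, where $x_i=\min B_i$ and $B_{i+1}\subseteq B_i\setminus(x_i+1)$. At stage $i$ I consider the finitely many candidate stems $s\subseteq\{x_0,\dots,x_{i-1}\}$ and the finitely many pairs $(m,a)\in[0,i]^2$, and apply the dichotomy to each pair $(s,(m,a))$ successively, shrinking the reservoir finitely many times so that the resulting $B_i$ simultaneously witnesses avoidance for every avoidable pair while leaving each forced pair forced. Since $U$ is a $p$-point, the $\fin$-p.i.p.\ provides $X_*\in U$ that is a pseudo-intersection of $\langle B_i\mid i<\omega\rangle$. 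For $Y\in U\restriction X_*$, the tail of $Y$ beyond any fixed $x_i$ lies almost inside $B_i$, so the decision recorded at the stem $s=Y\cap(x_i+1)$ is the correct one for $Y$: if $s$ forces $(m,a)$ then $(m,a)\in\varphi(Y)$, and if $s$ makes it avoidable then $(m,a)\notin\varphi(Y)$. This is the continuity I am after, and the main obstacle is precisely making the stem-decisions cohere along the tree and survive the passage to the pseudo-intersection: the p.i.p.\ gives only almost-containment, so finitely many ``stray'' elements below some $x_j$ must be absorbed into a longer stem rather than being allowed to flip a decision.

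Finally I would set $\psi(s)=\{(m,a)\in[0,\max s]^2:\ (m,a)\text{ is forced at the stem }s\}$. This is finite, and since forcing a pair propagates upward as the stem grows (by monotonicity of $\varphi$), $\psi$ is $\subseteq$-monotone, giving condition (ii). For $Y\in U\restriction X_*$ one checks $\psi(Y\cap n)\subseteq\varphi(Y)$ directly from the forced case, and conversely that each $(m,a)\in\varphi(Y)$ enters $\psi(Y\cap n)$ once $n$ exceeds both $\max(m,a)$ and the level at which $(m,a)$ is decided; the restriction to the boxes $[0,\max s]^2$ only delays entry, never loses a pair, since these boxes exhaust $\omega\times\omega$ along $Y$. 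This yields $\varphi(Y)=\bigcup_n\psi(Y\cap n)$, which is condition (i).
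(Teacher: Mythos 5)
Your dichotomy (forced versus avoidable, read off from monotonicity of $\varphi$) and your overall plan (fusion, then a p-point pseudo-intersection, then $\psi(s)=$ the forced pairs in a box) are exactly the ingredients of the paper's proof, which follows Dobrinen--Todorcevic. But there is a genuine gap, and it sits precisely at the spot you yourself flag as ``the main obstacle.'' First, your fusion only processes stems $s\subseteq\{x_0,\dots,x_{i-1}\}$, i.e., subsets of the diagonal of minima. A p-point gives a pseudo-intersection $X_*$ of $\langle B_i\mid i<\omega\rangle$, but not one contained in $\{x_i : i\in\omega\}$; that kind of diagonalization is available for selective ultrafilters, not for general p-points. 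So for $Y\in U\restriction X_*$ the initial segments $Y\cap n$ are in general not among the stems you ever processed, and ``the decision recorded at the stem $s=Y\cap(x_i+1)$'' may simply not exist. The fix is what the paper does: at stage $n$ decide \emph{all} stems $s\subseteq n$ (all subsets of the interval $[0,n)$) against all pairs in $n\times n$; this is still only finitely much shrinking per stage.

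Second, even after that fix, ``absorb the strays into a longer stem'' does not close the argument, and this is where the real work lies. The pseudo-intersection gives only $X_*\setminus k_n\subseteq B_n$ for some $k_n$ typically much larger than $n$. For a \emph{forced} pair this is harmless: discard the strays, apply forcedness to the tail $Y\setminus k_n\subseteq B_n$ (which is in $U$), and push back up into $\varphi(Y)$ by monotonicity; so $\bigcup_n\psi(Y\cap n)\subseteq\varphi(Y)$ survives. But the converse inclusion requires: if $(m,a)$ is avoidable at every stem $Y\cap n$, then $(m,a)\notin\varphi(Y)$. Avoidability at $Y\cap n$ only controls $\varphi\left((Y\cap n)\cup Y'\right)$ for $Y'\subseteq B_n$, and $Y\setminus n\not\subseteq B_n$ because of the strays in $[n,k_n)$; absorbing them into the stem $Y\cap k_n$ merely moves the problem, since that stem's decisions are relative to the even smaller reservoir $B_{k_n}$, which contains the tail of $Y$ only past $k_{k_n}$, and so on --- the recursion never terminates. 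The paper resolves this mismatch with explicit bookkeeping: choose $m_0<m_1<\cdots$ so that $X_\infty\setminus m_{j+1}\subseteq X_i$ for all $i\le m_j$, let $Z$ be the union of the intervals $[m_{2j},m_{2j+1})$ (without loss of generality $Z\in U$, else shift the indexing), and set $X_*=X_\infty\cap Z$. Deleting the gap intervals makes stem and reservoir align exactly at the stages $n=m_{2j+1}$: for $Y\in U\restriction X_*$ one has $Y\cap m_{2j+1}=Y\cap m_{2j+2}$ and $Y\setminus m_{2j+1}\subseteq X_{m_{2j+1}}$, which is precisely what both directions of $\varphi(Y)=\bigcup_n\psi(Y\cap n)$ need. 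Without this alignment device (or an equivalent one) your argument does not go through, and it also leaves $\psi$ undefined on the stems that actually occur.
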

\section{Applications}\label{Section: Applications}
Using the characterization of Theorem~\ref{maintheorem}, we are able to improve some known results (see Proposition~\ref{Prop: Canjar}). The first regards the structure of the class of Canjar ultrafilters. 
\begin{cor}
    If $U$ is Canjar and $U\geq_T V$, then $V$ is also Canjar.
\end{cor}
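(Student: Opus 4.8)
The plan is to derive this corollary directly from the equivalence \ref{thm:main characterizatio2}$\Leftrightarrow$\ref{thm:main characterizatio3} of \Cref{thm:main characterization}, which identifies the Canjar property with the failure of $(\omega^\omega,\leq)$ being Tukey below the ultrafilter. Since this equivalence converts the analytic/forcing-theoretic Canjar condition into a purely order-theoretic one, and since Tukey reducibility is transitive by definition (cofinal maps compose), the whole argument reduces to a one-line transitivity chase. Note that this statement genuinely strengthens \Cref{Prop: Canjar}(1), because $U\geq_{RK}V$ implies $U\geq_T V$ but not conversely.

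I would argue by contraposition on $V$. Suppose $V$ is not Canjar. Then by the implication \ref{thm:main characterizatio3}$\Rightarrow$\ref{thm:main characterizatio2} of \Cref{thm:main characterization} (equivalently, by \Cref{prop: the easy direction}), we have $(V,\supseteq)\geq_T(\omega^\omega,\leq)$. Combining this with the hypothesis $(U,\supseteq)\geq_T(V,\supseteq)$ and using transitivity of the Tukey order yields
$$(U,\supseteq)\geq_T(\omega^\omega,\leq).$$
Applying the reverse implication \ref{thm:main characterizatio2}$\Rightarrow$\ref{thm:main characterizatio3} of \Cref{thm:main characterization} (equivalently, \Cref{finalstep}), we conclude that $U$ is not Canjar, contradicting the assumption. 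Hence $V$ is Canjar.

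There is essentially no obstacle here; the entire content of the corollary is packaged into the Tukey characterization of Canjar ultrafilters already proven. The only point that requires any care is ensuring the hypothesis $U\geq_T V$ is read as $(U,\supseteq)\geq_T(V,\supseteq)$ in the poset sense, so that transitivity applies cleanly against the witness $(V,\supseteq)\geq_T(\omega^\omega,\leq)$. Once that is fixed, the argument is immediate.
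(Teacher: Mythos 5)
Your proof is correct and is exactly the argument the paper intends: the corollary is stated there without proof, as an immediate consequence of Theorem \ref{thm:main characterization} (the equivalence of \ref{thm:main characterizatio2} and \ref{thm:main characterizatio3}) together with transitivity of the Tukey order, which is precisely your contrapositive chase. One cosmetic slip: the two implication labels are swapped---``$V$ not Canjar $\Rightarrow (V,\supseteq)\geq_T(\omega^\omega,\leq)$'' is the contrapositive of \ref{thm:main characterizatio2}$\Rightarrow$\ref{thm:main characterizatio3} (i.e.\ Proposition \ref{prop: the easy direction}), while your final step is the contrapositive of \ref{thm:main characterizatio3}$\Rightarrow$\ref{thm:main characterizatio2} (i.e.\ Proposition \ref{finalstep})---but since you cite the correct propositions and the theorem is a full equivalence, this changes nothing.
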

The second regards the $RK$-predecessors of a Canjar ultrafilter. The first author constructed extension of the class of rapid ultrafilters called \textit{$\alpha$-almost rapid} (see \cite[Definition 4.11]{TomCommute}), all Tukey above $(\omega^\omega,\leq)$ (see \cite[Proposition 4.13]{TomCommute}) and proved that consistently there are $p$-point which are $\alpha$-almost rapid but not rapid (see \cite[Theorem 4.15]{TomCommute}). 

\begin{cor}
    If $U$ is Canjar and $V$ is $\alpha$-almost rapid for some $\alpha<\omega_1$, then $U\not\geq_T V$.
\end{cor}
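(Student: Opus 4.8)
The plan is to proceed by contradiction, leveraging the main characterization (Theorem~\ref{thm:main characterization}) together with the cited fact that $\alpha$-almost rapid ultrafilters sit Tukey-above $(\omega^\omega,\leq)$. Suppose toward a contradiction that $U\geq_T V$. Since $V$ is $\alpha$-almost rapid for some $\alpha<\omega_1$, we have $(V,\supseteq)\geq_T(\omega^\omega,\leq)$ by \cite[Proposition 4.13]{TomCommute}. The key mechanism is transitivity of the Tukey order: from $U\geq_T V$ and $V\geq_T(\omega^\omega,\leq)$ we would obtain $(U,\supseteq)\geq_T(\omega^\omega,\leq)$.

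This last conclusion is exactly what $U$ being Canjar forbids. Indeed, by the equivalence of \ref{thm:main characterizatio2} and \ref{thm:main characterizatio3} in Theorem~\ref{thm:main characterization} (our main contribution, the equivalence of not being Tukey above $(\omega^\omega,\leq)$ with Canjarness), the hypothesis that $U$ is Canjar gives $(U,\supseteq)\not\geq_T(\omega^\omega,\leq)$. This directly contradicts the displayed consequence of the assumption $U\geq_T V$, and the corollary follows.

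Alternatively, and even more cleanly, I would invoke the preceding corollary on Tukey-downward closure of the Canjar class: if $U$ is Canjar and $U\geq_T V$, then $V$ is Canjar. But an $\alpha$-almost rapid $V$ satisfies $V\geq_T(\omega^\omega,\leq)$, hence by the main characterization $V$ is \emph{not} Canjar, contradiction. I would likely present this version, since it makes the corollary a one-line consequence of the result immediately above it.

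There is no genuine obstacle here; the entire content is transitivity of $\geq_T$ combined with two already-established facts. The only point requiring minor care is bookkeeping the direction of the Tukey order, ensuring that $U\geq_T V$ and $V\geq_T(\omega^\omega,\leq)$ compose in the correct orientation to yield $U\geq_T(\omega^\omega,\leq)$ rather than the reverse.
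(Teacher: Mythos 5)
Your proof is correct and is exactly the argument the paper intends (the paper states this corollary without proof, as an immediate consequence of Theorem~\ref{thm:main characterization}, the cited fact that $\alpha$-almost rapid ultrafilters are Tukey above $(\omega^\omega,\leq)$, and transitivity of $\geq_T$). Either of your two versions — direct transitivity or invoking the Tukey-downward closure corollary — is fine, and the second is indeed the cleaner presentation given its placement in the paper.
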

The theorem also provides simple sufficient condition for Canjarness:
\begin{cor}\label{corollarywithtwoparts}
Let $U$ be an ultrafilter on $\omega$. 
    \begin{enumerate}[label=(\roman*)]
        \item If $\chi(U)<\mathfrak{d}$, then $U$ is Canjar.\label{corollarywithtwoparts1}
        \item If $U$ is a $P_{\mathfrak{b}^+}$-point, then $U$ is Canjar. \label{corollarywithtwoparts2}
    \end{enumerate}
\end{cor}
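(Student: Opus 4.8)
The plan is to prove each part using the main characterization (Theorem~\ref{thm:main characterization}), specifically the equivalence between being Canjar and not being Tukey above $(\omega^\omega,\leq)$. So for each part I would show $(U,\supseteq)\not\geq_T(\omega^\omega,\leq)$, which immediately yields Canjarness by item \ref{thm:main characterizatio3}.

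For part \ref{corollarywithtwoparts1}, suppose $\chi(U)<\mathfrak{d}$; that is, $U$ has a base $\mathcal{B}\subseteq U$ with $|\mathcal{B}|<\mathfrak{d}$. The key observation is that the character bounds the cofinality of $(U,\supseteq)$, so any monotone cofinal map $g:(U,\supseteq)\to(\omega^\omega,\leq)$ would have cofinal image $g''\mathcal{B}$ of size less than $\mathfrak{d}$; but a cofinal subset of $(\omega^\omega,\leq)$ is in particular a dominating family, which must have size at least $\mathfrak{d}$ by definition of $\mathfrak{d}$. This contradiction shows $(U,\supseteq)\not\geq_T(\omega^\omega,\leq)$. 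Concretely, I would invoke the fact that $\mathfrak{d}$ is the additivity/cofinality invariant of $(\omega^\omega,\leq)$ (i.e.\ $\mathrm{cof}(\omega^\omega,\leq)=\mathrm{add}(\omega^\omega,\leq)=\mathfrak{d}$), so that a Tukey reduction from $(\omega^\omega,\leq)$ forces the cofinality of $(U,\supseteq)$ to be at least $\mathfrak{d}$, contradicting $\mathrm{cof}(U,\supseteq)\le\chi(U)<\mathfrak{d}$.

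For part \ref{corollarywithtwoparts2}, suppose $U$ is a $P_{\mathfrak{b}^+}$-point. A $P_{\mathfrak{b}^+}$-point is in particular a $p$-point, so by Theorem~\ref{thm:main characterization} it suffices to show $(U,\supseteq)\not\geq_T(\omega^\omega,\leq)$; equivalently, by Proposition~\ref{finalstep} and its surrounding equivalences, it suffices to show $U$ is Canjar directly, or that $U$ is not Tukey above $(\omega^\omega,\leq)$. The $P_{\mathfrak{b}^+}$-point hypothesis means that any family of fewer than $\mathfrak{b}^+$ (i.e.\ at most $\mathfrak{b}$) sets in $U$ has a pseudo-intersection in $U$. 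I would argue by contradiction: a Tukey reduction $(U,\supseteq)\geq_T(\omega^\omega,\leq)$ gives, via Proposition~\ref{finalstep}, a failure of Canjarness, hence a $\subseteq$-decreasing sequence in $(U^{<\omega})^+$ with no pseudo-intersection together with a dominating-real scale; the natural unbounded map into $(\omega^\omega,\leq)$ then produces, over any set of size $\le\mathfrak{b}$, an unbounded family indexed by a base, and the $P_{\mathfrak{b}^+}$ property lets me thin out to a single pseudo-intersection $X\in U$ whose associated function dominates the whole family, contradicting unboundedness. More cleanly, I would show that the $P_{\mathfrak{b}^+}$ property forces every monotone map $f:(U,\supseteq)\to(\omega^\omega,\le^*)$ to be bounded on a large subfamily, so it cannot be cofinal.

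The main obstacle I anticipate is part \ref{corollarywithtwoparts2}: the interaction between the cardinal $\mathfrak{b}$ (governing $\le^*$-unboundedness) and the $P_{\mathfrak{b}^+}$ pseudo-intersection property is more delicate than the clean cofinality count in part \ref{corollarywithtwoparts1}. The subtlety is that Tukey reducibility is phrased via cofinal maps to $(\omega^\omega,\le)$ (everywhere domination), whereas the pseudo-intersection closure naturally interacts with $\le^*$ (eventual domination), governed by $\mathfrak{b}$ rather than $\mathfrak{d}$. To bridge this, I would use the canonical Tukey reduction machinery of Lemma~\ref{lem:notCanjarcharacterization}: given a witnessing $f,\hat{f}$, the functions $f(X_\alpha)$ for $X_\alpha$ ranging over a base form a $\le^*$-unbounded family of size $\mathfrak{b}$, and then applying the $P_{\mathfrak{b}^+}$ property to pseudo-intersect these base sets yields a single $X\in U$ with $f(X)\ge^* f(X_\alpha)$ for all $\alpha$ by monotonicity, contradicting unboundedness of the family at size exactly $\mathfrak{b}$. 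Making the index arithmetic line up—ensuring the pseudo-intersection hypothesis applies to a family of the correct size $\le\mathfrak{b}<\mathfrak{b}^+$—is the crux of the argument.
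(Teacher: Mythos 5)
Part \ref{corollarywithtwoparts1} of your proposal is correct and is essentially the paper's argument (in contrapositive form): a Tukey reduction preserves cofinality, $\mathrm{cof}(U,\supseteq)=\chi(U)$ and $\mathrm{cof}(\omega^\omega,\leq)=\mathfrak{d}$, so $\chi(U)<\mathfrak{d}$ rules out $(U,\supseteq)\geq_T(\omega^\omega,\leq)$ and Theorem \ref{thm:main characterization} gives Canjarness. One caveat: your parenthetical claim that $\mathrm{add}(\omega^\omega,\leq)=\mathfrak{d}$ is false --- the everywhere-domination order has countable unbounded families (e.g.\ the constant functions) --- but you never actually use additivity, only cofinality, so this does not harm the argument.

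Part \ref{corollarywithtwoparts2} has a genuine gap at its central step. You claim that if $X\in U$ is a pseudo-intersection of the sets $X_\alpha$, then ``$f(X)\geq^* f(X_\alpha)$ for all $\alpha$ by monotonicity.'' But monotonicity of $f$ --- whether of a plain monotone cofinal map into $(\omega^\omega,\leq)$ or of the canonical $f$ of Definition~\ref{Tukey*} --- respects only \emph{actual} inclusion: $Y\subseteq Z$ implies $f(Y)\geq f(Z)$. A pseudo-intersection only gives $X\setminus n_\alpha\subseteq X_\alpha$ for some $n_\alpha$ depending on $\alpha$, and nothing forces a Tukey map to respect almost-inclusion, so no relation between $f(X)$ and $f(X_\alpha)$ follows. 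The paper closes exactly this gap using regularity of $\mathfrak{b}$: among the $\mathfrak{b}$-many values $n_\alpha$ some fixed $n$ occurs for a subfamily $\mathcal{X}'\in[\mathcal{X}]^{\mathfrak{b}}$, and then $\varphi=f(A\setminus n)$ everywhere-dominates $f(X)$ for all $X\in\mathcal{X}'$ by genuine monotonicity. Since this dominates only a subfamily, the paper also arranges the unbounded family $\langle b_i\mid i<\mathfrak{b}\rangle\subseteq f''U$ to be $\leq^*$-\emph{increasing}, so that dominating unboundedly many $b_i$ forces $\varphi$ to $\leq^*$-dominate all of them, contradicting unboundedness. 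Your sketch contains neither the pigeonhole-on-$n$ step nor the increasingness requirement; moreover, the family must be extracted as a size-$\mathfrak{b}$ subfamily of $U$ whose $f$-images are unbounded (using that $f''U$ is dominating and the definition of $\mathfrak{b}$), since a base of $U$ may well have size greater than $\mathfrak{b}$, so ``$X_\alpha$ ranging over a base'' is not available. The crux you flag (``index arithmetic'') is not the real difficulty; the real difficulty is almost-inclusion versus inclusion.
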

\begin{proof} For both parts we prove the contrapositive. Suppose that $U$ is not Canjar. By \Cref{thm:main characterization}, there is a monotone cofinal map $f:(U,\supseteq)\to(\omega^\omega,\leq)$, and so $\chi(U)\geq \mathfrak{d}$ directly follows and so does Part \ref{corollarywithtwoparts1}. For \ref{corollarywithtwoparts2}, suppose towards a contradiction that $U$ is  a $P_{\mathfrak{b}^+}$-point.  Since  $f''U\subseteq \omega^\omega$ is a dominating family, we can find an unbounded family $\langle b_i\mid i<\mathfrak{b}\rangle\subseteq f''U$ which can moreover assumed to be $\leq^*$-increasing.  Find $\mathcal{X}\subseteq U$ of size $\mathfrak{b}$ such that $f''\mathcal{X}=\mathcal{B}$. By the assumption, there is some $A\in U$ which is almost included in every $X\in\mathcal{X}$, and since $\mathfrak{b}$ is regular uncountable there is $n<\omega$ and $\mathcal{X}'\in [\mathcal{X}]^{\mathfrak{b}}$ such that $A\setminus n\subseteq \bigcap \mathcal{X}'$. It follows that $\varphi=f(A\setminus n)$ will dominate $b_i$ for unboundedly many $i<\mathfrak{b}$. Since the sequence $\langle b_i\mid i<\mathfrak{b}\rangle$ is increasing it follows that $\varphi$ eventually dominates every $b_i$, contradicting the unboundedness assumption regarding the sequence. 
\end{proof}
Recall that for an uncountable cardinal $\kappa$, an ultrafilter $U$ on $\omega$ is called a \textit{$P_{\kappa}$-point} if every $\subseteq^*$-descending sequence of members of $U$ of length $<\kappa$ has a pseudo-intersection in $U$. A $P_{\kappa}$-point $U$ is called \textit{simple} if it is moreover true that $\chi(U)=\kappa$.
In \cite{BM-simplepoints}, Br\"auninger and Mildenberger solved the long-standing open problem of showing the consistency of the statement: ``there is a simple $P_{\aleph_1}$-point $U$ and a simple $P_{\aleph_2}$-point $V$''. In their model, $U$ and $V$ both turned out to be Canjar. The following corollary shows that this is inevitable:
\begin{cor}
If $\chi(U)=\mu<\lambda$ and $V$ is a $P_{\lambda}$-point, where $\mu$ and $\lambda$ are uncountable cardinals, then $U$ and $V$ are both Canjar. In particular, if $U$ is a simple $P_{\mu}$-point and $V$ is a simple $P_{\lambda}$-point, where $\mu\neq\lambda$ are uncountable cardinals, then $U$ and $V$ are both Canjar.
\end{cor}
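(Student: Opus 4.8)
The plan is to derive the single chain of inequalities $\mathfrak{b}\le\mu<\lambda\le\mathfrak{d}$ and then read off both conclusions directly from the two parts of Corollary~\ref{corollarywithtwoparts}. The gap hypothesis $\mu<\lambda$ is exactly what couples the two ultrafilters: it lets a lower bound on $\mu$ feed the $P_\lambda$-point side and an upper bound on $\lambda$ feed the character side. Note that the ``in particular'' clause is immediate once the displayed statement is proved, since a simple $P_\kappa$-point has character $\kappa$ and is a $P_\kappa$-point; thus for $\mu\ne\lambda$ one may relabel so that $\mu<\lambda$ and then take the character-$\mu$ point as $U$ and the $P_\lambda$-point as $V$.

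First I would dispatch $V$. Since the minimal base of any ultrafilter is in particular a reaping family, one has $\mathfrak{b}\le\mathfrak{r}\le\mathfrak{u}\le\chi(U)=\mu$, where $\mathfrak{b}\le\mathfrak{r}$ is the standard fact that a reaping family of size $<\mathfrak{b}$ would have uniformly bounded ``next-element'' functions and so be split by a single interval set. Combined with $\mu<\lambda$ this gives $\mathfrak{b}<\lambda$, i.e.\ $\mathfrak{b}^+\le\lambda$, so the $P_\lambda$-point $V$ is in particular a $P_{\mathfrak{b}^+}$-point. Part~\ref{corollarywithtwoparts2} of Corollary~\ref{corollarywithtwoparts} then yields that $V$ is Canjar.

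Next I would handle $U$, and this is where the main obstacle lies. The plan is to invoke the bound that every $P_\lambda$-point satisfies $\lambda\le\mathfrak{d}$ (equivalently, that there are no $P_{\mathfrak{d}^+}$-points). Granting it, $\mu<\lambda\le\mathfrak{d}$ gives $\chi(U)=\mu<\mathfrak{d}$, so part~\ref{corollarywithtwoparts1} of Corollary~\ref{corollarywithtwoparts} yields that $U$ is Canjar. The strength of this input is genuinely needed: for an arbitrary $U$ the only available sufficient condition is $\chi(U)<\mathfrak{d}$, and nothing links $U$ to $V$ beyond the cardinal inequality, so the argument must exclude $\mu\ge\mathfrak{d}$, which (since $\mu<\lambda$) is precisely the exclusion of a $P_\lambda$-point with $\lambda>\mathfrak{d}$. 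The delicate case in establishing $\lambda\le\mathfrak{d}$ is the non-rapid one, where one cannot thin members of $V$ against an interval partition to manufacture a short decreasing sequence with no pseudo-intersection; I expect this to require the ultrapower formulation, bounding $\mathrm{cf}(\omega^\omega/V)$ from below by $\lambda$ (using the $P_\lambda$-property) and from above by $\mathfrak{d}$ (since any dominating family is cofinal in the ultrapower order), thereby forcing $\lambda\le\mathfrak{d}$.
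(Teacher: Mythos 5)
Your handling of $V$ is correct and is essentially the paper's argument: from $\mathfrak{b}\le\mathfrak{r}\le\mathfrak{u}\le\chi(U)=\mu<\lambda$ you get $\mathfrak{b}^+\le\lambda$, so part~\ref{corollarywithtwoparts2} of Corollary~\ref{corollarywithtwoparts} applies to $V$ (the paper simply quotes $\mathfrak{b}\le\mathfrak{u}$ rather than factoring through $\mathfrak{r}$). The relabelling remark for the ``in particular'' clause is also fine.

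The handling of $U$, however, has a genuine gap. You need $\lambda\le\mathfrak{d}$, and you propose to obtain it from the claim that a $P_\lambda$-point $V$ satisfies $\mathrm{cf}(\omega^\omega/V)\ge\lambda$; but you never prove this claim, and it is not a routine consequence of the $P_\lambda$-property --- the natural attempt fails in exactly the non-rapid case you flag as delicate. Concretely, the only way the pseudo-intersection property can see a family $\{g_\beta\}$ of fewer than $\lambda$ functions is through sets coded from them, e.g.\ the unions $T_\beta$ of alternating intervals of the iteration partition of $g_\beta$. A pseudo-intersection $A\in V$ of the $T_\beta$'s may contain arbitrarily many points inside a single interval, and then no function manufactured from $A$ (next element, second next element, and so on) gets above $g_\beta$ even modulo $V$; conversely, cofinality of $\{g_\beta\}$ in the ultrapower only gives that some $g_\beta$ exceeds the next-element function of $A$ on a $V$-large set, and that set can itself sit inside the even intervals, so one never forces $A$ to meet an odd interval and no contradiction appears. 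Extracting one point per interval is a rapidity/Q-point-type property that a $P_\lambda$-point need not have, so either you must supply an actual proof of $\mathrm{cf}(\omega^\omega/V)\ge\lambda$ (I do not know one, and I am not certain the statement is true), or you must take a different route. The paper's route is the standard one and avoids ultrapowers entirely: if a $P_\lambda$-point exists then $\lambda\le\mathfrak{s}\le\mathfrak{d}$ (cited to Nyikos). Here the point is that $\mathfrak{s}\le\mathfrak{d}$ is witnessed by the interval sets attached to a \emph{dominating} family, and genuine eventual domination $\le^*$ of the next-element function of $A$ by some member $g$ of that family forces $A$ to meet every interval of $g$'s partition from some point on, contradicting $A\subseteq^* T_g$; your ultrapower formulation only ever has domination modulo $V$, which is too weak to run this argument.
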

\begin{proof}
It is well-known that $\omega_1\leq\mathfrak{b}\leq\mathfrak{u}$. Moreover, if there is a $P_{\kappa}$-point, then $\kappa\leq \mathfrak{s}\leq \mathfrak{d}$ (see \cite{Nyikos}). Therefore, if $\chi(U)=\mu$ and $V$ is a $P_{\lambda}$-point for some infinite cardinals $\mu<\lambda$, then $\omega_1\leq \mathfrak{b}\leq\mathfrak{u}\leq \mu<\lambda\leq\mathfrak{d}$. Consequently, by \Cref{corollarywithtwoparts} part \ref{corollarywithtwoparts1}, $U$ has to be Canjar because $\chi(U)<\mathfrak{d}$. Also, by \Cref{corollarywithtwoparts} part \ref{corollarywithtwoparts2}, $V$ has to be Canjar because $V$ is a $P_{\lambda}$-point and $\mathfrak{b}^+\leq\lambda$.
\end{proof}
 Let us turn to our final application. We recall some definitions and theorems from \cite{Hrusak/Verner11}, and refer the reader to this paper for more details.

\begin{defn}[Definition $1.1$ of \cite{Hrusak/Verner11}]
    An ideal $I$ on $\omega$ is called \emph{locally $F_{\sigma}$} if for every $A\in I^+$, there is some $B\subseteq A$ in $I^+$ such that $I\upharpoonright B$ is $F_{\sigma}$.
\end{defn}
\begin{defn}[Definition $1.2$ of \cite{Hrusak/Verner11}]
A function $\mu:\mathcal{P}(\omega)\to \mathbb{R}^{\geq 0}\cup\{\infty\}$ is a lower semicontinuous submeasure (lscsm for short) on $\mathcal{P}(\omega)$ if the following holds:
\begin{enumerate}[label=(\roman*)]
    \item $\mu(\emptyset)=0$.
    \item If $A\subseteq B\subseteq \omega$, then $\mu(A)\leq \mu(B)$.
    \item If $A,B\subseteq \omega$, then $\mu(A\cup B)\leq \mu(A)+\mu(B)$.
    \item If $A\subseteq\omega$, then $\mu(A)=\lim_{n\to\infty}\mu(A\cap n)$.
\end{enumerate}
\end{defn}

Let us recall the following well-known theorem of Mazur \cite{Mazur}:

\begin{thm}
An ideal $I$ on $\omega$ is $F_{\sigma}$ if and only if there is some lscsm $\mu$ on $\mathcal{P}(\omega)$ with $I=\{A\subseteq \omega : \mu(A)<\infty\}$.
\end{thm}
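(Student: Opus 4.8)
The plan is to prove both directions, the construction of the submeasure from an $F_\sigma$ presentation being the substantial half. For the easy direction ($\Leftarrow$), suppose $\mu$ is a lscsm and set $I=\{A:\mu(A)<\infty\}$; this is automatically an ideal by (i)--(iii). I would write $I=\bigcup_{k<\omega}C_k$ with $C_k=\{A:\mu(A)\le k\}$ and show each $C_k$ is closed in $\mathcal{P}(\omega)\cong 2^\omega$. Indeed, if $\mu(A)>k$ then by lower semicontinuity (iv) there is $m$ with $\mu(A\cap m)>k$, and then every $B$ with $B\cap m=A\cap m$ satisfies $\mu(B)\ge\mu(B\cap m)=\mu(A\cap m)>k$ by monotonicity (ii); so the basic clopen neighbourhood of $A$ determined by $A\cap m$ misses $C_k$. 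Hence $C_k$ is closed and $I$ is $F_\sigma$.

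For the hard direction ($\Rightarrow$) I would first normalise the presentation. Writing $I=\bigcup_n F_n$ with $F_n$ closed and increasing, I replace each $F_n$ by its downward closure: since $\mathcal{P}(\omega)$ is compact and $\{(A,B):B\subseteq A\}$ is closed, the downward closure is a continuous image of a compact set, hence closed, and it stays inside $I$ because $I$ is an ideal. Thus I may assume the $F_n$ are closed, downward-closed, increasing, and (prepending $\{\emptyset\}$) that $F_0=\{\emptyset\}$. The key preprocessing step is to arrange \emph{union-control}: set $C_0=\{\emptyset\}$ and $C_{n+1}=\{a\cup b:a,b\in C_n\}\cup F_{n+1}$. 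Each $C_n$ is again closed (the join is a continuous image of $C_n\times C_n$), downward-closed, contained in $I$, and $C_n\supseteq F_n$ forces $\bigcup_n C_n=I$; moreover the union of two members of $C_n$ lies in $C_{n+1}$, so the union of any $\le N$ members of $C_N$ lies in $C_{N+\lceil\log_2 N\rceil}$. I expect this union-control step to be the main obstacle, since it is exactly what lets the submeasure below detect $I$ on the nose.

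Assuming $\fin\subseteq I$ (the general case only forces $\mu$ to take value $\infty$ on some singletons), I would build $\mu$ by a covering construction. On finite sets put $\varphi(a)=\min\{n:a\in C_n\}$, which is monotone because the $C_n$ are downward-closed and exhaust $\fin$, with $\varphi(\emptyset)=0$ and $\varphi(a)\ge 1$ for $a\neq\emptyset$ since $C_0=\{\emptyset\}$. Set $\mu(a)=\min\{\sum_i\varphi(a_i):a\subseteq\bigcup_i a_i,\ a_i\in\fin\}$ and extend to all of $\mathcal{P}(\omega)$ by $\mu(A)=\sup_m\mu(A\cap m)$. This $\mu$ is monotone and subadditive by the usual cover argument, satisfies $\mu(\emptyset)=0$, and is lower semicontinuous because $m\mapsto\mu(A\cap m)$ is nondecreasing, so its supremum equals its limit.

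Finally I would check $\{A:\mu(A)<\infty\}=I$. If $A\in I$, then $A\in C_n$ for some $n$, so $A\cap m\in C_n$ and $\mu(A\cap m)\le\varphi(A\cap m)\le n$ for every $m$, giving $\mu(A)\le n<\infty$. Conversely, if $\mu(A)=N<\infty$, then each $A\cap m$ has a cover by subsets $a_i\subseteq A\cap m$ with $\sum_i\varphi(a_i)\le N$; since each nonempty piece contributes at least $1$, there are at most $N$ nonempty pieces, each in $C_N$, so $A\cap m$ is a union of at most $N$ members of $C_N$ and hence lies in the fixed level $C_{N+\lceil\log_2 N\rceil}$. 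As this holds for all $m$ and that level is closed and downward-closed, $A\in C_{N+\lceil\log_2 N\rceil}\subseteq I$. Thus $\mu$ is the required lower semicontinuous submeasure, completing the proof.
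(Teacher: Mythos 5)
Your proof is correct. The paper does not actually prove this statement---it quotes it as a well-known theorem of Mazur with a citation---so there is no in-paper argument to compare against; what you give is essentially Mazur's classical proof: the easy direction via closedness of the level sets $\{A:\mu(A)\le k\}$ using lower semicontinuity plus monotonicity, and the converse via the hereditary, union-controlled normalization $C_{n+1}=\{a\cup b: a,b\in C_n\}\cup F_{n+1}$ followed by the covering submeasure $\mu(a)=\min\bigl\{\sum_i\varphi(a_i): a\subseteq\bigcup_i a_i\bigr\}$, where the fixed closed level $C_{N+\lceil\log_2 N\rceil}$ absorbs all initial segments of a set of finite submeasure and hence the set itself.
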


Hru\v{s}\'ak and Verner characterized the definable ideals whose corresponding forcing adds a $p$-point in the following way:
\begin{thm}[Theorem $2.5$ of \cite{Hrusak/Verner11}]\label{thm:2.5ofHV}
    Suppose $I$ is an analytic ideal such that the forcing $\mathcal{P}(\omega)/I$ does not add reals. Then $\mathcal{P}(\omega)/I$ adds a $p$-point if and only if $I$ is locally $F_{\sigma}$.
\end{thm}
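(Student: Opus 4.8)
The plan is to prove both implications for the generic ultrafilter $\dot U=\{A\subseteq\omega : [A]_I\in \dot G\}$ added by $\mathcal P(\omega)/I$, whose conditions are the $I$-positive sets ordered by $B\le C\iff B\setminus C\in I$. Throughout I would exploit the hypothesis that no reals are added: any countable family $\langle A_n\rangle\subseteq\dot U$, and any candidate pseudo-intersection, is coded by a real and hence lies in the ground model $\mathbf V$. I would also record the basic forcing fact that $B\Vdash C\in\dot U$ iff $B\setminus C\in I$, so that ``$B$ forces the decreasing family $\langle A_n\rangle$ into $\dot U$'' simply means $B\setminus A_n\in I$ for every $n$.

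For the direction that local $F_\sigma$-ness suffices, I would first pass, by the definition of locally $F_\sigma$, to the dense set of conditions $B$ for which $I\upharpoonright B$ is $F_\sigma$; by Mazur's theorem, fix a lower semicontinuous submeasure $\mu$ on $\mathcal P(B)$ with $I\upharpoonright B=\{X\subseteq B: \mu(X)<\infty\}$. Given a ground-model decreasing family $\langle A_n\rangle$ with $B\setminus A_n\in I$, subadditivity gives $\mu(B\cap A_n)=\infty$ and likewise that each tail $\mu(B\cap A_n\cap[k,\infty))$ is infinite, while lower semicontinuity (property (iv)) lets me pick finite $k_0<k_1<\cdots$ with $\mu\big(B\cap A_j\cap[k_j,k_{j+1})\big)\ge j$. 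Setting $C=\bigcup_j\big(B\cap A_j\cap[k_j,k_{j+1})\big)$, monotonicity applied to each block forces $\mu(C)=\infty$, so $C\in I^+$; and since the $A_j$ decrease, $C\setminus A_n$ is contained in the finite set $B\cap k_n$. Hence $C\le B$ is a condition forcing $C\in\dot U$ with $C\subseteq^* A_n$ for all $n$, i.e. a genuine pseudo-intersection. As such $C$ are dense, $\dot U$ is forced to be a $p$-point.

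For the converse I would argue by contraposition: assuming $I$ is not locally $F_\sigma$, fix $A\in I^+$ with $I\upharpoonright B$ non-$F_\sigma$ for every $I$-positive $B\subseteq A$, and force below $A$ a name for a decreasing family in $\dot U$ with no pseudo-intersection. The combinatorial target is to find, densely below $A$, a positive $B'$ together with an increasing sequence $\langle Z_n\rangle\subseteq I\upharpoonright B'$ that \emph{captures positivity}, in the sense that every $I$-positive $Y\subseteq B'$ has $Y\cap Z_n$ infinite for some $n$. Granting this, I would set $X_n=B'\setminus Z_n$: then $B'\setminus X_n=Z_n\in I$, so $B'$ forces each $X_n\in\dot U$, while any mod-finite pseudo-intersection $Y\subseteq^* X_n$ meets every $Z_n$ finitely and is therefore, by the capturing property, a member of $I$, so every condition forces $Y\notin\dot U$. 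Since no new reals appear, $\dot U$ can then have no pseudo-intersection of $\langle X_n\rangle$, witnessing that $\dot U$ is not a $p$-point. The paradigm to keep in mind is $I=\mathrm{Fin}\times\mathrm{Fin}$ on $\omega\times\omega$, where $Z_n$ is the union of the first $n$ columns and $X_n$ the union of the remaining columns; there a pseudo-intersection would have all columns finite, hence lie in the ideal.

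The main obstacle is precisely the construction of the capturing sequence from the everywhere-failure of $F_\sigma$-ness, and this is where I expect the analytic hypothesis and the no-new-reals assumption to be essential. The underlying principle is that $F_\sigma$-ness of $I\upharpoonright B'$ is exactly the ability to exhaust it by countably many hereditary closed pieces (equivalently, to bound it by a single submeasure), so its failure on every positive set should let me run a rank/fusion recursion that keeps absorbing positive sets into the growing $Z_n$ without exhausting the positivity of $B'$; conceptually this amounts to locating a $\mathrm{Fin}\times\mathrm{Fin}$-like substructure below every positive set, which is the structural content of not being locally $F_\sigma$. I would isolate this as a separate combinatorial lemma, prove it by such a recursion (using analyticity to control the definability of the successive choices and the no-reals assumption to keep the construction inside $\mathbf V$), and then feed it into the forcing argument above through a standard mixing/density argument that assembles the name $\langle \dot X_n\rangle$ from the generically chosen $B'$.
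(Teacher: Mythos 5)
First, a point of reference: the paper you are comparing against does not prove this statement at all. It is quoted as Theorem 2.5 of Hru\v{s}\'ak and Verner \cite{Hrusak/Verner11} and used as a black box in the proof of Theorem \ref{thm.3.9}. So your proposal can only be measured against the original argument and on its own merits.

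Your forward direction (locally $F_\sigma$ implies the generic is a p-point) is correct and is essentially the original argument: pass to the dense set of conditions $B$ with $I\upharpoonright B$ being $F_\sigma$, fix a Mazur submeasure $\mu$ with $I\upharpoonright B=\{X\subseteq B:\mu(X)<\infty\}$, use the no-new-reals hypothesis to replace a name for a decreasing sequence in $\dot U$ by a ground-model sequence $\langle A_n\rangle$ with $B\setminus A_n\in I$, and run the block construction to produce a positive $C\subseteq B$ with $C\subseteq^* A_n$ for all $n$. This is exactly the standard proof that $F_\sigma$ ideals are $P^+$-ideals, localized and then fed through a density argument; no complaints here.

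The converse, however, contains a genuine gap. Your reduction is fine as far as it goes: a ``capturing sequence'' below $B'$ is precisely the assertion that $I\upharpoonright B'$ fails to be a weak $P$-ideal, and (via no new reals) that $B'$ forces $\dot U$ not to be a p-point. But this reduction is only a restatement of what must be proved: the entire content of the hard direction \emph{is} the deferred lemma that ``$I\upharpoonright B$ is non-$F_\sigma$ for every positive $B\subseteq A$'' yields such a pair $(B',\langle Z_n\rangle)$, and your plan for it --- a ``rank/fusion recursion'' that ``keeps absorbing positive sets,'' locating ``a $\mathrm{Fin}\times\mathrm{Fin}$-like substructure'' --- is not a proof. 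No rank is defined, no recursion step is specified, and the claim that failure of local $F_\sigma$-ness is ``the structural content'' of containing $\mathrm{Fin}\times\mathrm{Fin}$-like configurations is exactly the nontrivial assertion at issue. Worse, as a pure statement about analytic ideals (i.e., ignoring the forcing hypothesis) the lemma is not available: its contrapositive says that an analytic ideal which is hereditarily a weak $P$-ideal below $A$ must be $F_\sigma$ on some positive subset of $A$. Since every $P^+$-ideal is hereditarily weak $P$, this would in particular settle (a local form of) the question of whether analytic $P^+$-ideals must be $F_\sigma$, which is a well-known open problem, not something obtainable by an unspecified recursion. The moral is that the hypothesis that $\mathcal{P}(\omega)/I$ adds no reals must enter the combinatorial lemma itself, as a structural constraint on $I$; in the Hru\v{s}\'ak--Verner proof it does so, in combination with Solecki-style structure theory for analytic ideals. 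In your sketch it is used only ``to keep the construction inside $\mathbf V$,'' i.e., to transfer an already-established ground-model statement into the extension --- which is the trivial part and does no work in the construction of $\langle Z_n\rangle$. Until the capturing lemma is proved with a correctly located use of both analyticity and the no-reals hypothesis, the converse direction remains unproven.
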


The following theorem will be used to answer Questions~\ref{Q: HrusakVerner}

\begin{thm}\label{thm.3.9}
     Let $I$ be an analytic ideal on $\omega$ such that $P(\omega)/I$ does not add reals. Then $P(\omega)/I$ adds an ultrafilter $U$ which is Tukey above $(\omega^\omega,\leq)$.
\end{thm}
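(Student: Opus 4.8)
The plan is to construct, in the generic extension, a monotone map from $(U,\supseteq)$ to $(\omega^\omega,\leq)$ whose range is cofinal; since such a map is a Tukey map, this witnesses $U\geq_T(\omega^\omega,\leq)$. I would first dispose of the case in which $I$ is \emph{not} locally $F_\sigma$. Then \Cref{thm:2.5ofHV} tells us that $\mathcal{P}(\omega)/I$ adds no $p$-point, so the generic ultrafilter $U$ is not a $p$-point; as non-$p$-points are automatically Tukey above $(\omega^\omega,\leq)$ (Theorem~4.2 of \cite{TomCommute}), we are done. The substantive case is therefore when $I$ is locally $F_\sigma$.

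Assume $I$ is locally $F_\sigma$. The conditions $[B]$ with $I\restriction B$ being $F_\sigma$ are dense, and since it suffices to verify the conclusion below a dense set of conditions, I would pass below such a $B$ and, identifying $B$ with $\omega$, assume $I$ itself is $F_\sigma$. By Mazur's theorem, fix a lower semicontinuous submeasure $\mu$ with $I=\{A:\mu(A)<\infty\}$. Because $\mathcal{P}(\omega)/I$ adds no reals, every function in $(\omega^\omega)^{\mathbf V[G]}$ already lies in $\mathbf V$, so it is enough to build a monotone $f\colon(U,\supseteq)\to(\omega^\omega,\leq)$ in $\mathbf V[G]$ whose range is cofinal among the functions of $\mathbf V$. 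Every $A\in U$ is $I$-positive, hence $\mu(A)=\infty$, and lower semicontinuity gives $\mu(A)=\sup_n\mu(A\cap n)$; this makes
$$
f(A)(k)=\min\{\,n\in\omega:\mu(A\cap n)\geq k\,\}
$$
a total function. Monotonicity is immediate, since $A\supseteq A'$ forces $\mu(A\cap n)\geq\mu(A'\cap n)$ for all $n$, and hence $f(A)\leq f(A')$.

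Cofinality is where I expect the difficulty to concentrate, and it rests on the following density lemma: for every $g\in\omega^\omega\cap\mathbf V$ and every $C\in I^+$ there is an $I$-positive $A\subseteq C$ with $\mu(A\cap g(k))<k$ for all $k$. Granting this, $D_g=\{[A]:(\forall k)\,\mu(A\cap g(k))<k\}$ is dense, so by genericity $U$ meets $D_g$; any such $A$ satisfies $f(A)(k)\geq g(k)$ for all $k$, and as $g$ ranges over $\omega^\omega\cap\mathbf V=\omega^\omega\cap\mathbf V[G]$ this yields cofinality of $f$. The hard part is the lemma itself, where one must thin a set of infinite $\mu$-mass to an $I$-positive subset whose mass accumulates arbitrarily slowly. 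The two usable levers are lower semicontinuity --- which, via $\mu(C\setminus N)=\infty$ for every $N$, furnishes finite subsets of $C$ of arbitrarily large mass lying arbitrarily far out --- and subadditivity, which bounds the mass of a union of finite blocks by the sum of the block masses. One builds $A$ as a union of finite blocks $t_j\subseteq C$ with $\mu(t_j)\to\infty$, forcing $\mu(A)=\infty$, each block placed so far out that it only begins to contribute to $\mu(A\cap g(k))$ once $k$ is already large enough to absorb it. Reconciling infinite total mass with slow accumulation in the face of the non-additivity of $\mu$ --- essentially the bookkeeping carried out by Hru\v{s}\'ak and Verner \cite{Hrusak/Verner11} --- is the delicate point on which the whole argument turns.
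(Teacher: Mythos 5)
Your Case 1 and the skeleton of Case 2 do match the paper's proof: pass to a condition on which $I$ is $F_\sigma$, fix a Mazur submeasure $\mu$, define a monotone map into $\omega^\omega$, and get cofinality from a density argument together with the fact that no reals are added. The problem is the core of Case 2: your density lemma, and with it the cofinality of $f(A)(k)=\min\{n:\mu(A\cap n)\geq k\}$, is false --- this is not ``bookkeeping left to the reader,'' but an unprovable statement. Mazur's theorem gives you no control over the masses of singletons, and a point $a\in A$ of large finite mass permanently caps $f(A)$: one has $f(A)(k)\leq a+1$ for every $k\leq\mu(\{a\})$. Concretely, let $\mu(A)=\sum_{n\in A}\sqrt{n}$. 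This is a lower semicontinuous submeasure with $\{A:\mu(A)<\infty\}=\fin$, so it is a legitimate output of Mazur's theorem for the ideal $\fin$ (which is $F_\sigma$, and $\mathcal{P}(\omega)/\fin$ is $\sigma$-closed, so this is an instance of your setting). Take $g(k)=2^{2^k}$ and $C=\omega\setminus 100$. Every $I$-positive $A\subseteq C$ is infinite, and for each $a\in A$ we get $f(A)(\lfloor\sqrt{a}\rfloor)\leq a+1<2^{2^{\lfloor\sqrt{a}\rfloor}}=g(\lfloor\sqrt{a}\rfloor)$; since $\lfloor\sqrt{a}\rfloor\to\infty$ along $A$, the function $f(A)$ fails to dominate $g$ even eventually. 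Your lemma itself fails already at the least $k$ with $g(k)>\min(A)$: there $\mu(A\cap g(k))\geq\sqrt{\min(A)}\geq k$. So no placement of blocks ``far out'' can succeed: the blocks are forced to carry mass that is enormous compared to the index $k$ at which $g(k)$ first passes them. (The theorem is of course true in this example --- the generic ultrafilter for $\mathcal{P}(\omega)/\fin$ is Ramsey, hence rapid --- but your map does not witness it.)

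The repair is to measure mass accumulation \emph{relative to the previous value} rather than absolutely, which is exactly what the paper's map does: $\varphi(X)(0)=\min(X)$, and $\varphi(X)(n+1)$ is the least $m>\varphi(X)(n)$ such that $\mu(X\cap(\varphi(X)(n),m])>n+1$. Monotonicity is an induction, and cofinality is proved by a density argument in which one is allowed to \emph{delete} elements of a positive set $X$: put $m_0=\min(X\setminus f(0))$, $m_n'=\max\{m_n,f(n+1)\}$, let $m_{n+1}$ be least with $\mu(X\cap(m_n',m_{n+1}])>n+1$, and set $X'=\{m_0\}\cup\bigcup_{n\in\omega}(X\cap(m_n',m_{n+1}])$. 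Then $\mu(X')=\infty$ and $\varphi(X')(n)=m_n\geq f(n)$ for all $n$: because $X'$ omits $(m_n,m_n']$, its $(n+1)$-st block starts accumulating mass only past $f(n+1)$, so the value $\varphi(X')(n+1)$ is pushed beyond $f(n+1)$ no matter how large the individual masses inside the block are. This is precisely the obstruction your absolute map cannot circumvent: you would need $\mu(X'\cap g(k))<k$, which the example above shows can be unachievable. Two smaller points: your $f$ always has $f(A)(0)=0$ (since $\mu(\emptyset)\geq 0$), so even if the rest worked you would only get cofinality into $(\omega^\omega,\leq^*)$ and should conclude via $U\geq_T(\omega^\omega,\leq^*)\times\omega\equiv_T(\omega^\omega,\leq)$; and a monotone map with cofinal range is a \emph{cofinal} map --- ``Tukey map'' standardly refers to the dual, unbounded kind.
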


\begin{proof}
Suppose first that $I$ is not locally $F_\sigma$. Then by \Cref{thm:2.5ofHV}, the generic ultrafilter $U$ added by $\mathcal{P}(\omega)/I$ is not a $p$-point; thus it is Tukey above $(\omega^\omega, \leq)$ this follows from Theorem~\ref{maintheorem}, but before that in \cite{TomCommute}. 
 
Assume now that $I$ is a locally $F_\sigma$ ideal. Let $U$ be the generic ultrafilter added by $\mathcal{P}(\omega)/I$. By density, there must be some $A^*\in U$ such that $I\restriction A^*$ is $F_\sigma$, and it suffices to show that $(U\restriction A^*, \supseteq)\geq_T (\omega^\omega, \leq)$. Let $\mu$ be an lscsm such that $I\restriction A^*=\{X\subseteq A^* : \mu(X)<\infty\}$. It follows that for every $X\in U\restriction A^*$ and every $n\in\omega$, there is some $m\in\omega$ such that $\mu(X\cap m)\geq n$. We now define a map $\varphi : U\restriction A^*\to \omega^\omega$. For $X\in U\restriction A^*$, we define a function $\varphi(X)\in\omega^\omega$ inductively as follows.
\begin{enumerate}[label=(\roman*)]
    \item $\varphi(X)(0)=\min(X)$.
    \item Suppose that $\varphi(X)(n)$ is already defined for some $n\in\omega$, and let $\varphi(X)(n+1)$ be the minimal natural number $m>\varphi(X)(n)$ such that $\mu(X\cap (\varphi(X)(n),m])>n+1$ (this must exist since $X\in (I\upharpoonright A^*)^+$). 
\end{enumerate}

First, we claim that $\varphi:(U\restriction A^*,\supseteq)\to(\omega^\omega, \leq)$ is monotone. Let $X\subseteq Y$ be arbitrary members of $U\restriction A^*$. To show that $\varphi(X)\geq \varphi(Y)$ we argue by induction. Clearly, we must have $\varphi(X)(0)\geq\varphi(Y)(0)$. Suppose now that we have $\varphi(X)(n)\geq\varphi(Y)(n)$ for some $n\in\omega$. Then for every $\varphi(X)(n)<m\in\omega$, we have $X\cap (\varphi(X)(n),m]\subseteq Y\cap (\varphi(Y)(n),m]$  and therefore, $$\mu(X\cap (\varphi(X)(n),m])\leq \mu(Y\cap (\varphi(Y)(n),m]).$$ It follows that $\varphi(X)(n+1)\geq \varphi(Y)(n+1)$.

To see that $\varphi:(U\restriction A^*, \supseteq)\to (\omega^\omega, \leq)$ is cofinal, let $f:\omega\to\omega$ be an arbitrary function. Let $X\subseteq A^*$ be any member of $(I\restriction A^*)^+$. We aim to show that there is some $X'\subseteq X$ with $X'\in (I\restriction A^*)^+$ such that $\varphi(X')\geq f$; once this is accomplished, $(U, \supseteq)\geq_T (\omega^\omega, \leq)$  follows from density. We first define a strictly increasing sequence of natural numbers $\langle m_n\rangle_{n\in\omega}$ by induction. Set $m_0=\min(X\setminus f(0))$. Now assume that $m_n$ is already defined for some $n\in\omega$. Set $m_n'=\max\{m_n,f(n+1)\}$, and define $m'_n< m_{n+1}\in\omega$ to be the minimal natural number $m$ such that $\mu(X\cap (m_n',m_{n+1}])>n+1$. We define \begin{equation}
X'=\{m_0\}\cup\bigcup_{n\in\omega}\left(X\cap(m_n',m_{n+1}]\right). 
\end{equation} It is straightforward to check that $\mu(X')=\infty$, and consequently $X'\in I^+$. Let us now show that $\varphi(X')\geq f$. By definition of $\langle m_n\rangle_{n\in\omega}$, it suffices to prove that $\varphi(X')(n)=m_n$ for all $n\in\omega$ (since $m_n$ is chosen so that $m_n\geq m_{n-1}'\geq f(n)$ for all $n\in\omega$). This clearly holds for $n=0$, so let us assume that $\varphi(X')(n)=m_n$ holds for some $n\in\omega$, and show that this is indeed the case for $n+1$ as well. By definition, $\varphi(X')(n+1)$ is the minimal natural number $m>\varphi(X')(n)=m_n$ such that $\mu(X'\cap (m_n,m])>n+1$ holds. But if $m\leq m_{n+1}$, then $X'\cap (m_n,m]=X\cap (m'_n,m]$ (as $X'$ does not intersect the interval $(m_n,m_n']$). Thus, $\varphi(X')(n+1)=m_{n+1}$ must be the case by definition of $m_{n+1}$, and we conclude the proof.
\end{proof}
    
By \Cref{thm:main characterization}, this answers the question of Hru\v{s}\'ak and Verner negatively:
\begin{cor}\label{cor: Answer question}
    There cannot be an analytic ideal $I$ such that $P(\omega)/I$ adds a Canjar ultrafilter.
\end{cor}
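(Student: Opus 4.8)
The plan is to read the corollary off from \Cref{thm.3.9} together with the characterization in \Cref{thm:main characterization}, under which an ultrafilter $U$ on $\omega$ satisfies $(U,\supseteq)\geq_T(\omega^\omega,\leq)$ if and only if $U$ fails to be Canjar. I would argue by contradiction: suppose $I$ is an analytic ideal for which $\mathcal{P}(\omega)/I$ adds a Canjar ultrafilter, and let $U$ denote the generic ultrafilter associated with the generic filter $G$ (this is the natural reading of ``adds a Canjar ultrafilter'', consistent with Question~\ref{Q: HrusakVerner} and the Hru\v{s}\'ak--Verner setup).

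First I would reduce to the situation actually covered by \Cref{thm.3.9}, namely that $\mathcal{P}(\omega)/I$ adds no reals. The key observation is that the generic ultrafilter $U=\{A\in V : [A]_I\in G\}$ is generated by ground-model $I$-positive sets, so $U\subseteq V$. If $\mathcal{P}(\omega)/I$ added a new subset $r\subseteq\omega$, then $r\notin V$ and likewise $\omega\setminus r\notin V$, so neither $r$ nor $\omega\setminus r$ would lie in $U$; thus $U$ would fail to decide $r$ and hence fail to be an ultrafilter in $V[G]$. Since we are assuming that a genuine (indeed Canjar) ultrafilter is added, $\mathcal{P}(\omega)/I$ must add no reals.

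Having secured the hypothesis of \Cref{thm.3.9}, I would apply it to conclude that the generic ultrafilter satisfies $(U,\supseteq)\geq_T(\omega^\omega,\leq)$. By the equivalence recorded in \Cref{thm:main characterization}, being Tukey above $(\omega^\omega,\leq)$ is exactly the negation of being Canjar, so $U$ is not Canjar. This contradicts the assumption that $\mathcal{P}(\omega)/I$ adds a Canjar ultrafilter, and the corollary follows.

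I expect the only point requiring care to be the reduction in the second paragraph: one must fix precisely what ``adds a Canjar ultrafilter'' means and verify that the canonical generic object is a genuine ultrafilter in the extension exactly when no reals are added, so that the hypothesis of \Cref{thm.3.9} is automatically met. Once this is in place the argument is a purely formal combination of \Cref{thm.3.9} and \Cref{thm:main characterization}, which is why the statement is a direct corollary rather than requiring any fresh combinatorics.
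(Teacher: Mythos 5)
Your overall route is the paper's own: the corollary is obtained by combining \Cref{thm.3.9} with the equivalence \ref{thm:main characterizatio2}$\Leftrightarrow$\ref{thm:main characterizatio3} of \Cref{thm:main characterization}, so that a generic Canjar ultrafilter would contradict \Cref{thm.3.9}. That combination is exactly how the paper derives the corollary, and it is correct.

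The weak point is precisely the step you single out as ``the only point requiring care.'' Your argument shows only that the literal set $U=\{A\in V:[A]_I\in G\}$ fails to decide a new real $r$; but what the forcing ``adds'' is the filter generated by $U$ in $V[G]$, i.e., its upward closure there, and a new real can perfectly well belong to that generated filter without belonging to $U$ (for instance, any new $r\supseteq A$ with $A\in U$ does, and such new supersets always exist once any new real exists). So from $r\notin V$ and $\omega\setminus r\notin V$ you cannot conclude that the generated filter fails to be an ultrafilter; ultraness of the generated filter only forces every set in $P(\omega)^{V[G]}$ to agree with a ground-model set on some member of $U$, and ruling out that configuration while new reals are added is a genuine task that your one-line argument does not accomplish. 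The paper sidesteps this entirely by reading ``adds a Canjar ultrafilter'' inside the Hru\v{s}\'ak--Verner framework, where not adding reals is the standing hypothesis --- it appears explicitly in \Cref{thm:2.5ofHV} and in Question~\ref{Q: HrusakVerner}, and it is what makes the generic filter generate an ultrafilter in the first place --- so the hypothesis of \Cref{thm.3.9} is available by convention and no reduction is needed. If instead you insist on the stronger reading (the generated filter is ultra while reals may be added), then your proof has a gap at exactly this step, and closing it would require an actual argument about quotients $\mathcal{P}(\omega)/I$, not the observation that $U\subseteq V$.
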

Another corollary gives a partial answers to \cite[Question 5.6]{Benhamou/Dobrinen24}:
\begin{cor}
    If $U$ is a generic ultrafilter added by $\mathcal{P}(\omega)/I$ for a locally $F_\sigma$ ideal $I$, then $U$ is Tukey-idempotent.
\end{cor}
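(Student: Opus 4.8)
The plan is to recognize this corollary as an immediate assembly of the results already in hand, with no new combinatorics required. The guiding principle is the equivalence recorded in Theorem~\ref{thm:main characterization}: for a $p$-point, failing to be Canjar is the same as being Tukey-idempotent. Concretely, a $p$-point is Tukey above $(\omega^\omega,\leq)$ precisely when it satisfies $U\cdot U\equiv_T U$. Both inputs needed to trigger this equivalence—that $U$ is a $p$-point and that $U$ is Tukey above $(\omega^\omega,\leq)$—have already been established, so the remaining work is purely bookkeeping.

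First I would confirm that $U$ is a $p$-point. We remain in the setting of Theorem~\ref{thm.3.9}, so $I$ is analytic and $\mathcal{P}(\omega)/I$ adds no reals; since $I$ is moreover locally $F_\sigma$, Theorem~\ref{thm:2.5ofHV} of Hru\v{s}\'ak and Verner guarantees that the generic ultrafilter $U$ is a $p$-point. Second, Theorem~\ref{thm.3.9} itself delivers $(U,\supseteq)\geq_T(\omega^\omega,\leq)$ (indeed its proof produces a monotone cofinal map from $U\restriction A^*$ directly).

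To conclude, I would feed these two facts into Theorem~\ref{thm:main characterization}. Because $(U,\supseteq)\geq_T(\omega^\omega,\leq)$, condition~\ref{thm:main characterizatio2} fails, and therefore so does the equivalent condition~\ref{thm:main characterizatio1}, namely the conjunction that $U\cdot U>_T U$ and $U$ is a $p$-point. As $U$ is a $p$-point, this conjunction can only fail through the failure of $U\cdot U>_T U$; combined with the always-valid $U\cdot U\geq_T U$ (given by projection onto the first coordinate), this forces $U\cdot U\equiv_T U$, i.e.\ $U$ is Tukey-idempotent.

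I expect no genuine obstacle here, since the substantive arguments live in the preceding theorems and the corollary merely records their combination. The single point warranting attention is making sure the standing hypotheses of Theorem~\ref{thm.3.9}—the analyticity of $I$ and the absence of new reals—remain in force, as it is precisely these that allow Theorem~\ref{thm:2.5ofHV} to certify $U$ as a $p$-point rather than merely as an ultrafilter Tukey above $(\omega^\omega,\leq)$; without $p$-pointness one could only conclude $U\cdot U\geq_T U$, which is trivial, rather than equivalence.
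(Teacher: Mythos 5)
Your proposal is correct and follows essentially the same route the paper intends for this corollary (which it leaves implicit): Theorem~\ref{thm:2.5ofHV} gives that $U$ is a $p$-point, Theorem~\ref{thm.3.9} gives $(U,\supseteq)\geq_T(\omega^\omega,\leq)$, and the equivalence of \ref{thm:main characterizatio1} and \ref{thm:main characterizatio2} in Theorem~\ref{thm:main characterization} (i.e., the Dobrinen--Todorcevic characterization of Tukey-idempotency for $p$-points), together with the trivial $U\cdot U\geq_T U$, yields $U\cdot U\equiv_T U$. Your closing remark that the standing hypotheses of Theorem~\ref{thm.3.9} (analyticity and adding no reals) must remain in force is exactly the right caveat, since the corollary's statement suppresses them but the argument needs them.
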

\subsection{A remark about Canjar ultrafilters on large cardinals}\label{Section:Measurable}
The ideas presented in the previous section relate to a solution to another open problem:
\begin{question}[Brooke-Taylor {\cite[Question 3.6]{QuestionGeneralized}}]
Is there a $\kappa$-complete Canjar ultrafilter on a measurable cardinal $\kappa$?  Do Canjar
ultrafilters have a characterization using $p$-points?  
\end{question}
If the characterization from the previous section works at a measurable cardinal, then the question of finding a Canjar ultrafilter boils down to finding a $p$-point which is not Tukey idempotent. However, no such ultrafilter exists on a measurable by the result of the first two authors \cite[Theorem 6.7]{BENHAMOU_DOBRINEN_2024}. Instead of developing the theory to the measurable cardinals, we will argue directly that no such ultrafilter exists. The first step is to use the following observation:
\begin{thm}[Kanamori, Ketonen]\label{thm: extending the club filter}
Every $\kappa$-complete ultrafilter $U$ over a measurable cardinal $\kappa$ is Rudin-Keisler above an ultrafilter $W$ over $\kappa$ which extends the club filter.     
\end{thm}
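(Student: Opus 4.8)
The plan is to build $W$ explicitly from the ultrapower of $U$, taking the measurable cardinal $\kappa$ itself as a \emph{seed}. Let $j\colon V\to M\cong\mathrm{Ult}(V,U)$ be the ultrapower embedding. Since $U$ is a nonprincipal $\kappa$-complete ultrafilter on $\kappa$, its completeness is exactly $\kappa$ (a $\kappa^+$-complete ultrafilter on a set of size $\kappa$ is principal), so $\mathrm{crit}(j)=\kappa$ and therefore $\kappa<j(\kappa)$. The key point is that $\kappa$ is now an ordinal of $M$ sitting strictly below $j(\kappa)$, and any such ordinal arises as the value $j(A)$ of the image of a set "at" $\kappa$.

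First I would pin down a function representing this seed. Every ordinal of $M$ below $j(\kappa)$ has the form $[g]_U$ for some $g\colon\kappa\to\kappa$ (if $[h]_U<j(\kappa)$ then $h(\alpha)<\kappa$ for $U$-almost all $\alpha$, so $h$ may be replaced by a map into $\kappa$); as $\kappa<j(\kappa)$, fix $g\colon\kappa\to\kappa$ with $[g]_U=\kappa$. Define
$$W=g_*(U)=\{\,A\subseteq\kappa : g^{-1}A\in U\,\}.$$
By \L o\'s's theorem $g^{-1}A\in U \iff [g]_U\in j(A) \iff \kappa\in j(A)$, so equivalently $W=\{A\subseteq\kappa : \kappa\in j(A)\}$, the ultrafilter derived from $j$ by the seed $\kappa$. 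Since $W=g_*(U)$ by construction, $W\le_{RK}U$ (that is, $U\geq_{RK}W$) is immediate; what remains is to check that $W$ is a $\kappa$-complete ultrafilter extending the club filter. Ultrafilter-ness and $\kappa$-completeness are routine from elementarity: for each $A$ exactly one of $\kappa\in j(A)$, $\kappa\in j(\kappa\setminus A)$ holds; and if $\langle A_\xi:\xi<\gamma\rangle$ with $\gamma<\kappa$ lies in $W$, then $j\upharpoonright\gamma=\id$ gives $j\bigl(\bigcap_\xi A_\xi\bigr)=\bigcap_\xi j(A_\xi)\ni\kappa$.

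The heart of the argument is \emph{normality} of $W$. Suppose $f\colon\kappa\to\kappa$ is regressive on some $A\in W$. By elementarity $j(f)$ is regressive on $j(A)$, and $\kappa\in j(A)$, so $\beta:=j(f)(\kappa)<\kappa$. Since $\beta<\kappa=\mathrm{crit}(j)$ we have $j(\beta)=\beta$, and hence $\kappa\in j\bigl(f^{-1}\{\beta\}\bigr)$; that is, $f$ is constant with value $\beta$ on the set $f^{-1}\{\beta\}\in W$. This is exactly normality, and it is precisely here that the choice of the \emph{critical point} $\kappa$ as seed is essential.

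Finally, "extends the club filter" follows from the standard fact that a normal $\kappa$-complete ultrafilter contains every club. I expect this last upgrade to be the only mildly delicate step: normality yields $[\id]_W=\kappa$, and for a club $C\subseteq\kappa$ one has $j_W(C)\cap\kappa=C$ (since $\mathrm{crit}(j_W)=\kappa$), so $\kappa$ is a limit point of $j_W(C)$; as $j_W(C)$ is closed, $\kappa\in j_W(C)$, whence $C\in W$. No serious obstacle remains — the seed construction does all the work, and the verifications are standard elementarity computations.
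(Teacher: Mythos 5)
Your proof is correct, and it is exactly the standard argument that the paper outsources to \cite[Theorem 2.18]{MEASURES} rather than proving itself: derive $W$ from the ultrapower embedding $j=j_U$ using the seed $\kappa$, i.e.\ $W=\{A\subseteq\kappa : \kappa\in j(A)\}=g_*(U)$ for any $g:\kappa\to\kappa$ with $[g]_U=\kappa$, then verify $\kappa$-completeness and normality by elementarity. Two small remarks: your argument actually yields more than the statement asks, namely that $W$ is \emph{normal} (hence in particular club-extending, which is all the paper's application to Mathias forcing on a measurable needs); and the final club-containment step can be done slightly more economically with $j_U$ itself, since $C\in W\iff\kappa\in j_U(C)$ and $\kappa$ is a limit point of the closed set $j_U(C)$ because $j_U(C)\cap\kappa=C$ is unbounded in $\kappa$, avoiding any mention of $j_W$ and $[\id]_W$. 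Your explicit reduction to the nonprincipal case is also the right reading of the statement, since a principal ultrafilter has only principal Rudin--Keisler predecessors and no principal ultrafilter extends the club filter.
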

For the proof, see for example \cite[Theorem 2.18]{MEASURES}. In particular, a $\kappa$-complete ultrafilter is going to be Tukey above the club filter $\text{Cub}_\kappa$. Also, it is well-known that for a regular cardinal $\kappa$, the club filter $\text{Cub}_\kappa$ is Tukey equivalent to $(\kappa^\kappa,\leq^*)$, where $\leq^*$ denoted the  domination order modulo the bounded ideal on $\kappa$. Finally, let us argue directly that the Mathias forcing with a $\kappa$-complete ultrafilter over $\kappa$ adds a dominating function $f:\kappa\to\kappa$.
\begin{thm}
    If $U$ is a $\kappa$-complete ultrafilter over $\kappa$, then $\mathbb{M}_U$ adds a function $f:\kappa\to\kappa$ such that for every $g:\kappa\to\kappa\in V$, $g\leq^* f$. In particular, $U$ is not Canjar.
\end{thm}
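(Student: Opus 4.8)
The plan is to read a dominating function off the Mathias generic real $X_G\subseteq\kappa$ directly, using the two structural features that genericity provides together with the sparseness forced by the club filter. First I would record the basic behaviour of $X_G=\bigcup\{a : \langle a,A\rangle\in G\}$: a routine density argument shows that $X_G$ is unbounded in $\kappa$ (for each $\alpha<\kappa$ the conditions with $\sup a>\alpha$ are dense, since every $A\in U$ is unbounded and we may absorb one of its points into the stem), and that $X_G\subseteq^* A$ for every $A\in U$ in the strong, mod-bounded sense: for each $A\in U$ the set $\{\langle b,B\rangle : B\subseteq A\}$ is dense (replace $B$ by $B\cap A\in U$), so some condition in $G$ forces $X_G\setminus\delta\subseteq A$ for a bounded $\delta<\kappa$. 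These are the only properties of the generic I will use.

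The sparseness of $X_G$ relative to an arbitrary ground-model $g$ is then supplied by the club filter via \Cref{thm: extending the club filter}. Fix $\pi:\kappa\to\kappa$ with $\pi_*(U)=W$, where $W$ extends the club filter. Working in $V[G]$, define $f:\kappa\to\kappa$ by $f(\xi)=\min\{\pi(\gamma) : \gamma\in X_G,\ \gamma>\xi,\ \pi(\gamma)>\xi\}$. This is total: the set $S_\xi=(\xi,\kappa)\cap\pi^{-1}((\xi,\kappa))$ lies in $U$ (both factors do, and $U$ is $\kappa$-complete), so $X_G\cap S_\xi$ is cofinal and the minimum exists. Now let $g\in V$ be arbitrary; replacing $g$ by $\bar g(\xi)=\sup_{\eta\le\xi}(g(\eta)+1)$, which is again a function $\kappa\to\kappa$ by regularity of $\kappa$, I may assume $g$ is increasing. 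Let $C_g=\{\alpha<\kappa : (\forall\beta<\alpha)\,g(\beta)<\alpha\}$ be the club of closure points of $g$; then $C_g\in W$, so $\pi^{-1}(C_g)\in U$ and hence $X_G\setminus\delta\subseteq\pi^{-1}(C_g)$ for some bounded $\delta$. For every $\xi\ge\delta$ the witnessing $\gamma$ in the definition of $f(\xi)$ satisfies $\gamma>\xi\ge\delta$, so $\pi(\gamma)\in C_g$, while $f(\xi)=\pi(\gamma)>\xi$; since $\pi(\gamma)$ is a closure point and $\xi<\pi(\gamma)$, we get $g(\xi)<\pi(\gamma)=f(\xi)$. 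Thus $g<^* f$, and as $g$ was arbitrary, $f$ dominates every ground-model function modulo bounded sets, so $\mathbb{M}_U$ adds a dominating real and $U$ is not Canjar.

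The step I expect to be the main obstacle --- and the only place where $\kappa$-completeness and measurability really enter --- is exactly this passage to the club filter. An arbitrary $\kappa$-complete ultrafilter need not contain any club, and for such $U$ the naive dominating functions (the enumeration of $X_G$, or its ``next element above $\xi$'' map) genuinely fail: when $U$ is normal every $A\in U$ enumerates as the identity modulo $U$, so no single set in $U$ is sparse enough to beat a fast $g$. What rescues the argument is that the closure-point clubs $C_g$ belong to $W$, not to $U$, and the pullback $\pi^{-1}(C_g)$ is a $U$-large set whose $\pi$-image is forced to be $g$-sparse; the care needed is to phrase $f$ through $\pi$ so that it remains total and so that the generic almost-containment $X_G\subseteq^*\pi^{-1}(C_g)$ is precisely what certifies domination. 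An alternative packaging would be to prove Rudin--Keisler preservation of non-Canjarness at $\kappa$ (the $\kappa$-analogue of \Cref{Prop: Canjar}(1)) and then verify the theorem only for $W$, where $C_g\in W$ makes the ``next element of $X_G$ above $\xi$'' function dominate directly; but the $\pi$-pullback above has the advantage of avoiding any projection between $\mathbb{M}_U$ and $\mathbb{M}_W$.
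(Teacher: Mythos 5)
Your proof is correct, and it shares the paper's two key ingredients: the Kanamori--Ketonen theorem (\Cref{thm: extending the club filter}) giving $W\leq_{RK}U$ with $\mathrm{Cub}_\kappa\subseteq W$, and the clubs $C_g$ of closure points to certify domination. Where you genuinely diverge is in how the RK projection is exploited. The paper transfers the generic itself: it invokes (with a citation) the fact that an $\mathbb{M}_U$-generic $G$ induces an $\mathbb{M}_W$-generic $G^*$, and then reads the dominating function off the shifted increasing enumeration $f(\alpha)=\pi_{X_{G^*}}(\alpha+1)$, which lands in $C_g$ because $X_{G^*}$ is almost contained in every member of $W$. You instead never leave $\mathbb{M}_U$: you treat the RK map $\pi$ purely as a set map, pull the clubs back to $U$-large sets $\pi^{-1}(C_g)$, and define $f(\xi)=\min\{\pi(\gamma):\gamma\in X_G,\ \gamma>\xi,\ \pi(\gamma)>\xi\}$, with the extra clause $\pi(\gamma)>\xi$ (justified by $S_\xi\in U$) correctly guarding against $\pi$ collapsing large ordinals to small values. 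Your route is more self-contained --- it needs only the two density facts about $X_G$ and avoids any projection between the two Mathias forcings --- at the price of a slightly more baroque dominating function; the paper's route buys a cleaner $f$ and a shorter argument, but at the cost of the quoted transfer-of-generics fact. One caveat on your closing discussion: the remark that for \emph{normal} $U$ the naive ``next element of $X_G$ above $\xi$'' map ``genuinely fails'' is off --- for normal $U$ one can take $W=U$ and $\pi=\mathrm{id}$, and that naive map is exactly what the paper uses and it does dominate (your observation that each ground-model $A\in U$ enumerates as the identity mod $U$ concerns ground-model sets, not the generic, which is simultaneously thinner than all of them). The correct dividing line, which your proof respects, is whether $U$ extends the club filter; this slip is confined to motivation and does not affect the argument.
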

\begin{proof}
    Let $G$ be $\mathbb{M}_U$-generic. By Theorem~\ref{thm: extending the club filter}, there is a $\kappa$-complete ultrafilter $W$ such that $\text{Cub}_\kappa\subseteq W$ and $W\leq_{RK} U$. It is not hard to see now that $G$ induces a generic for $\mathbb{M}_W$ (e.g. see \cite{MEASURES}). Let $G^*$ be the induced generic and $X_{G^*}$ be the corresponding Mathias generic set. For each $\nu<\kappa$, let $\pi_{X_{G^*}}(\nu)$ be the $\nu^{\text{th}}$ element of $X_{G^*}$ in the increasing enumeration of $X_{G^*}$. Define the function $f(\alpha)=\pi_{X_{G^*}}(\alpha+1)$. We claim that for any $g:\kappa\to\kappa\in V$, $g\leq^* f$. Indeed, let $C_g$ be the club of closure points of $g$. Then $C_g\in W$, which then implies that there is $\xi<\kappa$ such that for for every $\xi<\alpha<\kappa$, $\pi_{X_{G^*}}(\alpha)\in C_g$. Take any $\xi\leq\alpha$. Since $\alpha\leq \pi_{X_{G^*}}(\alpha)<\pi_{X_{G^*}}(\alpha+1)$, and $\pi_{X_{G^*}}(\alpha+1)$ is a closure point of $g$, we obtain $g(\alpha)<\pi_{X_{G^*}}(\alpha+1)=f(\alpha)$.
\end{proof}
\begin{rem}
    Iterated Mathias forcing with Canjar ultrafilters were used in order to obtain models where $\mathfrak{b}<\mathfrak{a}$. On uncountable cardinals, it is open whether $\mathfrak{b}_\kappa<\mathfrak{a}_\kappa$ is consistent at all. Hence, the non-existence of Canjar ultrafilters on measurable cardinals rules out the possible of generalizing the original approach to measurable cardinals.
\end{rem}

\section{Ultrafilters arising from Topological Ramsey Spaces}\label{Section: TRS}

This section  investigates Tukey idempotency of  ultrafilters associated with topological Ramsey spaces. 
While such ultrafilters can be constructed under CH or MA, here we focus on ultrafilters forced by topological Ramsey spaces.
Investigations of ultrafilters forced by TRS's have their  nascence in the connection between Ramsey ultrafilters and the Ellentuck space, and   between stable ordered union ultrafilters and the space of infinite block sequences (see \cite{Mijares07}).
Investigations of ultrafilters forced by TRS's, as well as 
  their Tukey and Rudin-Keisler structures,  were carried out  in the series of papers  \cite{Dobrinen/Todorcevic14,Dobrinen/Todorcevic15,DobrinenJSL15,Dobrinen/Mijares/Trujillo14,DiPrisco/Mijares/Nieto17}.
  The concept of the $I$-p.i.p.\ in \cite{Benhamou/Dobrinen24} (recall Definition \ref{def: i-p.i.p}) was motivated by an understanding of these spaces, as Ramsey spaces have notions of diagonalization  so their associated ultrafilters behave like p-points, even though many of them are technically not p-points. 
Corollary 37 in \cite{Dobrinen/Todorcevic11} proved that  that all rapid p-points are Tukey-idempotent.
This implies that Ramsey ultrafilters are Tukey-idempotent, as well as the hierarchy of weakly Ramsey (and weaker partition relations) ultrafilters forced  Laflamme's forcings $\bP_\al$, $1\le \al<\om_1$, (see \cite{Laflamme89,Dobrinen/Todorcevic14,Dobrinen/Todorcevic15}), as well as ultrafilters forced by the TRS's in \cite{Dobrinen/Mijares/Trujillo14}, are Tukey idempotent.
However, that theorem does not apply to stable ordered union ultrafilters or to ultrafilters forced by $\mathcal{P}(\om^{\al})/\fin^{\otimes\al}$, for $2\le \al<\om_1$, which are also known to be forced by TRS's.
That those ultrafilters are Tukey-idempotent was shown in \cite{Benhamou/Dobrinen24},  as applications of 
Theorem \ref{Tukeyidempotency}.

In this section, we 
 prove that, under mild assumptions,   ultrafilters forced by topological Ramsey spaces
 are Tukey idempotent.  
The mild assumptions are satisfied in all known cases of ultrafilters forced by topological Ramsey spaces which have $\sigma$-closed separative quotients and add ultrafilters on the base set of first approximations.
We  mostly use standard notation for topological Ramsey spaces from Chapter 5 in   \cite{TodorcevicBK10}, a summary of which is  provided here.

\begin{defn}
Let $(\mathcal{R},\le ,r)$ be a triple, where $\le$ is a quasi-order on $\mathcal{R}$ and $r:\om\times\mathcal{R}\rightarrow \mathcal{AR}$ is  a sequence of restriction maps  $r_n= r(n,\cdot)$,  ($n\in\om$),  taking each $A\in\mathcal{R}$ to its $n$-th finite approximation $r_n(A)$.
\begin{enumerate}[label=(\roman*)]
    \item For $A\in\mathcal{R}$,  $[\emptyset,A]$  denotes the set $\{B\leq A : B\in\mathcal{R}\}$.
    \item For $n \in \omega$, $\mathcal{A}\mathcal{R}_n=\{r_n(X) : X \in \mathcal{R}\}$. $\mathcal{A}\mathcal{R}=\bigcup_{n\in\omega} \mathcal{AR}_n$  is the set of all finite approximations.
    \item For $A\in\mathcal{R}$, $\mathcal{AR}\restriction A=\bigcup_{n\in\omega}\mathcal{AR}_n\restriction A$, where $\mathcal{AR}_n\restriction A=\{r_n(B)\mid B\leq A\}$. 
    \item For $A\in\mathcal{R}$ and $a\in\mathcal{AR}$, we define $[a,A]=\{B\in\mathcal{R}\mid  B\leq A\mathrm{\ and \ }\exists n \, (r_n(B)=a)\}$.
    \item For simplicity we let $r_1[A]$ denote $\mathcal{AR}_1\restriction A$, for $A\in\mathcal{R}$.
    \item For $a\in \mathcal{AR}$ and $A\in\mathcal{R}$, we define $$\depth_A(a)=\begin{cases}
        \min(\{n\in\omega : a\leq_{\mathrm{fin}} r_n(A)\}), & \text{if there is such $n\in\omega$,} \\ \infty, & \text{otherwise,}
    \end{cases}$$ where the quasi-order $\leq_{\mathrm{fin}}$ on $\mathcal{AR}$ is the finitization of $\leq$.
\end{enumerate} 
\end{defn}

The sets $[a,A]$ defined in (iv)  are the basic open sets determining the {\em exponential} or {\em Ellentuck topology} on $\mathcal{R}$.
A subset $\mathcal{X}\sse\mathcal{R}$  is {\em Ramsey} if for each nonempty basic open set $[a,A]$, there is some $B\in[a,A]$ so that either $[a,B]\sse\mathcal{X}$ or else $[a,B]\cap\mathcal{X}=\emptyset$.
$\mathcal{X}$ is {\em Ramsey null} if the second case always happens.
A triple
$(\mathcal{R},\le,r)$
 is a {\em topological Ramsey space} iff every  subset of $\mathcal{R}$ with the property of Baire with respect to the  Ellentuck topology is  Ramsey,
 and every meager subset of $\mathcal{R}$ is Ramsey null.
 (See Definitions 5.2 and 5.3, \cite{TodorcevicBK10}.)

The Ellentuck space $([\om]^{\om},\sse, r)$ 
is the prototypical topological Ramsey space \cite{Ellentuck74}. 
Here, for $A\in[\om]^{\om}$, 
$r_n(A)$ is the set consisting of the least $n$ numbers in the set $A$.
It is well-known that  the forcing $([\om]^{\om},\sse^*)$ 
adds a Ramsey ultrafilter.
We point out that  $([\om]^{\om},\sse^*)$ is $\sigma$-closed  and is  forcing equivalent to  $\mathcal{P}(\om)/\fin$, which is the separative quotient of the Ellentuck space $([\om]^{\om},\sse)$ viewed as a forcing.
An analogous situation holds for most topological Ramsey spaces (see \cite{DiPrisco/Mijares/Nieto17} and \cite{DobrinenSEALS17}).
That is, given a  TRS $(\mathcal{R},\le, 
r)$, there is often  
 a coarsening $\le^*$ of $\le$ so that $(\mathcal{R},\le^*)$ is  $\sigma$-closed  and forcing equivalent to $(\mathcal{R},\le)$.
For any topological Ramsey space we may, without loss of generality,  
 assume that there is a greatest member, which we will denote by  $\mathbb{A}$;
 if no greatest member exists, just pick some $\mathbb{A}
 \in \mathcal{R}$ and work in $[\emptyset, \mathbb{A}]$ instead of $\mathcal{R}$.
 In all  TRS's referenced above, there is a  complete dense embedding of $(\mathcal{R},\le)$ 
into a  $\sigma$-closed Boolean algebra
 $\mathcal{P}(\mathcal{AR}_1)/I_{\mathcal{R}}$,
 where $I_{\mathcal{R}}$ is defined by 
$X\sse \mathcal{AR}_1$ is in $I_{\mathcal{R}}$  iff there is no $A\in\mathcal{R}$ such that $r_1[A]\sse X$.
The intended coarsening of $\le$ is via this ideal $I_{\mathcal{R}}$, or any ideal $I$ on $\mathcal{AR}_1$ so that  $\mathcal{P}(\mathcal{AR}_1)/I$ is the separative quotient of $(\mathcal{R},\le)$.

\begin{defn}
    Let $I\subseteq \mathcal{AR}_1$ be a proper ideal with the property  that for each $A\in\mathcal{R}$, $r_1[A]\not\in I$.
    We define the partial order $\leq_I$ on $\mathcal{R}$
    as follows: Given $A,B\in \mathcal{R}$, 
\begin{equation}
    A\leq_I B \text{ if and only if }   r_1[A]\setminus r_1[B]\in I.
\end{equation}
\end{defn}

Note that $\le_I$ is a coarsening of $\le$ on $\mathcal{R}$, as $A\le B$ implies $r_1[A]\sse r_1[B]$.
We will be concerned with  forcings $\mathbb{P}_I=(\mathcal{R},\le_I)$ and the generic filter that it adds
when $\mathbb{P}_I$ is forcing equivalent to $(\mathcal{R},\le)$.

\begin{defn}
    Let $G_I\subseteq \mathcal{R}$  be a $\mathbb{P}_I$-generic filter. 
    Define $U_I=\langle\{
    r_1[A] : A \in G_I\}\rangle$,
    the {\em first approximation filter}  induced by $G_I$ on 
$ \mathcal{P}(\mathcal{AR}_1)$.
\end{defn}

It is immediate that $U_I$ is a filter on the base set $\mathcal{AR}_1$.

\begin{lem}\label{Ipip}
    Suppose $\mathcal{R}$, $I$, and $U_I$ are as above. If $\mathbb{P}_I$ is separative and $\sigma$-closed, then $U_I$   has the $I$-p.i.p.
    Moreover, if $\mathbb{P}_I$ is forcing equivalent to $(\mathcal{R},\le)$, then $U_I$ is an ultrafilter.
\end{lem}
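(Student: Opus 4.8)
The plan is to work inside $V$ with a $\mathbb{P}_I$-name and prove both conclusions by density arguments, using $\sigma$-closure to perform $\omega$-length constructions and separativity to convert forcing statements about $\dot G_I$ into the combinatorial relation $\le_I$. First I would record the translation describing membership in $U_I$: since $G_I$ is $\le_I$-directed and $r_1[A]\notin I$ for every $A$, any $Y\in U_I$ satisfies $r_1[A]\setminus Y\in I$ for some $A\in G_I$ (take a common $\le_I$-lower bound $A$ of the finitely many witnesses to $Y\supseteq r_1[A_1]\cap\dots\cap r_1[A_k]$, and use that a finite union of $I$-sets lies in $I$). Consequently, whenever a condition forces $\dot X\in\dot U_I$ it forces the existence of some $A\in\dot G_I$ with $r_1[A]\setminus\dot X\in I$; strengthening, I may assume a condition $q$ decides such an $A=\check A$, and by \emph{separativity} $q\Vdash\check A\in\dot G_I$ gives $q\le_I A$, so that $q\Vdash r_1[\check q]\setminus\dot X\in I$.

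\textbf{The $I$-p.i.p.} Let $\langle\dot X_n\mid n<\omega\rangle$ name a sequence of members of $\dot U_I$ and fix an arbitrary condition $p$. Using $\sigma$-closure I would build a $\le_I$-decreasing chain $p=q_0\ge_I q_1\ge_I\cdots$ so that, by the translation above, $q_{n+1}\Vdash r_1[\check{q_{n+1}}]\setminus\dot X_n\in I$, and then take a $\le_I$-lower bound $q_\omega$ (which exists by $\sigma$-closure). For each $n$ one has $r_1[q_\omega]\setminus\dot X_n\subseteq(r_1[q_\omega]\setminus r_1[q_{n+1}])\cup(r_1[q_{n+1}]\setminus\dot X_n)$, where the first set is in $I$ because $q_\omega\le_I q_{n+1}$ and the second is forced into $I$ by $q_{n+1}$, hence by $q_\omega$. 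Since any condition forces itself into the generic, $q_\omega\Vdash r_1[\check{q_\omega}]\in\dot U_I$, so $q_\omega$ forces $X:=r_1[q_\omega]$ to be the desired pseudo-intersection. As $p$ was arbitrary, the $I$-p.i.p. holds in $V[G_I]$.

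\textbf{Ultrafilter.} For the second statement I would exploit that a $\sigma$-closed forcing adds no new reals, hence no new subsets of the countable set $\mathcal{AR}_1$: every $Y\subseteq\mathcal{AR}_1$ in $V[G_I]$ already lies in $V$. For such a ground-model $Y$, the abstract Nash--Williams (pigeonhole) property of the topological Ramsey space $(\mathcal{R},\le)$ makes $D_Y=\{A\in\mathcal{R}: r_1[A]\subseteq Y \text{ or } r_1[A]\cap Y=\emptyset\}$ dense in $(\mathcal{R},\le)$, and since $\le$ refines $\le_I$ it is dense in $\mathbb{P}_I$ as well. Forcing equivalence to $(\mathcal{R},\le)$ ensures that $G_I$ meets the dense sets arising from the Ramsey structure of the space, so it meets $D_Y$; choosing $A\in G_I\cap D_Y$, either $r_1[A]\subseteq Y$, whence $Y\in U_I$ by upward closure, or $r_1[A]\subseteq\mathcal{AR}_1\setminus Y$, whence $\mathcal{AR}_1\setminus Y\in U_I$. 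Thus $U_I$ decides every subset of $\mathcal{AR}_1$, so it is an ultrafilter.

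\textbf{Main obstacle.} I expect the delicate part to be the $I$-p.i.p. step, where one must simultaneously (i) convert the semantic assertion $\dot X_n\in\dot U_I$ into the combinatorial inclusion $r_1[q]\setminus\dot X_n\in I$ --- which is precisely where separativity (through $q\Vdash\check A\in\dot G_I\Rightarrow q\le_I A$) and the hypothesis $r_1[A]\notin I$ are used --- and (ii) amalgamate the countably many refinements into a single condition, where $\sigma$-closure is indispensable. By contrast, the ultrafilter part becomes routine once the ``no new reals, hence ground-model $Y$'' observation reduces it to the ground-model pigeonhole for the space.
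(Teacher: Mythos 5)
Your proof is correct and follows essentially the same route as the paper's: a fusion argument combining $\sigma$-closure (to obtain a $\le_I$-lower bound of an $\omega$-chain of conditions) with separativity (to convert $q\Vdash \check{A}\in\dot{G}_I$ into $q\le_I A$), together with the routine no-new-subsets-of-$\mathcal{AR}_1$ plus dense-set argument, via the Ramsey property of $\mathcal{R}$, for the ultrafilter clause. The only organizational difference is that the paper reduces to names for $\le_I$-decreasing sequences of members of $\dot{G}_I$ and decides them along the fusion, whereas you handle arbitrary names for members of $\dot{U}_I$ directly through your explicit translation step.
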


\begin{proof}
Suppose $A\in \mathbb{P}_I$ and there are $\mathbb{P}_I$-names $\dot{C}_n$ such that 
\begin{equation}
A\Vdash \text{``$\langle \dot{C}_n : n\in\omega\rangle$ is a $\leq_I$-decreasing sequence of members of $\dot{G}_I$"}.
\end{equation}
It suffices to show that there is some $B\leq_I A$ and a $\mathbb{P}_I$-name $\dot{C}$ such that $B\Vdash$ ``$\dot{C}\in \dot{G}_I$ and $\dot{C}\leq_I \dot{C}_n$,  for all $n\in\omega$".

Set  $A_0=A$ and define a decreasing sequence $\langle A_n : n \in \omega\rangle$ of members of $\mathcal{R}$, and a sequence $\langle B_n : n\in\omega\rangle$ of members of $\mathcal{R}$ such that for each $n\in\omega$, we have $A_{n+1}\Vdash B_n=\dot{C}_n$. After this, using the $\sigma$-closure of $\mathbb{P}_I$, we find some $A_{\infty}\in \mathbb{P}_I$ such that $A_{\infty}\leq_I A_n$ for all $n\in\omega$. But then $A_{\infty}\Vdash A_{\infty}\in \dot{G}_I$, and also $A_{\infty}\leq_I B_n$ for all $n\in \omega$ (by separativity of $\mathbb{P}_I$), which is what we wanted to show.

If  $\mathbb{P}_I$ is forcing equivalent to $(\mathcal{R},\le)$, then 
  genericity of $G_I$ and  a routine dense set argument, applying the fact that $\mathcal{R}$ is a TRS, shows that $U_I$ is an ultrafilter.
\end{proof}



Our goal is to prove that, under mild hypotheses, $U_I$ is a Tukey idempotent ultrafilter. Let us now introduce one of the ideals we will utilize. In \cite{Mijares07}, Mijares defined the following notion for $a\in \mathcal{AR}$:
\begin{equation}
    \depth^0_{\mathbb{A}}(a)=\max\{n\leq \depth_A(a)\mid \forall b\leq_{\fin} r_n(A), \exists B\in [b,A] ([a,B]\neq\emptyset)\},
\end{equation}
where $\leq_{\fin}$ is a quasi-order  on $\mathcal{AR}$, defined by $b\le_{\fin} a$ iff
$b=r_n(B)$ and $a=r_m(A)$
for some $B\le A$ in $\mathcal{R}$ and $n\le m=\depth_A(b)$.
We define the ideal $I^0_{\mathcal{R}}=\{X\subseteq \mathcal{AR}_1\mid (\exists N\in\omega) \ (\forall a\in X)\ \depth^0_{\mathbb{A}}(a)<N\}$. 
It is straightforward to verify that $I^0_{\mathcal{R}}\equiv_T\omega$.
Mijares showed in \cite{Mijares07} that whenever $(\mathcal{R},\le_{I^0_{\mathcal{R}}})$ is $\sigma$-closed, then this forcing adds an ultrafilter on $\mathcal{AR}_1$.

\begin{defn}\label{defn.star}
The following is our Assumption $(*)$:
\begin{enumerate}[label=(\roman*)]
    \item $\mathbb{P}_I$ and $(\mathcal{R}, \leq)$ have isomorphic separative quotients, and $\leq_I$ is a $\sigma$-closed partial order.\label{maintheoremassmpt1}
    \item 
    $I^0_{\mathcal{R}}\subseteq I$,  $I^\omega\leq_T\omega^\omega$, and there is no $A\in\mathcal{R}$ with $r_1[A]\in I$.\label{maintheoremassmpt2}
    \item Either $I=I^0_{\mathcal{R}}$ 
    and
 the Rudin-Keisler projection via depth map on $U_{I^0_{\mathcal{R}}}$ 
 yields a Ramsey ultrafilter;
    or for all $A\in\mathcal{R}$, there is $B\le A$ such that $I^0_{\mathcal{R}|B}$ is not $\sigma$-closed.\label{maintheoremassmpt3}
    \end{enumerate}
\end{defn}

(i) in Assumption $(*)$
ensures that the forcing $\mathbb{P}_I$ does not add new subsets of $\mathcal{AR}_1$. Consequently, the first approximation filter $U_I$ is an ultrafilter on the base set $\mathcal{AR}_1$. The two alternatives given in \ref{maintheoremassmpt3} should be thought of as a dichotomy,
where the higher and infinite dimensional Ellentuck spaces satisfy the latter part of \ref{maintheoremassmpt3}, and  all other known  TRS's which have $\sigma$-closed separative quotients satisfy the former.
In particular, all TRS's with the ISS or IEP properties, defined in \cite{Dobrinen/NavarroFlores}, satisfy the first clause of (iii) (see Theorem 2.1 \cite{NavarroFloresThesis}, which guarantees the Ramsey ultrafilter RK below $U_I$ under these hypotheses).


We now state our result.

\begin{thm}\label{maintheorem}
Let $\mathcal{R}$ be a topological Ramsey space and  let $I\subseteq \mathcal{AR}_1$ be a proper ideal  so that $(*)$ holds.
Then the generic first approximation filter $U_I$ for $\mathcal{R}$ is a Tukey idempotent ultrafilter.
\end{thm}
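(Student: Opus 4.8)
The plan is to show that $U_I$ satisfies the hypotheses of Theorem~\ref{Tukeyidempotency}, which states that an ultrafilter with the $I$-p.i.p.\ that is Tukey above $I^\omega$ is Tukey-idempotent. Since Assumption $(*)$\ref{maintheoremassmpt1} guarantees $\mathbb{P}_I$ is $\sigma$-closed and separative with the same separative quotient as $(\mathcal{R},\le)$, Lemma~\ref{Ipip} immediately delivers that $U_I$ is an ultrafilter on $\mathcal{AR}_1$ with the $I$-p.i.p.\ (and $I\sse U_I^*$ follows since no $r_1[A]$ lies in $I$). By $(*)$\ref{maintheoremassmpt2}, we have $I^\omega\le_T\omega^\omega$, so to invoke Theorem~\ref{Tukeyidempotency} it suffices to verify the remaining inequality $U_I\ge_T I^\omega$. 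This is the crux of the argument.

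To establish $U_I\ge_T I^\omega$, I would construct a monotone cofinal map $g:(U_I,\supseteq)\to I^\omega=\prod_{n<\omega}I$. The natural candidate uses the depth structure: given $X\in U_I$ of the form $r_1[A]$ (or more generally any set in $U_I$), one reads off, level by level in $n$, the $I$-large ``slices'' that $X$ realizes relative to the fixed top element $\mathbb{A}$, producing a sequence $\langle g(X)_n\mid n<\omega\rangle$ with each $g(X)_n\in I$. Monotonicity should be routine from the definitions. For cofinality one must show that every sequence $\langle J_n\mid n<\omega\rangle\in I^\omega$ is dominated (coordinatewise, under $\sse$) by $g(X)$ for some $X\in U_I$; here the dichotomy in $(*)$\ref{maintheoremassmpt3} enters, since the two cases require slightly different witnesses. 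In the case $I=I^0_{\mathcal{R}}$ with a Ramsey ultrafilter Rudin-Keisler below $U_I$, one can exploit rapidity of that Ramsey projection, much as in the rapid examples following Definition~\ref{Tukey*}, to absorb any prescribed sequence from $I^\omega\equiv_T\omega^\omega$. In the other case (the higher-dimensional Ellentuck spaces), where $I^0_{\mathcal{R}\rst B}$ fails to be $\sigma$-closed below every $B$, this very non-$\sigma$-closure furnishes, by a fusion/genericity argument, sufficiently many independent ``fast-growing'' approximations to dominate an arbitrary member of $I^\omega$.

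The main obstacle I anticipate is precisely the cofinality of $g$ and in particular handling the two clauses of $(*)$\ref{maintheoremassmpt3} uniformly. The depth-based reduction must be compatible with the $\sigma$-closed separative quotient so that membership of $X$ in $U_I$ can be decided by finite approximations $r_n$, which is what makes the level-by-level reading of $g(X)$ well-defined and lets genericity/density arguments (as in Lemma~\ref{Ipip}) control the slices. I expect the verification to reduce, in each case, to the fact that the generic filter $G_I$ meets enough dense sets to realize any target sequence in $I^\omega$; the non-$\sigma$-closure hypothesis in the Ellentuck case is exactly the ingredient that prevents the slices from being bounded and thus forces cofinality. Once $U_I\ge_T I^\omega$ is in hand, Theorem~\ref{Tukeyidempotency} applies verbatim and the conclusion that $U_I$ is Tukey-idempotent follows.
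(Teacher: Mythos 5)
Your overall skeleton matches the paper's: reduce to Theorem~\ref{Tukeyidempotency}, get the $I$-p.i.p.\ from Lemma~\ref{Ipip} via $(*)$\ref{maintheoremassmpt1}, and identify $U_I\ge_T I^\omega$ as the crux, split according to the two clauses of $(*)$\ref{maintheoremassmpt3}. Your Case~1 is essentially the paper's argument: the Ramsey ultrafilter RK-below $U_I$ is rapid, so $U_I\ge_T\omega^\omega$, and since $I=I^0_{\mathcal{R}}\equiv_T\omega$ one gets $I^\omega\equiv_T\omega^\omega\le_T U_I$.

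However, your Case~2 has a genuine gap. The claim that the failure of $\sigma$-closure of $I^0_{\mathcal{R}\rst B}$ ``furnishes, by a fusion/genericity argument, sufficiently many independent fast-growing approximations to dominate an arbitrary member of $I^\omega$'' is not an argument: you never specify the dense sets, and it is unclear how a single generic set $X\in U_I$ could be made to dominate an \emph{arbitrary} prescribed sequence $\langle J_n\rangle\in I^\omega$ by density, since cofinality of your map $g$ quantifies over all such sequences at once, not one dense set at a time. Moreover, your plan for $g$ assumes that membership data of $X\in U_I$ can be read off level-by-level from finite approximations, a p-point-like stabilization property --- but the whole point of Case~2 is that $U_I$ is \emph{not} a p-point, which is exactly what the paper proves and exploits. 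Concretely, the paper's Case~2 runs as follows: using the non-$\sigma$-closure hypothesis and $I^0_{\mathcal{R}}\subseteq I$, a density argument puts into the generic filter $G_I$ a $\le_{I^0_{\mathcal{R}}}$-decreasing sequence $A_0\ge_{I^0_{\mathcal{R}}}A_1\ge_{I^0_{\mathcal{R}}}\cdots$ with no $I^0_{\mathcal{R}}$-pseudo-intersection; setting $X_n=\mathcal{AR}_1\setminus r_1[A_n]$, no $C\in G_I$ has $|r_1[C]\cap X_n|<\aleph_0$ for all $n$, so $U_I$ fails to be a p-point in the classical sense. Then the known theorem that non-p-points satisfy $U\ge_T(\omega^\omega,\le)$ (Theorem~4.2 of \cite{TomCommute}) combined with $I^\omega\le_T\omega^\omega$ from $(*)$\ref{maintheoremassmpt2} gives $U_I\ge_T\omega^\omega\ge_T I^\omega$. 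Note the cleaner use of $(*)$\ref{maintheoremassmpt2}: in both cases one never builds a map into $I^\omega$ directly, but instead proves $U_I\ge_T\omega^\omega$ and composes; your proposal should be restructured around that reduction.
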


This theorem recovers all previously known  examples of Tukey-idempotent ultrafilters forced by topological Ramsey spaces, and additionally shows the following:

\begin{cor}
All Milliken-Taylor ultrafilters forced by $\FIN^{[\infty]}_n$, $1\le n<\om$,
  are Tukey idempotent.
\end{cor}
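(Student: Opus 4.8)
The plan is to realize $\FIN^{[\infty]}_n$ as a topological Ramsey space $\mathcal{R}$ and to produce a proper ideal $I$ on $\mathcal{AR}_1$ satisfying Assumption $(*)$, so that Theorem~\ref{maintheorem} applies verbatim and the Milliken--Taylor ultrafilter is identified with the generic first approximation filter $U_I$. That $\FIN^{[\infty]}_n$ is a topological Ramsey space is classical: it is the Gowers space of infinite block sequences in $\FIN_n$, whose pigeonhole principle is Gowers' $\FIN_n$-theorem and whose Ramsey property is recorded in \cite{TodorcevicBK10}. Its set of first approximations $\mathcal{AR}_1$ consists of single blocks, and the filter generated generically on $\mathcal{AR}_1$ is precisely the (higher-order, stable) Milliken--Taylor ultrafilter, which for $n=1$ is a stable ordered-union ultrafilter.

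First I would take $I=I^0_{\mathcal{R}}$, the Mijares depth-zero ideal, and verify the three clauses of $(*)$ in turn. For \ref{maintheoremassmpt1}, one uses that for these spaces the coarsening $\le_{I^0_{\mathcal{R}}}$ is $\sigma$-closed and that $(\mathcal{R},\le_{I^0_{\mathcal{R}}})$ has the same separative quotient as $(\mathcal{R},\le)$, hence is forcing equivalent to it; this is exactly the situation Mijares analyzes in \cite{Mijares07}. In particular $\mathbb{P}_I$ adds no new subsets of $\mathcal{AR}_1$, so $U_I$ is an ultrafilter by Lemma~\ref{Ipip}. For \ref{maintheoremassmpt2}, since $I=I^0_{\mathcal{R}}\equiv_T\om$ (noted in the text), the standard behavior of Tukey type under countable products gives $I^\om\equiv_T\om^\om\le_T\om^\om$; and by construction no $A\in\mathcal{R}$ satisfies $r_1[A]\in I^0_{\mathcal{R}}$, because every $A$ has first approximations of unbounded $\depth^0_{\mathbb{A}}$.

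The substantive point is clause \ref{maintheoremassmpt3}. Since $\FIN^{[\infty]}_n$ has a $\sigma$-closed separative quotient, it falls under the first alternative, so I would verify that the Rudin--Keisler projection of $U_{I^0_{\mathcal{R}}}$ along the depth map is a Ramsey ultrafilter. Concretely this is the assertion that the relevant type/tetris projection of the Milliken--Taylor ultrafilter is Ramsey: for $n=1$ it is the classical fact that the $\min$- and $\max$-projections of a stable ordered-union ultrafilter are Ramsey, and for general $n$ it follows because $\FIN^{[\infty]}_n$ has the ISS/IEP property of \cite{Dobrinen/NavarroFlores}, whence Theorem~2.1 of \cite{NavarroFloresThesis} produces a Ramsey ultrafilter Rudin--Keisler below $U_I$, realized through the depth map. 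With $(*)$ established, Theorem~\ref{maintheorem} gives at once that $U_I$ is Tukey idempotent.

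I expect the main obstacle to be the identification in clause \ref{maintheoremassmpt3}: one must match the abstract depth map of the topological Ramsey space formalism with the concrete combinatorial projection on $\FIN_n$ that is known to be Ramsey, i.e. confirm that the Rudin--Keisler factor appearing in $(*)$ is genuinely the classically studied Ramsey projection. Once this identification is made, the remaining verifications of $\sigma$-closure, forcing equivalence, and the Tukey estimate $I^\om\le_T\om^\om$ are bookkeeping already subsumed by the framework assembled in the earlier sections.
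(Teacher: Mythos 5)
Your proposal is correct and follows exactly the route the paper intends: the corollary is an instantiation of Theorem~\ref{maintheorem} with $I=I^0_{\mathcal{R}}$, where clauses \ref{maintheoremassmpt1} and \ref{maintheoremassmpt2} of $(*)$ come from Mijares's analysis and $I^0_{\mathcal{R}}\equiv_T\omega$, and the first alternative of clause \ref{maintheoremassmpt3} is supplied by the ISS/IEP property of $\FIN^{[\infty]}_n$ together with Theorem~2.1 of \cite{NavarroFloresThesis} giving the Ramsey ultrafilter RK-below $U_I$ via the depth map. Your flagged ``main obstacle'' (matching the abstract depth projection with the classical min/max-type Ramsey projections, Blass's theorem in the $n=1$ case) is precisely the point the paper delegates to those references, so there is no gap.
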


\begin{rem}
Corollary 1.9 of \cite{Benhamou/Dobrinen24} showed that if $U$ and $V$ are each Tukey-idempotent, then $U\cdot V\equiv_T V\cdot U$.
This together with Theorem  \ref{maintheorem}
shows commutativity of  Fubini products of any ultrafilters forced by reasonable (i.e., satisfying $(*)$) Ramsey spaces, up to Tukey equivalence.   
\end{rem}

We are now ready to prove \Cref{maintheorem}.

\begin{proof}[Proof of \Cref{maintheorem}]
We will utilize \Cref{Tukeyidempotency}. The proof is done in two cases, according to which part of \ref{maintheoremassmpt3} our space satisfies.

\textit{Case 1.} First suppose that $I=I^0_\mathcal{R}$ 
is $\sigma$-closed and 
 the Rudin-Keilser projection via depth map on $U_{I^0_{\mathcal{R}}}$ 
 yields a Ramsey ultrafilter.
 By \Cref{Ipip}, we know that $U_I$ has the $I$-p.i.p. Moreover, since $U_I$ is Rudin-Keisler above a Ramsey ultrafilter, 
 we also know that $U_I\geq_T\omega^\omega$. Since $I^0_{\mathcal{R}}\equiv_T\omega$, we can apply \Cref{Tukeyidempotency} to conclude this case.

\textit{Case 2.} Now suppose that the first two assumptions of $(*)$ hold, and moreover that for all $A\in\mathcal{R}$, there is $B\le A$ such that $I^0_{\mathcal{R}|B}$ is not $\sigma$-closed. Note that by \Cref{Ipip}, we already know that $U_I$ has the $I$-p.i.p. So it suffices to show that $U_I\geq_T I^{\omega}$ holds. A standard density argument and the first part of assumption \ref{maintheoremassmpt2} of $(*)$ yields a  sequence 
$A_0 \ge_{I^0_{\mathcal{R}}}   A_1\ge_{I^0_{\mathcal{R}}}  A_2\ge_{I^0_{\mathcal{R}}}\dots$
 of members of $G_I$, the generic filter forced by $(\mathcal{R},\le_I)$,  which does not have a pseudo-intersection with respect to the ideal $I^0_{\mathcal{R}}$. For $n\in\omega$, define
\begin{equation}
X_n=\{a\in\mathcal{AR}_1 : a\notin r_1[A_n]\}.
\end{equation}
Note that if for some $n\in\omega$ and $B\in\mathcal{R}$ we have $|r_1[B]\cap X_n|<\aleph_0$, then it must be the case that $B\leq_{I^0_{\mathcal{R}}} A_n$. Moreover, there is no $C\in G_I$ such that $r_1[C]\subseteq \bigcap_{n\in\omega} X_n^c$; for otherwise $C\leq_{I^0_{\mathcal{R}}} A_n$ would hold for all $n\in\omega$. Similarly, there is no $C\in G_I$ with $|r_1[C]\cap X_n|<\aleph_0$ for all $n\in\omega$. It follows that $U_I$ is not a $p$-point (in the classical sense). Thus, by
Theorem $4.2$ of \cite{TomCommute}, 
we have $U_I\geq_T \omega^\omega\geq_T I^\omega$, which finishes the proof.
\end{proof}

\section{Further directions \& open problems}\label{Section: Questions} We conclude this paper with three open problems.
\begin{question}
    Is there a notion of $I$-Canjar/$I$-strong-p.i.p.\ that characterizes Tukey-idempotency inside the class of $I$-p.i.p.\ ultrafilters?
\end{question}
\begin{question}
    Is there an ideal $I$ such that $P(\omega)/I$ does not add reals and the generic ultrafilter $U$ is not Tukey-idempotent?
\end{question}
\begin{question}
    Is there an ultrafilter forced by  a topological Ramsey space  not adding reals which is not Tukey-idempotent?
    
\end{question}

\bibliographystyle{plain}
\bibliography{references}

\end{document}